\title[Factorizations of surface diffeomorphisms]{Factorizations of diffeomorphisms of compact surfaces with boundary}
\author{Andy Wand}
\date{}
\newcommand{\pg}{\Sigma}
\newcommand{\mon}{\varphi}
\newcommand{\mcg}{\Gamma_\pg}
\newcommand{\veer}{Veer(\pg)}
\newcommand{\obd}{(\Sigma,\varphi)}
\newcommand{\prey}{y^\alpha}
\newcommand{\preB}{B^\alpha}
\newcommand{\dpoint}{\textbullet -point }
\newcommand{\cpoint}{{\large \textopenbullet}-point }
\newcommand{\dpoints}{\textbullet -points }
\newcommand{\cpoints}{{\large \textopenbullet}-points }
\newcommand{\refy}{\bar{y}}
\newcommand{\refB}{\bar{B}}
\newcommand{\refpg}{\bar{\Sigma}}
\newcommand{\refmon}{\bar{\mon}}
\newcommand{\pos}{\mathcal{P}}
\newcommand{\G}{\mathcal{G}}
\newcommand{\B}{\mathcal{B}}
\newcommand{\R}{\mathcal{R}}
\newcommand{\aclass}{{[\alpha]}}
\newcommand{\w}{\widetilde}
\newcommand{\fac}{\tau_{\alpha_n} \cdots \tau_{\alpha_1}}
\newcommand{\reffac}{\tau^{-1}_{\alpha_n} \cdots \tau^{-1}_{\alpha_1}}
\newcommand{\aequiv}{\stackrel{\alpha}{\sim}}
\newcommand{\bequiv}{\stackrel{b}{\sim}}
\newtheorem{thm}{Theorem}[section]
\newtheorem{cor}[thm]{Corollary}
\newtheorem{lem}[thm]{Lemma}
\theoremstyle{definition}
\newtheorem{defin}[thm]{Definition}
\theoremstyle{remark}
\newtheorem{remark}[thm]{Remark}
\newtheorem{example}[thm]{Example}
\newtheorem{obs}[thm]{Observation}
\begin{document}

\begin{abstract}

We study diffeomorphisms of compact, oriented surfaces, developing
methods of distinguishing those which have positive factorizations
into Dehn twists from those which satisfy the weaker condition of
right veering. We use these to construct open book decompositions of
Stein-fillable 3-manifolds whose monodromies have no positive
factorization.

\end{abstract}

\maketitle

\section{Introduction}

Let $\pg$ be a compact, orientable surface with
nonempty boundary. The (restricted) mapping class group of $\pg$,
denoted $\mcg$, is the group of isotopy classes of orientation
preserving diffeomorphisms of $\pg$ which restrict to the identity
on $\partial\pg$. The goal of this paper is to study the monoid $Dehn^+(\pg) \subset \mcg$ of products of right Dehn twists, and apply this study to the question of the extent to which properties of a contact three-manifold are captured by the data associated to an arbitrary supporting open book decomposition.

In particular, we introduce the idea of a \emph{right position} for the images of a collection of disjoint properly embedded arcs in $\pg$ under a given diffeomorphism, and develop consistency conditions which allow us to prove:

\begin{thm}\label{thm_rp}
Let $\pg$ be a compact surface with boundary, $\mon \in Dehn^+(\pg)$, and $\{\gamma_i\}_{i=1}^n$ a collection of disjoint, properly embedded arcs in $\pg$. Then $\{(\mon,\gamma_i)\}_{i=1}^n$ admit consistent right positions.
\end{thm}

This result can be thought of as a refinement of the \emph{right veering} condition, which dates at least as far back as Thurston's proof of the left orderability of the braid group, and which was introduced into the study of contact structures by Honda, Kazez, and Matic in \cite{hkm}.  In that paper, it is shown that the monoid $\veer \subset \mcg$ of right-veering diffeomorphisms (see Section~\ref{sec_p} for definitions) strictly contains $Dehn^+(\pg)$. Our methods allow one to easily distinguish certain elements of $\veer \setminus Dehn^+(\pg)$.

 We further use Theorem~\ref{thm_rp} to develop necessary conditions for elements of the set of curves which can appear as twists in some positive factorization of a given $\mon \in \mcg$. 
 
\begin{thm}\label{thm_nested}

Let $\gamma_1$ and $\gamma_2$ be disjoint properly embedded arcs in a surface $\pg$, $\mon \in \mcg$, and $\mon(\gamma_1)$ and $\mon(\gamma_2)$ flat with respect to some $D$. Then $\tau_\alpha$ is a Dehn twist in some positive factorization of $\mon$ only if $\alpha$ is nested with respect to $D$.

\end{thm} 
 
 (The non-standard terminology of Theorem~\ref{thm_nested} will be defined in Section~\ref{sec_d}.)

 Finally, as an application of the above methods, we have
 
\begin{thm}\label{thm_stein_but_not_positive}
There exist open book decompositions which support Stein fillable contact structures but whose monodromies cannot be factorized into positive Dehn twists.
\end{thm}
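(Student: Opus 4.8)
The strategy rests on the observation that membership in $Dehn^+(\pg)$ is only one side of an equivalence with Stein fillability. If $\mon=\fac$ is a product of right Dehn twists, then $\obd$ bounds the Lefschetz fibration over $D^2$ with vanishing cycles $\alpha_1,\dots,\alpha_n$, whose total space carries a Stein structure (Loi--Piergallini, Akbulut--Ozbagci), so the supported contact structure is Stein fillable. Conversely, combining the Giroux correspondence with the existence of a positive allowable Lefschetz fibration on every Stein domain, a contact structure is Stein fillable precisely when \emph{some} positive stabilization of one (equivalently any) supporting open book admits a positive factorization. It therefore suffices to exhibit a single open book $\obd$ for which (i) a positive stabilization $\sobd$ has $\mon'\in Dehn^+(\pg')$, so that the structure supported by $\obd$ --- being that supported by $\sobd$ --- is Stein fillable, while (ii) $\mon\notin Dehn^+(\pg)$. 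Since a Stein fillable structure is tight, Honda--Kazez--Mati\'c then forces $\mon$ to be right-veering, so the example lies in the gap $\veer\setminus Dehn^+(\pg)$ that the methods of this paper are designed to probe.

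The technical heart is an obstruction to membership in $Dehn^+(\pg)$ strictly finer than right-veering, since right-veering cannot detect (ii). I would build it by tracking the action of a \emph{hypothetical} positive factorization $\mon=\fac$ on a well-chosen properly embedded arc $\gamma$: each partial product $\facpartial$ must send $\gamma$ to an arc weakly to the right of $\gamma$, and the extent to which a single right twist $\tau_{\alpha_j}$ can displace an arc rightward is tightly constrained --- it changes geometric intersection numbers with a fixed reference multicurve monotonically and by a controlled amount. Accumulating these bounds and comparing with the displacement actually produced by $\mon$ yields a numerical inequality that every positive factorization, of any length and with any twisting curves, must satisfy; when $\mon(\gamma)$ violates it, no positive factorization exists. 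Equivalently, one can frame this as a complexity along factorizations which a positive word can always ``step down,'' arranging that the constructed $\mon$ becomes stuck at a non-trivial configuration.

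For the explicit example I would work on a small surface --- a genus one or genus two surface with a few boundary components is the natural candidate --- and take a short word in Dehn twists built from a standard mapping class group relation (a chain, lantern, or star relation) arranged so that a single negative twist $\tau_\beta^{-1}$ appears yet can be absorbed into a positive factorization only after enlarging the surface by the stabilization handle. Verifying (i) then reduces to a concrete computation: exhibit the positive factorization of $\mon'$ on $\pg'$ directly, or, alternatively, bypass stabilization and recognize the contact structure supported by $\obd$ --- via a $3$-manifold identification --- as one already known to be Stein fillable (for instance Milnor fillable, or obtained from $(S^3,\xi_{\mathrm{std}})$ by Legendrian surgery). Verifying (ii) amounts to running the arc obstruction of the previous paragraph on $\obd$, a finite computation with intersection numbers.

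The principal obstacle is establishing (ii) \emph{uniformly}. Verifying that a specific map is right-veering, or that a particular factorization fails to be positive, is routine; excluding \emph{every} positive factorization demands an invariant of $\mon$ itself, independent of any factorization, and sharp enough to separate the constructed $\mon$ from the whole of $Dehn^+(\pg)$. Calibrating the combinatorial model of ``how far right a right twist can move an arc'' to be at once this sharp and simple enough to evaluate by hand on a genuinely small example --- small enough that (i) remains checkable --- is the delicate balance on which the argument turns.
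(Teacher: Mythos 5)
Your global strategy coincides with the paper's: build an open book whose monodromy contains a single negative twist coming from a lantern-type relation, show it is stabilization-equivalent to a positively factored open book (so the supported structure is Stein fillable -- the paper's example is $\tau^{-1}_\alpha\tau_{\delta_1}\tau_{\delta_2}\tau_{\beta_1}\tau_{\beta_2}$ on $\pg_{2,1}$, stabilization-equivalent to a positive open book on $\pg_{2,2}$ and supporting $(\#^2(S^1\times S^2),\xi_{st})$), and then prove that the unstabilized monodromy admits no positive factorization. Part (i) of your plan is essentially the paper's ``immersed lantern'' construction and is fine in outline.

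The genuine gap is in (ii), which you correctly identify as the heart of the matter but for which your proposed obstruction would not work. You suggest a numerical invariant: bound how much a single positive twist can displace an arc rightward, measured by intersection numbers with a fixed reference multicurve, and accumulate these bounds into an inequality that every positive factorization must satisfy. But positive factorizations have unbounded length and arbitrary twisting curves, so per-twist displacement bounds cannot accumulate into any length-independent constraint; there is no inequality of the kind you describe that $\mon(\gamma)$ could violate while $\mon$ is right veering. Indeed the paper's examples are right veering with every boundary component ``protected,'' so no condition that examines one arc image at a time (or compares it numerically to a reference configuration) can separate them from $Dehn^+(\pg)$. The missing idea is that the obstruction must be a \emph{structural condition on pairs (or collections) of disjoint arcs}: the paper shows (Theorem \ref{thm_rp}) that a positive factorization forces the images of any collection of disjoint properly embedded arcs to admit \emph{consistent right positions} -- every pair of initially parallel horizontal segments must be completed to a ``singular annulus.'' This is a qualitative pairing condition, proved by an induction over the factorization that constructs the right position associated to the word, not a displacement estimate. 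From it the paper extracts restrictions on which curves can appear in \emph{any} positive factorization (type 1, nested, balanced on a region $D$ spanned by two disjoint arcs, Lemma \ref{lem_type_1_condition} and Theorem \ref{thm_nested}), concludes that every curve in a hypothetical positive factorization of $\mon'=\tau_\alpha\circ\mon$ is disjoint from $\alpha$ (Lemma \ref{lem_no_alpha}), and then rules out $\alpha\in p.e.(\mon')$ by the step-down argument of Theorem \ref{thm_example}. None of this is recoverable from the one-arc, intersection-counting scheme you sketch, and your own final paragraph concedes that the calibration of such a scheme is unresolved; as it stands the proposal does not contain a proof of non-positivity.
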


Central to much of current research in contact geometry is the relation between the
monodromy of an open book decomposition and geometric properties of the contact structure, such as tightness and fillability. The starting point is the remarkable theorem of Giroux \cite{gi}, demonstrating a one-to-one correspondence between oriented
contact structures (up to isotopy) on a 3-manifold $M$ and open book decompositions
of $M$ up to positive stabilization. It has been shown by Giroux \cite{gi}, Loi and Peirgallini \cite{lp}, and Akbulut and Ozbagci \cite{ao}, that any open book
with monodromy which can be factorized into positive Dehn twists
supports a Stein-fillable contact structure, that every Stein-fillable contact structure is supported by \emph{some} open book with monodromy which can be factored into positive twists, and in \cite{hkm} that a contact structure is tight if and only if the monodromy of each supporting open book is
right-veering. The question of whether \emph{each} open book which supports a Stein-fillable contact structure must be positive is then answered in the negative by our Theorem~\ref{thm_stein_but_not_positive}.

Section~\ref{sec_p} introduces some conventions and definitions, and
provides motivation for what is to come. Section~\ref{sec_rp}
is devoted to the terminology and proof of Theorem~\ref{thm_rp}. We also construct simple examples of open books whose monodromies, though right veering, have no positive factorization.

In Section~\ref{sec_d} we use these results to address the question of under what conditions various isotopy classes of curves on a surface can be shown not to appear as Dehn twists in any positive factorization of a given monodromy. We obtain various necessary conditions under certain assumptions on the monodromy, culminating in Theorem~\ref{thm_nested}.

Finally, in Section~\ref{sec_e}, we construct explicit examples of open book decompositions for an infinite family of 3-manifolds which support Stein fillable contact structures yet whose monodromies have no positive factorizations. For the construction we demonstrate a method of modifying a certain mapping class group relation (the lantern relation) into an `immersed' configuration, which we then use to modify certain open books with positively factored monodromies (which therefore support Stein fillable contact manifolds) into stabilization-equivalent open books (which support the same contact structures) whose monodromies now have non-trivial negative twisting. We then apply the methods developed in the previous sections to show that in fact this negative twisting is essential.

As this paper was being written, Baker, Etnyre, and Van Horn-Morris
\cite{bev} were able to construct similar examples of
non-positive open books supporting Stein fillable contact structures, all of which use the same surface,
$\pg_{2,1}$, involved in our construction. Their method for demonstrating non-positivity is on the one hand substantially shorter, but on the other is quite restrictive, being entirely specific to this surface, and also requiring complete knowledge of the Stein fillings of the contact manifold in question, which is known only for a small class of contact manifolds.

We would like to thank Danny Calegari for many helpful comments on a previous version of this paper, Rob Kirby for his support and encouragement throughout the much extended period over which it was completed, and the referee for many helpful suggestions and comments. We would also like to acknowledge the support and hospitality of the Max Planck Institute for Mathematics.

\section{Preliminaries}\label{sec_p}

Throughout, $\pg$ denotes a compact, orientable surface with
nonempty boundary. The (restricted) mapping class group of $\pg$,
denoted $\mcg$, is the group of isotopy classes of orientation
preserving diffeomorphisms of $\pg$ which restrict to the identity
on $\partial \pg$. In general we will not distinguish between a
diffeomorphism and its isotopy class.

Let $SCC(\pg)$ be the set of simple closed curves
on $\pg$. Given $\alpha \in SCC(\pg)$ we may define a self-diffeomorphism $D_\alpha$ of $\pg$ which is supported near $\alpha$ as follows. Let a neighborhood $N$ of $\alpha$ be identified by oriented coordinate charts with the annulus $\{a \in \mathbb{C} \ | \ 1 \leq ||a|| \leq 2 \}$. Then $D_\alpha$ is the map $a \mapsto e^{-i2\pi (||a|| -1)}a$ on $N$, and the identity on $\pg \setminus N$. We call $D_\alpha$ the \emph{positive Dehn
twist} about $\alpha$. The inverse
operation, $D^{-1}_\alpha$, is a \emph{negative} Dehn twist. We denote the isotopy class of $D_\alpha$ by $\tau_\alpha$. It
can easily be seen that $\tau_\alpha$ depends
only on the isotopy class of $\alpha$.

We call $\mon \in \mcg$ \emph{positive} if it can be factored as a
product of positive Dehn twists, and denote the monoid of such mapping classes as $Dehn^+(\pg)$. 

An open book decomposition $\obd$, where $\mon \in \mcg$, for a
3-manifold $M$, with binding $K$, is a homeomorphism between $((\pg
\times[0,1] )/ \sim_\mon, (\partial\pg \times[0,1] )/ \sim_\mon)$
and $(M,K)$. The equivalence relation $\sim_\mon$ is generated by
$(x, 0) \sim_\mon (\mon(x), 1)$ for $x \in \pg$ and $(y, t)
\sim_\mon (y, t')$ for $y \in \partial\pg$.

\begin{defin}\label{def_pe}
 For a surface $\pg$ and positive mapping class $\varphi$, we define the \emph{positive extension} $p.e.(\varphi) \subset SCC(\pg)$ as the set of all $\alpha \in SCC(\pg)$ such that $\tau_\alpha$ appears in some positive factorization of $\varphi$.
\end{defin}

A recurring theme of this paper is to use properly embedded arcs
$\gamma_i \hookrightarrow \pg$ to understand restrictions on
$p.e.(\mon)$ which can be derived from the
monodromy images $\mon(\gamma_i)$ in $\pg$ (relative to the arcs themselves).
Our general method is as follows: Suppose that $P$ is some property
of pairs $(\mon(\gamma),\gamma)$ (which we abbreviate by referring
to $P$ as a property of the image $\mon(\gamma)$) which holds for
the case $\mon$ is the identity, and is preserved by positive Dehn
twists. Suppose then that $\alpha \in p.e.(\mon)$; then there is a positive factorization
of $\mon$ in which $\tau_{\alpha}$ is a twist. Using the well known
braid relation, we can assume $\tau_{\alpha}$ is the \emph{final}
twist, and so the monodromy given by $\tau^{-1}_{\alpha} \circ \mon$
is also positive. It follows that $P$ holds for $(\tau^{-1}_{\alpha} \circ \mon)(\gamma)$
as well.

\begin{example}\label{example_arcs}
As a motivating example, consider a pair of arcs $\gamma,\gamma':[0,1]
\hookrightarrow \pg$ which share an endpoint $\gamma(0) = \gamma'(0)
= x \in \partial \pg$, isotoped to minimize intersection. Following
\cite{hkm}, we say $\gamma'$ is `to the right' of $\gamma$, denoted
$\gamma' \geq \gamma$, if either the pair is isotopic, or if the
tangent vectors $(\dot{\gamma'}(0),\dot{\gamma}(0))$ define the
orientation of $\pg$ at $x$. The property of being `to the right' of
$\gamma$ (at $x$) is then a property of images $\mon(\gamma)$ which
satisfies the conditions of the previous paragraph. We conclude that, if $\mon(\gamma)$ is to the right of $\gamma$, then
$\alpha \in p.e.(\mon)$ only if $(\tau^{-1}_{\alpha} \circ
\mon)(\gamma)$ is to the right of $\gamma$.
\end{example}

The main motivation for the use of properly embedded arcs to characterize properties of mapping classes is the well-known result that a mapping class of a surface with boundary is completely determined by the images of a set of properly embedded disjoint arcs whose complement is a disc (an application of the `Alexander method', see e.g.~\cite{fm}). The most well-known example of such a property is what is now generally known as \emph{right veering}:

\begin{defin}\label{def_right_veering}\cite{hkm}
Let $\mon$ be a mapping class in $\mcg$, $\gamma \hookrightarrow \pg$ a properly embedded arc with endpoint $x \in \partial \pg$. Then $\mon$ is \emph{right veering} if, for each such $\gamma$ and $x$, the image $\mon(\gamma)$ is to the right of $\gamma$ at $x$.
\end{defin}

We denote the set of isotopy classes of right veering diffeomorphisms as $\veer \subset \mcg$.

The right veering property can perhaps best be thought of as a necessary condition of positivity, and (by~\cite{hkm}), using the Giroux correspondence, also a necessary condition for tightness of the supported contact structure. It was however shown in that paper that right veering is far from a sufficient condition; indeed, that \emph{any} open book decomposition may be stabilized to a right veering one. 

The failure of right veering to capture either tightness or positivity is at least partially due to two obvious shortcomings. Firstly, we observe that right veering `only sees one arc at a time'. Referring back to the original motivation of the Alexander method, we would rather a property which can see arbitrary collections of (disjoint) arcs. Secondly, right veering is `localized' to the boundary; in particular negative twisting in the interior of a surface may be hidden by positivity nearer to the boundary. The goal of Section~\ref{sec_rp} is to introduce a substantial refinement of right veering which takes into account each of these issues.

\section{Right position}\label{sec_rp}

This section is devoted to the proof of Theorem \ref{thm_rp}. We will begin with an introduction to the rather non-standard terminology, and then turn to the various ideas involved in the proof.

\subsection{Definitions and examples}

\begin{defin}\label{rp}
Suppose  $\gamma \hookrightarrow \pg$ is a properly embedded arc with
boundary $ \{c,c'\}$, and $\mon \in \veer$, with
$\mon(\gamma)$ isotoped to minimize $\gamma \cap \mon(\gamma)$. Let $A$ be a subset of the positively oriented interior intersections in $\gamma \cap \mon(\gamma)$ (throughout the paper, each properly embedded arc $\gamma$ and its image $\mon(\gamma)$ will be given opposite orientations, while a point $p \in \gamma \cap \mon(\gamma)$ will be considered \emph{positive} if the tangent vectors of $\gamma$ and $\mon(\gamma)$ in that order define the orientation of $\pg$ at $p$, negative otherwise - sign conventions are illustrated in Figure \ref{fig_signs}). Then the set $\{c,c'\} \cup A$ is a \emph{right position} $\pos = \pos(\mon,\gamma)$ for the pair $(\mon,\gamma)$.  If $\{\gamma_i\}_{i=1}^n$ is a collection of disjoint properly embedded arcs, and $\pos_i$ a right position for each pair $(\mon,\gamma_i)$, then we refer to  $\bigcup_{i=1}^n  \pos_i$ as a right position for the pair $(\mon,\{\gamma_i\}_{i=1}^n)$.
\end{defin}

Note that we are thinking of the set $I$ of positively oriented interior intersections in $\gamma \cap \mon(\gamma)$ as depending only on the data of $\gamma$ and the isotopy class $\mon$, and thus independent of isotopy of $\gamma$ and $\mon(\gamma)$ as long as intersection minimality holds. In particular, the integer $|I|$ clearly depends only on this data. We may label elements of $I$ with indices $1,\ldots, |I|$ increasing along $\gamma$ from distinguished endpoint $c$. Right positions for the pair $(\mon, \gamma)$ are thus in 1-1 correspondence with subsets of the set of these indices.

\begin{figure}[h]
\centering \scalebox{1.5}{\includegraphics{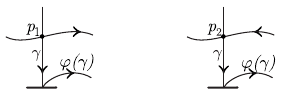}} \caption[Sign conventions]{$p_1$
is a positive, $p_2$ a negative, point of $\gamma \cap
\mon(\gamma)$} \label{fig_signs}
\end{figure}

Associated to a right position is the set $H(\pos) := \{[v,v']
\subset \mon(\gamma) \ | \ v,v' \in \pos\}$. We denote these
segments by $h_{v,v'}$, or, if only a single endpoint is required,
simply use $h_v$ to denote a subarc starting from $v$ and
extending as long as the context requires. In this case direction
of extension along $\mon(\gamma)$ will be clear from context.

Right position can thus be thought of as a way of decomposing the image $\mon(\gamma)$ into `horizontal' segments $H(\pos)$, separated by the points
$\pos$ (Figure~\ref{fig_rp_example_1}). Note that for any right-veering $\mon$, and any properly embedded arc $\gamma$, $(\mon, \gamma)$
has a trivial right position consisting of the points $\partial(\gamma)$, and so there is a single
horizontal segment $h_{c,c'} = \mon(\gamma)$. Of course if $I$ is nonempty, there are $2^{|I|} - 1$
non-trivial right positions. Intuitively, the points in $\pos$ play a
role analogous to the boundary points of the arc, allowing
us to localize the global `right'ness of an arc image though the
decomposition into horizontal segments.

\begin{figure}[h!]
\centering \scalebox{.9}{\includegraphics{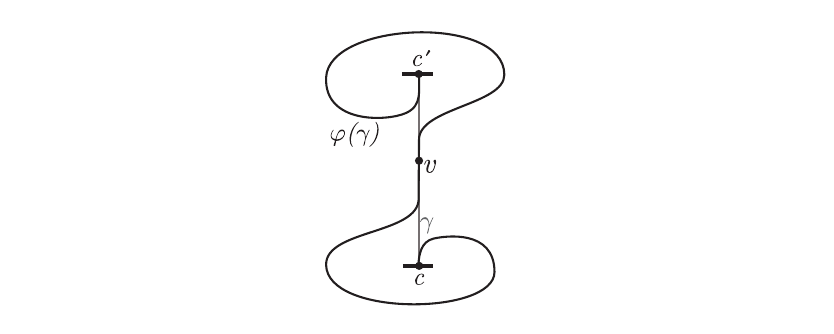}}
\caption[Right position example]{$\pos = \{c,v,c'\}$ is a right position of $(\mon, \gamma)$. There are three distinct horizontal segments: $h_{c,v}, h_{v,c'}$, and $h_{c,c'}$}
\label{fig_rp_example_1}
\end{figure}

We are interested in using right position to compare \emph{pairs} of
arcs; the idea being that, if $\mon$ is positive, we expect to be
able to view any pair of horizontal segments as either unrelated, or
as belonging to the `same' horizontal segment.

To get started, we need to be able to compare horizontal segments.
To that end, we have:

\begin{defin}\label{def_rectangular_region} Suppose $\{\gamma_i\}_{i=1}^n$ are properly embedded arcs in a surface $\pg$, and $\mon \in \veer$. A \emph{rectangular region} of $(\pg,\mon, \{\gamma_i\})$ is the image of an immersion of the standard disc $[0,1] \times [0,1]$, such that each edge is embedded into one of the $\gamma_i$, and vertices map to intersections $\gamma_i \cap \gamma_j$.
\end{defin}

\begin{defin}\label{def_initially_parallel}
Let $\{\gamma_i\}_{i=1}^n$ be a collection of disjoint, properly embedded arcs in $\pg$, $\mon \in \veer$, and $\pos_i = \pos_i(\mon,\gamma_i)$ a right position for each $i$. Associated to this data is then a collection $\R(\pos_1,\ldots,\pos_n)$ of rectangular regions of $(\pg,\mon, \{\gamma_i\} \cup \{\mon(\gamma_i)\})$ such that for each region (1) there exist orientations of the $\gamma_i$ which induce an orientation of the boundary of the region, and (2) exactly two vertices lie in $\cup\pos_i$ (see Figure \ref{fig_regions}). We refer to this collection as the \emph{initially parallel} regions associated to $\{\pos_i\}$.
\end{defin}

 Given an initially parallel region, we refer to the pair of vertices in $\cup\pos_i$ as \emph{\dpoints}, \emph{\cpoints} otherwise (so \cpoints and \dpoints alternate along the boundary of a region). Throughout the paper, we will draw \dpoints as solid dots, \cpoints as circles.

\begin{defin}\label{def_orientations}
Let $B_1,B_2 \in \R(\pos_1,\pos_2)$ (as in the previous definition), and orient $\gamma_1$ and $\gamma_2$ such that the induced orientation of $\partial(B_1)$ agrees with the standard (counterclockwise) orientation of the boundary of a region in an oriented surface. If these orientations then induce the standard orientation of $\partial(B_2)$, we say that the pair have the same orientation, while if the orientations induce the opposite orientation on $\partial(B_2)$, we say the regions have the opposite orientations.
\end{defin}

\begin{figure}[h!]\label{fig_regions}
\centering \scalebox{2}{\includegraphics{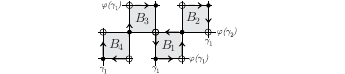}}
\caption{Various initially parallel regions in a collection $\R(\pos_1,\pos_2)$, where we have given the arcs the orientations which induce the standard orientation on $B_1$. Region $B_1$ has the opposite orientation of $B_2$ and $B_3$ (so $B_2$ and $B_3$ have the same orientation), while the given orientations do not orient $B_4$.}
\label{fig_regions}
\end{figure}

\begin{remark}
It is helpful to think of a rectangular region as the image under the covering map $\rho :\widetilde{\pg} \rightarrow \pg$ of an \emph{embedded} disc in the universal cover. As such, we will draw regions as embedded discs whenever doing so does not result in any loss of essential information.
\end{remark}

\begin{defin}\label{def_consistent}
Let $\{\gamma_i\}_{i=1}^n, n \geq 1,$ be a collection of
non-intersecting, properly embedded arcs in $\pg$, $\mon \in \veer$, and $\pos_i = \pos_i(\mon,\gamma_i)$ a right position
for each $i$. We say the $\pos_i$ are
\emph{consistent} if for each $B \in \R(\pos_1,\ldots,\pos_n)$, there is $B' \in \R(\pos_1,\ldots,\pos_n)$ with the same \cpoints, and opposite orientation, as $B$.  We refer to $B'$ as a \emph{completing region} for $B$. 
\end{defin}

\begin{example}\label{ex_rp}
The pair $(\mon,\gamma_1)$, $(\mon,\gamma_2)$ in Figure
\ref{fig_rp_example}(a), with the right positions indicated, is consistent, with two completed pairs of regions, as indicated in (b). It is straightforward to verify that there are no other regions in $\R(\pos_1,\pos_2)$.

\begin{figure}[h!]
\centering \scalebox{.8}{\includegraphics{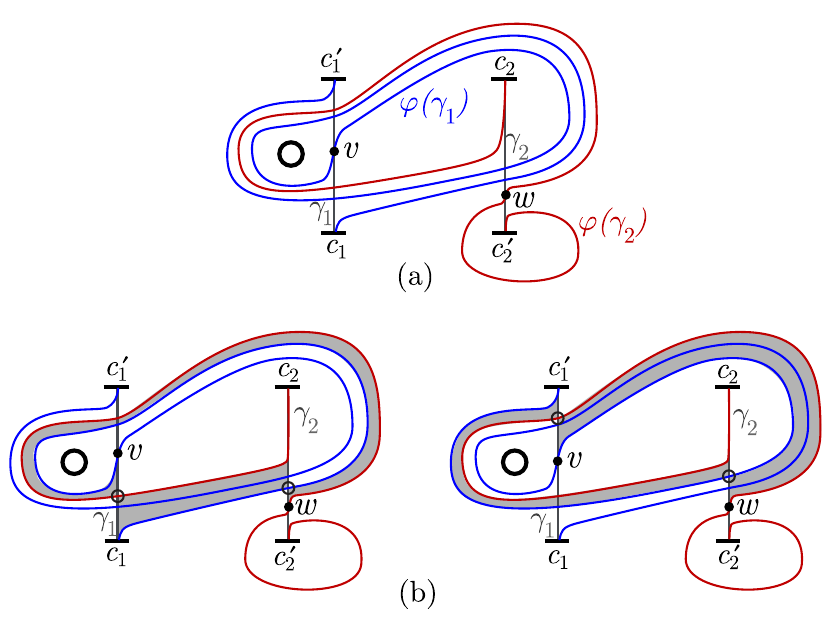}}
\caption[Right position examples]{(a): The pair $(\mon,\gamma_1)$, $(\mon,\gamma_2)$ admits consistent right positions. The pairs of completing discs are shaded in (b).} \label{fig_rp_example}
\end{figure}

The pair in Figure \ref{fig_rp_example_not}, however, admits no consistent right positions: Suppose
$\pos_1$ and $\pos_2$ are consistent right positions. Then $h_{c_1}$ and $h_{c_2}$ are edges of an initially parallel region $B$, so there must be $v \in \pos_1$ and $w \in
\pos_2$ satisfying the compatibility conditions. The only candidates are $c'_1$ and $c'_2$, and these do
not give a completing region. By Theorem
\ref{thm_rp}, this is sufficient to conclude that the mapping class has no
positive factorization.

\begin{figure}[h!]
\centering \scalebox{.5}{\includegraphics{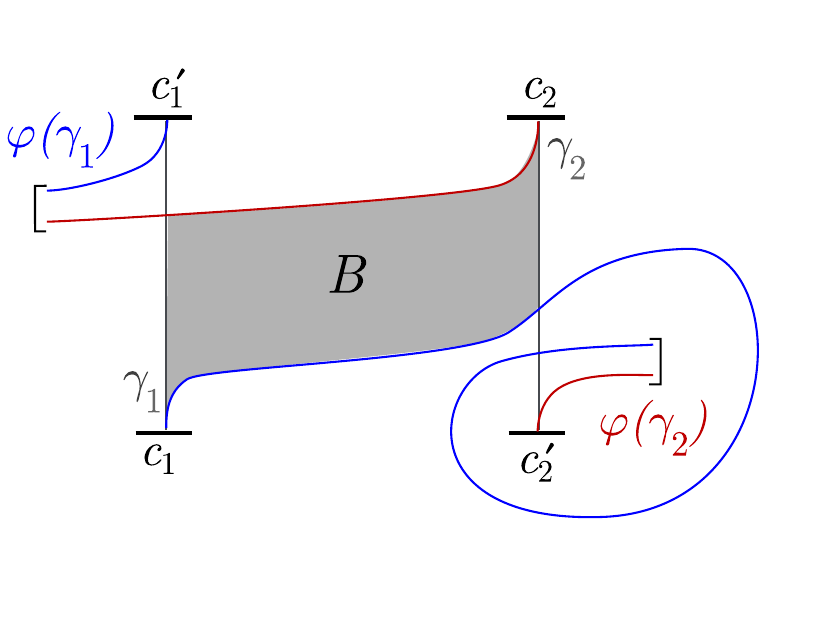}}
\caption[Not a right pair]{The pair $(\mon,\gamma_1)$, $(\mon,\gamma_2)$ does \emph{not} admit consistent right positions.}
\label{fig_rp_example_not}
\end{figure}

Finally, consider the open book decomposition indicated in Figure \ref{fig_rp_not_pos}, where the monodromy $\mon$ is $\tau^{-2}_\alpha \tau^{n_1}_{\beta_1} \tau^{n_2}_{\beta_2} \tau_{\beta_3} \tau_{\beta_4}$ (we have drawn the case $n_1 = 1$). Using the shaded initially parallel region, observe that the pair $(\mon,\gamma_1)$ and $(\mon, \gamma_2)$ admit no consistent right positions; indeed, for any $n_1, n_2 >0$ this will clearly remain true, as the picture will only be modified by twists about the boundary component parallel to $\beta_2$. In particular, $\mon$ has no positive factorization for any $n_1$ and $n_2$. On the other hand, it is a simple exercise to show that for any positive $n_1$ and $n_2$ each boundary component is `protected'; i.e. each properly embedded arc with endpoint on the boundary component under consideration maps to the right at that point (it suffices to check for either of the boundary components parallel to $\beta_3$ or $\beta_4$). In particular, each such monodromy is right-veering.  See e.g. \cite{l} for a more in-depth examination of this example.

\begin{figure}[htb]
\centering
\includegraphics[scale=.7]{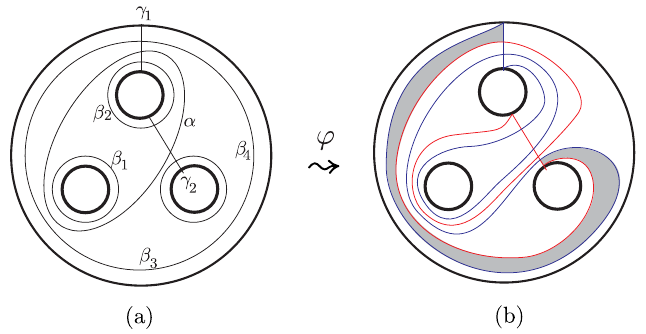}
\caption[Right veering but not positive]{(a) Surface and curves for the open book decomposition $(\pg_{0,4},\tau^{-2}_\alpha \tau^{n_1}_{\beta_1} \tau^{n_2}_{\beta_2} \tau_{\beta_3} \tau_{\beta_4})$. (b) The pair $(\mon,\gamma_1)$ (in blue), and $(\mon,\gamma_2)$ (in red) is \emph{not} a right pair, but $\mon$ is right veering for all $m,n \geq 1$.}
\label{fig_rp_not_pos}
\end{figure}

\end{example}

\subsection{The right position associated to a factorization}

Suppose we have a surface $\pg$, a mapping class $\mon \in \mcg$
given as a factorization $\omega$ of positive Dehn twists, and a set
$\{\gamma_i\}_{i=1}^n$ of pairwise non-intersecting, properly
embedded arcs in $\pg$. While each such arc admits at least one, and possibly many, right positions, the goal of this subsection is to give an algorithm which associates to $\omega$ a unique right position, denoted $\pos_\omega(\gamma_i)$ for each $i$, such that the set $\{\pos_\omega(\gamma_i)\}_{i=1}^n$ is consistent (Definition \ref{def_consistent}). Note that existence of such an algorithm proves Theorem \ref{thm_rp}.

We begin with the case of a single arc $\gamma$,
and construct $\pos_\omega(\gamma)$ by induction on the
number of twists $m$ in $\omega$.

\textbf{Base step: $\mathbf{m=1}$} Suppose $\mon = \tau_\alpha$, for
some $\alpha \in SCC(\pg)$. We begin by isotoping $\alpha$ so as to
minimize $\alpha \cap \gamma$. Label $\alpha \cap \gamma =
\{x_1, \ldots, x_p\}$, with indices increasing along $\gamma$. We
then define the right position $\pos_{\tau_\alpha}(\gamma)$ associated to
the twist $\tau_\alpha$ to be the points
$\{c=v_0,v_1,\ldots,v_{p-1}, c'=v_p\}$, where $\{c,c'\} =
\partial(\gamma)$, and, for $0 < i < p$, $v_i$ lies in the connected component
of $\gamma \setminus support(D_\alpha)$ between $x_i$ and
$x_{i+1}$ (Figure \ref{fig_base_step}).

\begin{figure}[h!]
\centering \scalebox{.5}{\includegraphics{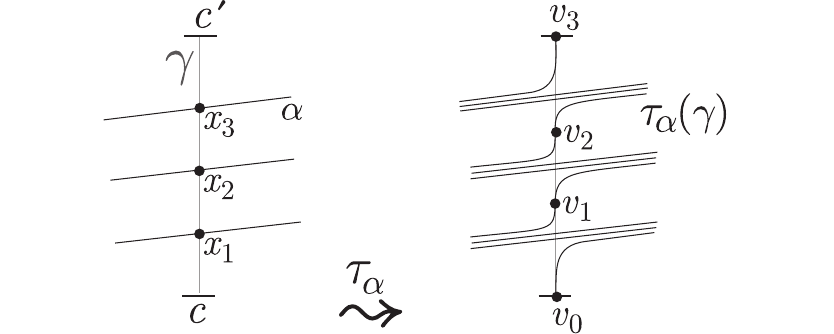}}
\caption[Algorithm - base case]{The base case: the right position $\pos_{\tau_\alpha}(\gamma)$ associated to a single Dehn twist.}
\label{fig_base_step}
\end{figure}

Now, this construction has as input only the isotopy class of $\alpha$, so we have:

\begin{lem}\label{lem_isotopy_independence_1}

The right position $\pos_{\tau_\alpha}(\gamma)$ defined in \textbf{base step} depends only on the isotopy class of $\alpha$.

\end{lem}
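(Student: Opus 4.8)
The plan is to reduce the assertion to the observation, made just after Definition~\ref{rp}, that the labelled index set attached to $(\mon,\gamma)$ is already intrinsic. Write $I=I(\gamma,\tau_\alpha)$ for the set of positively oriented interior points of $\gamma\cap\tau_\alpha(\gamma)$ (with $\tau_\alpha(\gamma)$ in minimal position), linearly ordered by position along $\gamma$ from the distinguished endpoint $c$. As recorded there, $(I,<)$ depends only on $\gamma$ and the mapping class $\tau_\alpha$, hence --- since $\tau_\alpha$ is determined by the isotopy class of $\alpha$ --- only on $\gamma$ and $[\alpha]$; and a right position is by definition a choice of subset of the resulting index set $\{1,\dots,|I|\}$, the endpoints $c,c'$ being always included. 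So it suffices to show that the particular subset singled out by the \textbf{base step} does not depend on the chosen minimal representative $\alpha$ (nor on the annular support $N=N(\alpha)$ used to build $D_\alpha$).

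With $\alpha$ fixed there is nothing left to choose: for $\alpha,\gamma$ in minimal position the standard representative of $D_\alpha(\gamma)$ --- obtained by dragging $\gamma$ once around $\alpha$ inside each sheet of $N$ it meets --- meets $\gamma$ minimally, and in the component of $\gamma\setminus N$ between consecutive points $x_i,x_{i+1}$ of $\alpha\cap\gamma$ it crosses $\gamma$ in exactly one positive interior point, which is the point $v_i$ of Figure~\ref{fig_base_step}. Thus the only freedom is the choice of $\alpha$ within $[\alpha]$. To kill it I would invoke the standard fact that two isotopic simple closed curves, each in minimal position with respect to the fixed properly embedded arc $\gamma$, are isotopic \emph{through} curves in minimal position with $\gamma$ (a bigon--criterion argument; cf.\ \cite{fm}). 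Let $\alpha_t$, $t\in[0,1]$, realize such an isotopy between two minimal representatives. Along it $N(\alpha_t)$, $D_{\alpha_t}$, $D_{\alpha_t}(\gamma)$, and hence $I(t)=I(\gamma,\tau_{\alpha_t})$ vary continuously, with $|\alpha_t\cap\gamma|$ and $|I(t)|$ constant and no two marked points ever colliding; tracking the $x_i$ and the $v_i$ in $t$ shows that the selected subset of $\{1,\dots,|I(t)|\}$ is locally constant, hence constant, which is the Lemma.

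The one point needing care is the input that $\alpha_t$ may be taken through minimal position, together with the consequence that $|I(t)|$ never jumps --- i.e.\ that no bigon between $\gamma$ and $D_{\alpha_t}(\gamma)$ is born or dies partway through the isotopy. A route that sidesteps this is to characterise the chosen points directly: setting $p=|\alpha\cap\gamma|$, the selected subset $\{v_1<\dots<v_{p-1}\}\subseteq I$ is the unique subset of cardinality $p-1$ for which each closed curve $\overline h_{v_i,v_{i+1}}$ --- obtained by closing the horizontal segment back along $\gamma$, with $c,c'$ taken as the outermost endpoints --- is isotopic to $\alpha$; this is the aside following the base step, and uniqueness follows because merging two consecutive such segments winds twice around $\alpha$ and so cannot close up to $\alpha$ (the only genuinely degenerate cases, $\alpha$ null- or boundary-parallel, give $p\le 1$ and a trivial right position). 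Since $p$, $I$, and the relation ``isotopic to $\alpha$'' all make sense knowing only $\gamma$ and $[\alpha]$, this description is manifestly representative-independent, and the Lemma follows.
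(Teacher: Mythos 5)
Your proposal is correct and amounts to a careful expansion of the paper's own (one-line) justification: the paper simply notes that the base-step construction ``has as input only the isotopy class of $\alpha$,'' and your second, representative-free characterization via the closed curves $\overline{h}_{v_i,v_{i+1}}\simeq\alpha$ is exactly the aside recorded after the base step. In particular, since in minimal position the interior intersections $\gamma\cap\tau_\alpha(\gamma)$ are precisely the $p-1$ points $v_i$ (all positive), the selected subset is the full intrinsic index set and the lemma reduces to the remark following Definition~\ref{rp}, as you observe at the outset.
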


That for any collection of disjoint properly embedded arcs the resulting right positions are consistent will follow easily from observations to be made further on in this section. In the meantime, the impatient reader might enjoy (and the intuition of the patient reader might benefit from) some reflection on Figure \ref{fig_base_step_consistent} (b).

\begin{figure}[h!]
\centering \scalebox{1.3}{\includegraphics{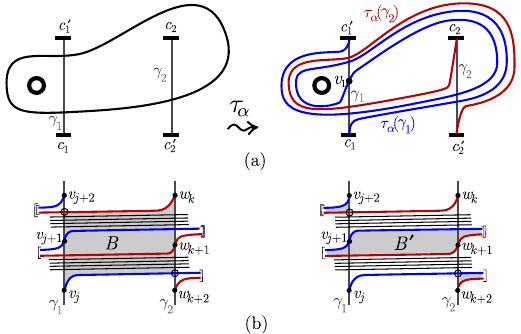}}
\caption[The base step]{(a) An example of the right position associated to a single Dehn twist. (b) A typical initially parallel region $B$ in the pair $\pos_{\tau_\alpha}(\gamma_1)$ and $\pos_{\tau_\alpha}(\gamma_2)$. Segments $h_{v_j}$ and $h_{w_k}$ are edges of the shaded initially parallel region in the leftmost figure, and are completed to $v_{j+2}, w_{k+2}$ by the shaded region in the rightmost figure}
\label{fig_base_step_consistent}
\end{figure}

\subsubsection{Description of the inductive step}

For the inductive step, we demonstrate an algorithm which has as input a surface $\pg$, mapping class $\mon \in \veer$, properly embedded arc $\gamma \hookrightarrow \pg$, right position $\pos(\mon,\gamma)$, and $\alpha \in SCC(\pg)$. The output then will be a unique right position $\pos^\alpha = \pos^\alpha(\tau_\alpha \circ \mon, \gamma)$. A key attribute of the algorithm will be that, while $\alpha$ is given as a particular representative of its isotopy class $[\alpha]$, the resulting right position $\pos^\alpha$ will be independent of choice of representative.

To keep track of things in an isotopy-independent way, we consider \emph{triangular regions} in $\pg$:

\begin{defin}
Suppose $\gamma$ and  $\gamma'$ are properly embedded arcs in a surface $\pg$, isotoped to minimize intersection. Then for $\alpha \in SCC(\pg)$, a \emph{triangular region} $T$ (of the triple $(\alpha,\gamma,\gamma')$) is the image of an immersion $f:\Delta \looparrowright \pg$, where $\Delta$ is a 2-simplex and the image of each edge is contained in one of $\alpha,\gamma$ or $\gamma'$.
\end{defin}

\begin{defin}\label{def_triangles}
 A triangular region $T$ for the ordered triple $(\alpha,\gamma,\gamma')$ is
 \begin{itemize}
   \item \emph{essential} if $\alpha$ can be isotoped relative to $\partial (T) \cap \alpha$ so as to intersect $\gamma$ and $\gamma'$ in a minimal number of points.
   \item \emph{upward} (\emph{downward}) if bounded by $\alpha,\gamma$ and $\gamma'$ in clockwise (counterclockwise) order.
 \end{itemize}
\end{defin}

\begin{figure}[h!]
\centering \scalebox{.9}{\includegraphics{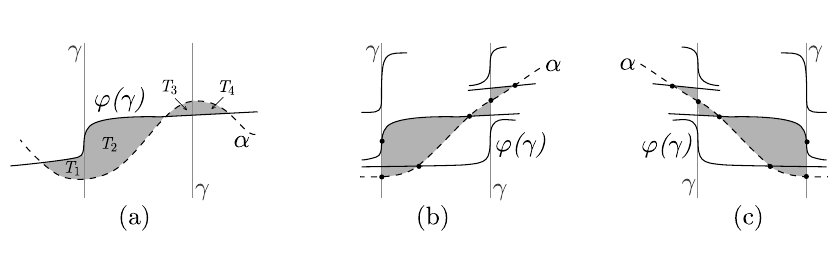}}
\caption[Regions and vertices]{(a) If the shaded region is a maximal bigon chain, then $T_1$ and $T_4$ are essential triangular regions for $(\alpha,\gamma,\mon(\gamma))$, $T_2$ and $T_3$ are not. (b) Upward triangular regions. (c) Downward triangular regions.}
\label{fig_regions_and_vertices}
\end{figure}

We begin with a brief description of the algorithm, with an illustrative example in Figure~\ref{fig_algorithm}. Let $\alpha$ be a representative of $[\alpha]$ which minimizes $\alpha \cap \gamma$ and $\alpha \cap \mon(\gamma)$, so in particular all triangular regions for the triple $(\alpha,\gamma,\mon(\gamma))$ are essential. Furthermore, chose $support(D_\alpha)$ so as not to intersect any point of $\gamma \cap \mon(\gamma)$ (Figure~\ref{fig_algorithm}(a)) (recall that $D_\alpha$ refers to the Dehn twist about fixed $\alpha \in [\alpha]$, while $\tau_\alpha$ is its isotopy class). Consider then the image $D_\alpha(\mon(\gamma))$, which differs from $\mon(\gamma)$ only in $support(D_\alpha)$ (Figure~\ref{fig_algorithm}(b)). Now, the only bigons bounded by the arcs $D_\alpha(\mon(\gamma))$ and $\gamma$ contain vertices which were in upward triangles in the original configuration. In particular, there is an isotopy of $D_\alpha(\mon(\gamma))$ over (possibly some subset of) these bigons which minimizes $\gamma \cap \tau_\alpha(\mon(\gamma))$ (the issue of exactly when a proper subset of the bigons suffices is taken up below, in Section \ref{details}). There is thus an inclusion of those points of $\pos$ which are \emph{not} in upward triangles into the set of positive intersections of $\gamma \cap \tau_\alpha(\mon(\gamma))$, whose image is therefore a right position (Figure~\ref{fig_algorithm}(c)).

\begin{figure}[h!]
\centering \scalebox{.9}{\includegraphics{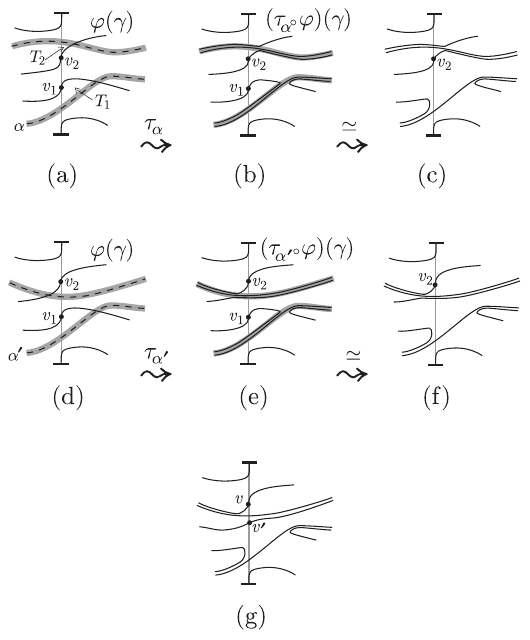}}
\caption[Description of the algorithm]{Top row: (a) The setup for the above discussion. Triangular region $T_1$ is upward, while $T_2$ is downward. The support of the twist $D_\alpha$ is shaded. (b) The result of $D_\alpha$. There is a single bigon, in which $v_1$ is a vertex. (c) The result of isotoping $D_\alpha(\mon(\gamma))$ over this bigon so as to minimize intersection with $\gamma$. Bottom row: Same as the top, but with $\alpha'$. Note that while the images $D_\alpha(\mon(\gamma))$ and $D_{\alpha'}(\mon(\gamma))$ are of course isotopic, the right positions of (c) and (f) differ. Finally, (g) indicates the isotopy-independent right position (i.e. the points $v$ and $v'$, along with $\partial(\gamma)$) which our algorithm is meant to pick out.}
\label{fig_algorithm}
\end{figure}

Note however that, as described, this resulting right position is $\emph{not}$ independent of the choice of $\alpha$; indeed, if $T$ is any downward triangle, we may isotope the given $\alpha$ over $T$ (e.g. go from Figure~\ref{fig_algorithm}(a) to Figure~\ref{fig_algorithm}(d), with $T=T_2$) to obtain $\alpha'\in [\alpha]$ such that $\alpha' \cap \gamma$ and $\alpha' \cap \mon(\gamma)$ are also minimal. The algorithm of the previous paragraph then determines a distinct right position for $(\tau_\alpha' \circ \mon,\gamma)$ (Figure~\ref{fig_algorithm}(f)). Motivated by this observation, we refine the algorithm by adding \emph{all} points of $\gamma \cap \tau_\alpha(\mon(\gamma))$ which would be obtained by running the algorithm for any allowable isotopy of $\alpha$ (Figure~\ref{fig_algorithm}(g)). As we shall see, this simply involves adding a single point for each downward triangle in the original configuration.

\subsubsection{Details of the inductive step}\label{details}

This subsection gives the technical details necessary to make the algorithm work as advertised, and extracts and proves the various properties we will require the algorithm and its result to have.

We begin with some results concerning triangular regions. Let $\gamma, \gamma'$ be properly embedded, non-isotopic arcs in $\pg$, with $\partial(\gamma) = \partial(\gamma')$, isotoped to minimize intersection. Our motivating example, of course, is the case $\gamma' = \mon(\gamma)$. Let $\G$ denote their union. A \emph{vertex} of $\G$ is thus an intersection point $\gamma \cap \gamma'$. Let $\alpha \in SCC(\pg)$ be isotoped so as not to intersect any vertex of $\G$. A \emph{bigon} in this setup is an immersed disc $B$ bounded by the pair ($\alpha$, $\gamma$) or by ($\alpha$, $\gamma'$). Similarly, a \emph{bigon chain} is a set $\{B_i\}_{i=1}^n$ of bigons bounded exclusively by one of the pairs $(\alpha,\gamma), (\alpha,\gamma')$, for which the union $\bigcup\{\alpha \cap \partial(B_i)\}_{i=1}^n$ is a connected segment of $\alpha$ (Figure~\ref{fig_bigons}(a)). Finally, we say points $p,p' \in \alpha \cap \G$ are \emph{bigon-related} if they are vertices in a common bigon chain.

Suppose then that $T$ is a triangular region in this setup, with vertex $v \in \gamma \cap \gamma'$. We define the \emph{bigon collection associated to $T$}, denoted $\mathcal{B}_T$, as the set of points $\{p \in \alpha \cap \G \ | \ p $ is bigon-related to a vertex of $T\}$ (Figure~\ref{fig_bigons}(b)). Note that, if a vertex of $T$ along $\alpha$ is not a vertex in any bigon, the vertex is still included in $\B_T$, so that, for example, the bigon collection associated to an essential triangle $T$ is just the vertices of $T$ along $\alpha$.

Now, orienting $\gamma$ and $\gamma'$ such that the point $v$ is positive, $v$ separates each of $\gamma$ and $\gamma'$ into two segments, which we label $\gamma_+,\gamma_-,\gamma'_+$ and $\gamma'_-$ in accordance with the orientation. For each bigon collection $\B$ associated to a triangle with vertex $v$, we define $\sigma_v(\B) := (|\mathcal{B} \cap\gamma_+|,|\mathcal{B} \cap\gamma_-|,|\mathcal{B} \cap\gamma'_+|,|\mathcal{B} \cap\gamma'_-|) \in (\mathbb{Z}_2)^4$, where intersections numbers are taken mod 2 (Figure~\ref{fig_bigons}(b)).

\begin{figure}[h!]
\centering \scalebox{.7}{\includegraphics{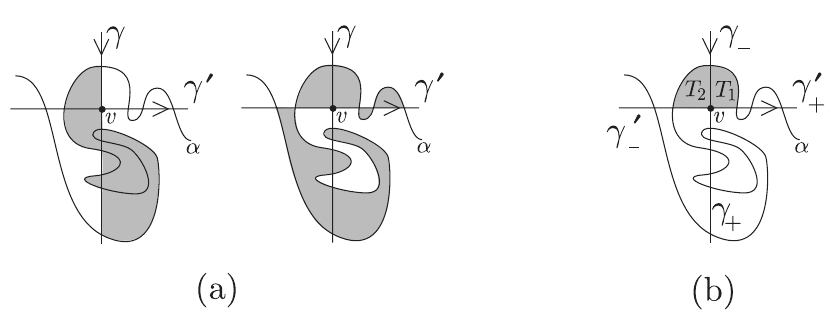}}
\caption[Bigons]{(a) The shaded regions are distinct maximal bigon chains. (b) As each chain from (a) has a vertex in each of the triangular regions $T_1, T_2$, the bigon collection $\B_{T_1} = \B_{T_2}$ includes each intersection point of each chain. In this example, $\sigma_v(\B_{T_1})=(0,1,1,0)$, so the collection is downward. Note that $T_1$ is an essential downward triangular region (Definition \ref{def_triangles}), while $T_2$ is non-essential.}
\label{fig_bigons}
\end{figure}

Finally, we label a bigon collection $\B$ as \emph{upward} (with respect to $v$) if $\sigma_v(\B)$ $ \in \{(1,0,1,0),$ $ (0,1,0,1)\}$, \emph{downward} if $\sigma_v(\B)$ $ \in \{(1,0,0,1),$ $(0,1,1,0)\}$, \emph{non-essential} otherwise (Figure~\ref{fig_bigons}(b)). Thus the bigon collection associated to upward (downward) essential triangular region $T$ is itself upward (downward). 

\begin{lem}\label{lem_triangles}
Let $\alpha \in [\alpha]$ be such that $\alpha \cap \gamma$ and $\alpha \cap \gamma'$ are minimal, and $v \in \gamma' \cap \gamma$. Then the number of essential triangles with vertex $v$ in the triple $(\alpha,\gamma,\gamma')$ depends only on the isotopy class $[\alpha]$.
\end{lem}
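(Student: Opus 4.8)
The plan is to show that the count of essential triangles at a fixed vertex $v$ is an invariant of $[\alpha]$ by reducing any two minimal-position representatives $\alpha, \alpha'$ to one another through a sequence of elementary moves, and checking that each move preserves the count. Since $\alpha$ and $\alpha'$ are isotopic simple closed curves each in minimal position with respect to both $\gamma$ and $\mon(\gamma)$, a standard argument (the bigon/innermost-disc technology underlying the change-of-coordinates principle, see \cite{fm}) lets us connect them by an ambient isotopy which can be factored into finitely many moves of two types: (i) isotopies supported away from $\G = \gamma \cup \mon(\gamma)$, which manifestly change nothing; and (ii) isotopies which push an arc of $\alpha$ across a single vertex of $\G$ (equivalently, across a triangular region $T$), or across a bigon of the pair $(\alpha,\gamma)$ or $(\alpha,\mon(\gamma))$. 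The content of the lemma is that the parity-type invariants $\sigma_v$ introduced before the statement are unchanged by the moves of type (ii), so that the number of \emph{essential} triangles with vertex $v$ — those whose associated bigon collection $\B_T$ has $\sigma_v(\B_T) \in \{(1,0,1,0),(0,1,0,1)\}$ — stays fixed.

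The key computation is as follows. First I would observe that passing $\alpha$ across a bigon of $(\alpha,\gamma)$ or $(\alpha,\mon(\gamma))$ merely merges or splits bigon chains and changes each relevant $|\B \cap \gamma_\pm|$, $|\B \cap \mon(\gamma)_\pm|$ by an even number — this is exactly why $\sigma_v$ was defined mod $2$ — so it leaves every $\sigma_v$ and hence the essential-triangle count untouched. The substantive case is pushing $\alpha$ across a triangular region $T$ with vertex $v$. Here one checks, by a direct local analysis of the four sectors $\gamma_+,\gamma_-,\mon(\gamma)_+,\mon(\gamma)_-$ around $v$ (the picture in Figure \ref{fig_bigons}), that such a move replaces $T$ by a triangular region on the other side and changes the intersection data by $(1,0,1,0)$ or $(0,1,0,1)$ up to relabeling — i.e. it only ever trades \emph{non-essential} configurations among themselves, or trades an upward collection for a downward one, never creating or destroying an essential triangle at $v$. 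Phrased invariantly: the set of vertices of $\G$ joined to $v$ along a \emph{null-homotopic} (in the surface, rel the 1-complex structure) triangle is determined by the homology/intersection class of $\alpha$ relative to the four sectors at $v$, and that class is isotopy-invariant.

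Concretely, the cleanest route is probably to exhibit the essential-triangle count directly as such an invariant rather than tracking moves: at the vertex $v$, the two local branches of $\gamma$ and of $\mon(\gamma)$ cut a neighborhood of $v$ into four quadrants; an essential triangle with vertex $v$ corresponds to a minimal arc of $\alpha$ cutting off a disc in one of the two ``upward'' quadrant-pairs, and the number of such arcs is the geometric intersection number of $[\alpha]$ with a short transversal through that quadrant-pair — which is manifestly an isotopy invariant of $[\alpha]$ once $\gamma$ and $\mon(\gamma)$ are fixed in minimal position. I would then note that minimality of $\alpha \cap \gamma$ and $\alpha \cap \mon(\gamma)$ guarantees these local counts realize the geometric intersection numbers, completing the argument.

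The main obstacle I expect is bookkeeping rather than conceptual: making precise the claim that any two doubly-minimal representatives are joined by moves of the stated form, and that a triangle-crossing move genuinely affects $\sigma_v$ only in the claimed way when several bigon chains through $v$ interact simultaneously (as in the $T_1,T_2$ example of Figure \ref{fig_bigons}). Handling that cleanly may require first isotoping $\alpha$ to a position in which distinct triangular regions at $v$ have disjoint bigon collections, or organizing the crossings so that only one chain is modified at a time; this normalization step is where the real work sits, but it is routine given the minimal-position hypotheses.
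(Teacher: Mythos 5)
Your overall strategy --- factor the isotopy between representatives into elementary moves and show that the mod-2 invariants $\sigma_v$ are unchanged --- is the same as the paper's, and your observation that bigon births and deaths alter each coordinate of $\sigma_v$ by an even amount is exactly the paper's key point about those moves. But your local computation for the move across the vertex $v$ is wrong: pushing a strand of $\alpha$ across $v$ moves one point of $\alpha\cap\gamma$ from $\gamma_+$ to $\gamma_-$ (or back) \emph{and} one point of $\alpha\cap\mon(\gamma)$ from $\mon(\gamma)_+$ to $\mon(\gamma)_-$, so $\sigma_v(\B)$ changes by addition of $(1,1,1,1)$, which preserves each of the classes upward $\{(1,0,1,0),(0,1,0,1)\}$ and downward $\{(1,0,0,1),(0,1,1,0)\}$ separately. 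Your claimed change by $(1,0,1,0)$ or $(0,1,0,1)$ ``up to relabeling'' would send an essential (upward or downward) collection to a non-essential one, and your fallback assertion that the move may ``trade an upward collection for a downward one'' is also not what happens (and sits uneasily with Lemma \ref{lem_triangle_uniqueness}). This is the one computation the lemma actually turns on, so it must be redone.

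Your proposed ``cleanest route'' --- counting essential triangles at $v$ as the geometric intersection number of $\alpha$ with a short transversal through a quadrant at $v$ --- does not work as stated. Being a triangular region with vertex $v$ is a \emph{global} condition: the edges along $\gamma$ and $\mon(\gamma)$ may be long, the $\alpha$-edge of a large essential triangle need not pass anywhere near $v$ (so it misses any short transversal), and conversely a strand cutting the corner near $v$ is the edge of a triangular region only if the resulting closed curve actually bounds an immersed disc. Finally, the ``normalization'' you defer as routine bookkeeping is where the paper's proof actually lives: it defines the equivalence $T_1\sim T_2 \Leftrightarrow \B_{T_1}=\B_{T_2}$ for an \emph{arbitrary} (non-minimal) representative, checks that distinct bigon collections remain distinct under bigon births and deaths (a birth or death cannot merge two maximal bigon chains), and only at the end specializes to a minimal representative, where the equivalence classes become exactly the essential triangular regions at $v$. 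Without that step you cannot rule out the count changing because two triangles at $v$ become identified, or a new one is created, partway through the isotopy.
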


\begin{proof}
Let $\alpha$ be an arbitrary representative of $[\alpha]$ (in particular $\alpha \cap \gamma$ and $\alpha \cap \gamma'$ are not necessarily minimal), and $v$ a vertex.  We define an equivalence relation on the set of triangular regions with vertex $v$ by $T_1 \sim T_2 \Leftrightarrow \B_{T_1} = \B_{T_2}$. In particular, there is a 1-1 correspondence between $\{$triangular regions with vertex $v\} \diagup \sim$ and the set of bigon collections associated to triangular regions with vertex $v$.

Now, if $\alpha'$ is another representative of the isotopy class $[\alpha]$, we may break the isotopy into a sequence $\alpha = \alpha_1 \simeq \cdots \simeq \alpha_n = \alpha'$, where each isotopy $\alpha_i \simeq \alpha_{i+1}$ is either a bigon birth, a bigon death, or does not affect either of  $|\alpha \cap \gamma|$ and $|\alpha \cap \gamma'|$. Note that, if $\B$ is upward or downward, an isotopy $\alpha_i \simeq \alpha_{i+1}$ which does not cross $v$ does not affect $\sigma_v(\B)$, while an isotopy $\alpha_i \simeq \alpha_{i+1}$ which crosses $v$ changes $\sigma_v(\B)$ by addition with $(1,1,1,1)$. In either case, the type of the triangle is preserved; i.e. we can keep track of an essential upward (downward) bigon collection through each isotopy. Furthermore, any two distinct bigon collections will have distinct images under each such isotopy (a bigon birth/death cannot cause two maximal bigon chains to merge). We therefore have an integer $a(v)$, defined as the number of essential bigon collections $\B$ associated to triangular regions with vertex $v$, which depends only on $[\alpha]$.

Finally, if we take $\alpha'$ to be a representative of $[\alpha]$ which intersects $\gamma$ and $\gamma'$ minimally, then $\{$triangular regions with vertex $v\} \diagup \sim$ is by definition just the set of essential triangular regions with vertex $v$, and has size $a(v)$.

\end{proof}

\begin{lem}\label{lem_triangle_uniqueness}
Let $\alpha$ be a fixed representative of the isotopy class $[\alpha]$ which has minimal intersection with $\gamma$ and $\gamma'$, and $v \in \gamma \cap \gamma'$ a vertex of an upward triangle $T$. Then $v$ is not a vertex of any downward triangular region $T'$ with edge along $\alpha$.
\end{lem}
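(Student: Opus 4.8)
Fix $\alpha$ as in the statement, so that $\alpha$ meets each of $\gamma$ and $\mon(\gamma)$ minimally and every triangular region of the triple $(\gamma,\mon(\gamma),\alpha)$ is essential. My plan is to work entirely locally at $v$ and manufacture a forbidden bigon. Near $v$ the arcs $\gamma$ and $\mon(\gamma)$ cross once transversally, and their four half-arcs $\gamma_+,\gamma_-,\mon(\gamma)_+,\mon(\gamma)_-$ leave $v$ as four rays, cutting a disc neighbourhood of $v$ into four sectors; write $S_{\epsilon\epsilon'}$ for the one bounded by the rays $\gamma_\epsilon$ and $\mon(\gamma)_{\epsilon'}$, so that $S_{++}$ is opposite $S_{--}$ and $S_{+-}$ is opposite $S_{-+}$. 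A triangular region $T$ with vertex $v$ has its two edges at $v$ running along $\gamma_\epsilon$ and $\mon(\gamma)_{\epsilon'}$ for some signs, and (since each corner of the model $2$-simplex has angle less than $\pi$, while $S_{\epsilon\epsilon'}$ always does too) its interior near $v$ is exactly the sector $S_{\epsilon\epsilon'}$; as $T$ is essential, $\B_T$ is just its pair of non-$v$ corners, so $\sigma_v(\B_T)\in\{(1,0,1,0),(0,1,0,1)\}$ precisely when $\epsilon=\epsilon'$. Thus, in the $\sigma_v$-terminology introduced before Lemma~\ref{lem_triangles} (which for essential triangles matches Definition~\ref{def_triangles}), the upward triangles at $v$ are exactly those occupying one of the two ``diagonal'' sectors $S_{++},S_{--}$, and the downward ones exactly those occupying $S_{+-}$ or $S_{-+}$.

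Suppose, for contradiction, that $v$ is a vertex of an upward triangle $T$ and of a downward triangle $T'$. Any one of $S_{++},S_{--}$ and any one of $S_{+-},S_{-+}$ are adjacent, sharing exactly one of the four rays; interchanging the names of $\gamma$ and $\mon(\gamma)$ (which by the previous paragraph also interchanges \emph{upward} with \emph{downward}, hence $T$ with $T'$) if the shared ray is a $\mon(\gamma)$-ray, we may assume the shared ray is $\gamma_\epsilon$. Then $T$ occupies $S_{\epsilon\epsilon}$, with corners $v$, $p\in\alpha\cap\gamma_\epsilon$, $q\in\alpha\cap\mon(\gamma)_\epsilon$ and $\alpha$-edge $a$; while $T'$ occupies $S_{\epsilon,-\epsilon}$, with corners $v$, $p'\in\alpha\cap\gamma_\epsilon$, $q'\in\alpha\cap\mon(\gamma)_{-\epsilon}$ and $\alpha$-edge $a'$. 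The $\gamma$-edges $[v,p]$ and $[v,p']$ are collinear at $v$, and since $S_{\epsilon\epsilon}$ and $S_{\epsilon,-\epsilon}$ lie on opposite sides of that line, $T$ and $T'$ lie on opposite sides of $\gamma$ along their $\gamma$-edges.

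The engine of the proof is the following surgery observation about the model immersion $f\colon\Delta\to\pg$ of a triangular region: if $\delta$ is an embedded arc in $\Delta$ with endpoints on $\partial\Delta$ that $f$ maps into one of $\gamma$, $\mon(\gamma)$, $\alpha$, then $\delta$ cuts off a sub-$2$-gon or sub-triangle of $\Delta$ whose $f$-image is either (i) an immersed bigon between two of $\gamma,\mon(\gamma),\alpha$ — impossible, since each of these pairs is in minimal position and an immersed bigon contains an embedded one — or (ii) a strictly smaller triangular region with vertex $v$, still upward or still downward according to which sectors its edges enter. I would use this as follows. If $p\ne p'$, say with $p$ nearer $v$ along $\gamma_\epsilon$ (the other case being symmetric under exchanging $T$ and $T'$), then $p$ lies on the interior of the $\gamma$-edge of $T'$; the branch of $\alpha$ at $p$ other than $a$ points to the side of $\gamma$ occupied by $T'$, hence enters $T'$, and pulling it back into the model simplex of $T'$ and applying the observation either yields a bigon as in (i) or replaces $T'$ by a smaller downward triangle whose $\gamma$-edge is exactly $[v,p]$; iterating, we reduce to the case $p=p'$, in which $T$ and $T'$ share the entire $\gamma$-edge $[v,p]$. (One also invokes the observation to exclude the degenerate possibility that one of $q,q'$ lies on the interior of the other $\alpha$-edge, using in addition that $\gamma$ and $\mon(\gamma)$ bound no bigon with one another and that $\gamma$, being embedded, bounds none with itself; so $a$ and $a'$ meet only at $p$.) With $p=p'$, the arcs $a$ and $a'$ leave $p$ along the two opposite directions of $\alpha$, so $a\cup a'$ is an embedded sub-arc of $\alpha$ from $q$ to $q'$; gluing the two model simplices along their common $\gamma$-edge $[v,p]$ (along which $T,T'$ sit on opposite sides of $\gamma$) produces an immersed disc whose boundary is $a\cup a'$ together with the sub-arc of $\mon(\gamma)$ running from $q$ through $v$ to $q'$ — an immersed bigon between $\alpha$ and $\mon(\gamma)$, contradicting the minimality of $\alpha\cap\mon(\gamma)$. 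This contradiction proves the lemma.

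I expect the main obstacle to be making the surgery observation precise and exhaustive: one must run through each possible location of the far endpoint of $\delta$ (the interior of each of the three edges, or a vertex of $\Delta$), identify in each case which pair of curves bounds the resulting bigon or check that the resulting smaller region is genuinely an essential triangular region in the sense of Definition~\ref{def_triangles} with the asserted up/down type, and at each appeal invoke the standard fact that an immersed bigon between two curves in minimal position is impossible (passing to an innermost sub-bigon). The sector bookkeeping of the first paragraph, and the verification that the $\sigma_v$-classification agrees with the clockwise/counter-clockwise one of Definition~\ref{def_triangles} for essential triangles, are routine comparisons with the discussion preceding the lemma.
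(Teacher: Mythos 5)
Your proposal is correct and follows essentially the same route as the paper: classify triangles at $v$ by which of the four local sectors cut out by $\gamma$ and $\mon(\gamma)$ they occupy (upward triangles fill one opposite pair of sectors, downward the other), and then use embeddedness of $\alpha$ together with intersection-minimality to show that an upward and a downward triangle at $v$ would force a forbidden bigon. The paper simply asserts the bigon from its figure, whereas you make the same mechanism explicit by reducing to a shared $\gamma$-edge and gluing the two triangular regions into an immersed bigon between $\alpha$ and $\mon(\gamma)$; this is a legitimate fleshing-out of the published two-line argument rather than a different proof.
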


\begin{proof}
Consider a neighborhood of $v$, labeled as in Figure~\ref{fig_triangle_intersections}. As $T$ is upward, it must be $T_1$ or $T_3$, while any downward $T'$ must be $T_2$ or $T_4$. Without loss of generality then suppose $T=T_1$. But then neither of $T_2,T_4$ can be triangular regions without creating a bigon, violating minimality.
\end{proof}

\begin{figure}[h!]
\centering \scalebox{.7}{\includegraphics{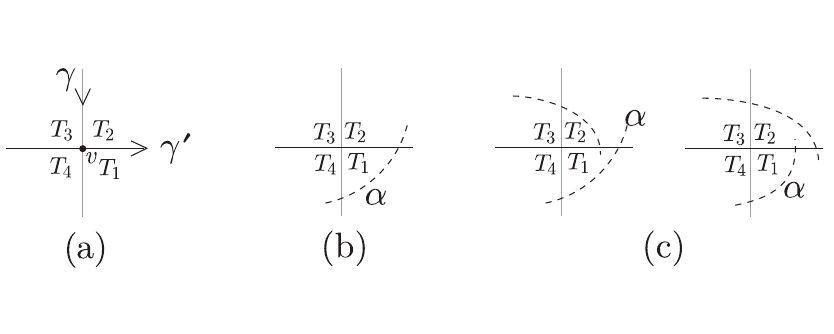}}
\caption[Triangular regions]{(a) A neighborhood of $v$. (b) The upward triangle $T_1$. (c) As $\alpha$ has no self-intersection, $T_2$ cannot be a triangular region without creating a bigon.}
\label{fig_triangle_intersections}
\end{figure}

It follows from Lemmas~\ref{lem_triangles} and \ref{lem_triangle_uniqueness} that we may unambiguously refer to a vertex of an essential triangular region as downward, upward, or neither, in accordance with any essential triangular region of which it is a vertex. In particular, if $\alpha$ has minimal intersection with both $\gamma$ and $\gamma'$, then all triangular regions are essential. Henceforth we will drop the adjective `essential'.

\begin{defin}\label{def_triangle reflection}
Suppose $T$ is a triangular region with vertex $v \in \gamma \cap \gamma'$. Let $h_t^T$, $t\in[0,1]$, denote a family of diffeomorphisms of $\pg$ which isotope $\alpha$ over $T$ to obtain another triangular region $T'$ of the same type (upward/downward) as $T$ with vertex $v$ (Figure~\ref{fig_reflection}). Note that if $T$ contains sub-regions with vertex $v$ then the process involves isotoping the innermost region first, and proceeding to $T$ itself. We call such an isotopy a \emph{shift over $v$}.
\end{defin}

\begin{figure}[h!]
\centering \scalebox{.4}{\includegraphics{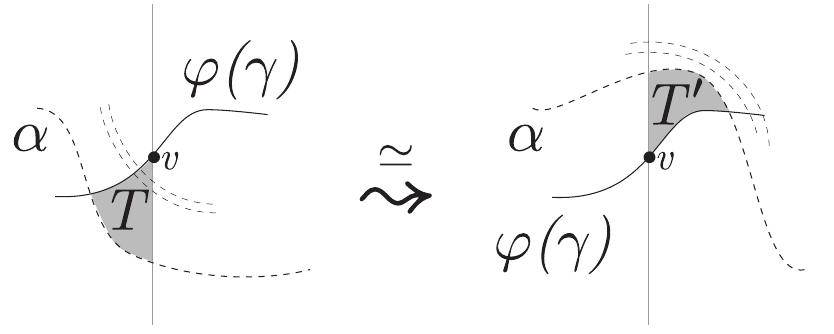}}
\caption[Shifts]{A shift over a vertex $v$.}
\label{fig_reflection}
\end{figure}

We require a final few definitions before we bring the pieces together:

\begin{defin}\label{def_inclusion}
Let $\gamma_1$ and $\gamma_2$ be (not necessarily distinct) properly embedded arcs in $\pg,$ and $\mon \in \mcg$. Suppose $\gamma_1, \mon(\gamma_2)$, and $\alpha$ are isotoped to minimize intersection. Choose $support(D_\alpha)$ to be disjoint from $\gamma_1 \cap \mon(\gamma_2)$. Then let $i_\alpha: (\gamma_1 \cap \mon(\gamma_2)) \hookrightarrow (\gamma_1 \cap D_\alpha(\mon(\gamma_2)))$ be the obvious inclusion. Note that $D_\alpha(\mon(\gamma_2))$ will not in general have minimal intersection with $\gamma_1$.
\end{defin}

\begin{defin}\label{def_fixed_points}
Let $\gamma_1, \gamma_2$ and $\mon$ be as in the previous definition, and  $p \in \gamma_1 \cap
\mon(\gamma_2)$. If there is a representative $\alpha \in SCC(\pg)$ of the isotopy class $[\alpha]$ such that the image $D_\alpha (\mon(\gamma_2))$ can be isotoped to minimally intersect $\gamma_1$ while fixing a neighborhood of $i_\alpha(p)$, we say $p$ is \emph{fixable under} the mapping class $\tau_\alpha$, and $i_\alpha(p)$ simply \emph{fixable}, each with image $p^\alpha$. Of course the identification of $p^\alpha \in \gamma_1 \cap (\tau_\alpha \circ \mon)(\gamma_2)$ depends on $\alpha \in [\alpha]$. Similarly, for a factorization $\omega = \fac$ of $\mon$, we say $p \in \gamma\cap \mon(\gamma_2)$ is \emph{fixable under $\omega$} if $p$ is fixable under each successive $\tau_{\alpha_i}$. Various examples are gathered in Figure~\ref{fig_fixed_points}.
\end{defin}

\begin{figure}[h!]
\centering \scalebox{.9}{\includegraphics{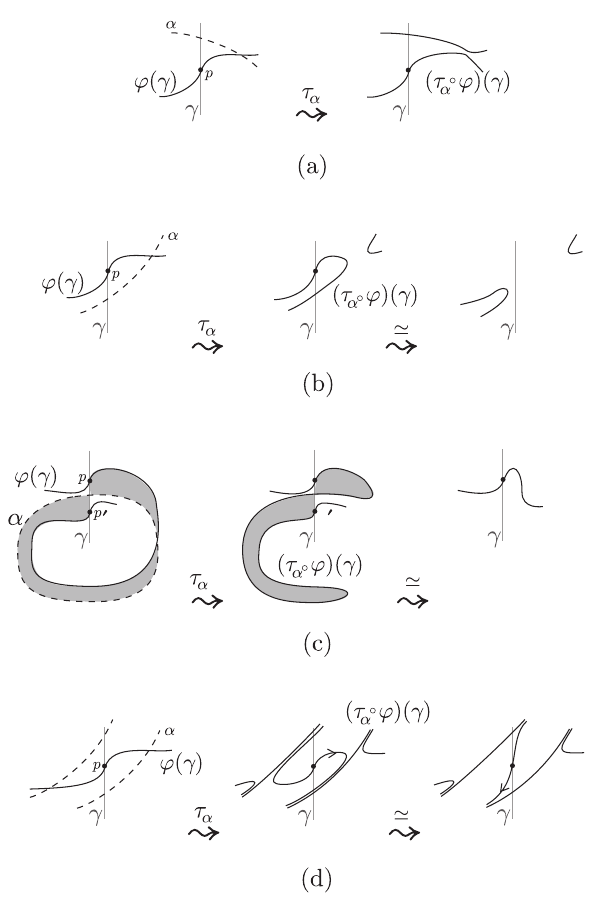}}
\caption[Fixed points]{(a) A downward point is always fixable. (b) An upward point is \emph{not} fixable, unless (c) there is another upward triangle sharing the other two vertices, corresponding to a cancelling bigon in the image. Note that, in this case, while either of the points $v, v'$ is individually fixable, the pair is not necessarily simultaneously fixable. Finally, (d) illustrates the reason for demanding that a \emph{neighborhood} of the point be fixed - though the intersection-minimizing isotopy may be done without removing the intersection point $p$, there is no fixable neighborhood, and so $p$ is not fixable.}
\label{fig_fixed_points}
\end{figure}

\begin{defin}
Let $\gamma_1, \gamma_2$, $\mon$, and $p$ be as in the previous definition. The \emph{image set} of $p$ under $\tau_\alpha$ is $i_\aclass(p) : = \{q \in \gamma_1 \cap (\tau_\alpha \circ \mon)(\gamma_2) \ | \ \exists \alpha \in [\alpha]$ such that $q = p^\alpha\} = \bigcup_{\alpha \in [\alpha]} p^\alpha$.

\end{defin}

Finally,
\begin{defin}\label{p_alpha}

 Let $\pos=\pos(\mon,\gamma)$ be a right position, and $\alpha \in SCC(\pg)$. We define the right position $\pos^\alpha= \pos^\alpha(\tau_\alpha \circ \mon,\gamma)$ by 
 $\pos^\alpha := \bigcup_{p \in \pos} i_{[\alpha]}(p)$.  If $\omega = \fac$ is a word in positive Dehn twists, we define $\pos_{\omega}(\gamma)$ to be the right position given by applying this construction for each twist; i.e. 
 $\pos_{\omega}(\gamma) = (\cdots(((\pos_{\tau_{\alpha_1}})^{\alpha_2})^{\alpha_3})\cdots )^{\alpha_n}$ (where $\pos_{\tau_{\alpha_1}}$ is the right position defined in \textbf{base step}).
\end{defin}

The right position $\pos_\omega(\gamma)$ is thus a maximal subset of the positive intersections in $\gamma \cap \mon(\gamma)$ such that each point of $\pos_\omega(\gamma)$ is fixable under $\omega$, and is in fact the unique such position.

As a simple example, Figure~\ref{fig_lantern_example} illustrates distinct right positions of a pair $(\mon,\gamma)$ associated to distinct factorizations of a mapping class $\mon$.

\begin{figure}[h!]
\centering \scalebox{.7}{\includegraphics{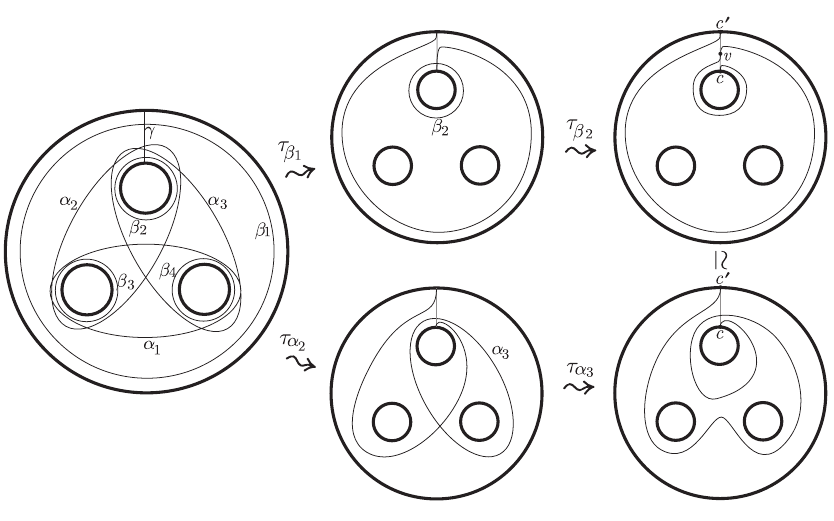}}
\caption[Lantern example]{On the left are the curves of the well-known lantern
relation on $\pg_{0,4}$ - setting $\omega_1 =  \tau_{\beta_4}\tau_{\beta_3}
\tau_{\beta_2} \tau_{\beta_1}  , \omega_2 =  \tau_{\alpha_3}
\tau_{\alpha_2} \tau_{\alpha_1}$, the lantern relation tells us that
$\omega_1, \omega_2$ are factorizations of a common $\mon \in \mcg$.
Clearly, for the arc $\gamma$ indicated, the intersection point $v \in \mon(\gamma) \cap \gamma$ in the upper right figure is fixable under $\omega_1$. However, as is clear from the lower sequence of figures, $v$ is \emph{not} fixable under the factorization $\omega_2$. Thus the right position $\pos_{\omega_1}(\gamma)$ contains the point $v$ as well as the endpoints of $\gamma$, while $\pos_{\omega_2}(\gamma)$ contains only the endpoints of $\gamma$.}
\label{fig_lantern_example}
\end{figure}

\subsection{Consistency of the associated right positions}

The goal of this subsection is to show that the right positions $\pos_\omega(\gamma_1)$ and $\pos_{\omega}(\gamma_2)$ associated by the algorithm of this section to a positive factorization $\omega$ and properly embedded arcs $\gamma_1$ and $\gamma_2$ are consistent (Definition~\ref{def_consistent}).

We require an understanding of the image set of an arbitrary point $p \in \gamma_1 \cap \mon(\gamma_2)$ under a Dehn twist $\tau_\alpha$. We begin by choosing a representative $\alpha$ of the isotopy class $[\alpha]$ which has minimal intersection with $\gamma_1$ and $ \mon(\gamma_2)$. Choose $support(D_\alpha)$ to be disjoint from $\pos$. A general observation of which we will make extensive use is that, for a fixable point $p$, the identification of $p^\alpha$ in the image is unchanged by isotopies of $\alpha$ which do not cross $p$. To be precise:

\begin{lem}\label{lem_alpha_shift}
Let $p \in \gamma_1 \cap \mon(\gamma_2)$, and $\alpha \in SCC(\pg)$, be such that $p$ is fixable under $\tau_\alpha$. If $h_t:\pg \rightarrow \pg$ is an isotopy such that $h_0 = Id$, and $h_t\alpha$ does not cross $p$, then $i_{h_1\alpha}(p)$ is also fixable, and $p^\alpha = p^{h_1\alpha}$
\end{lem}

\begin{proof}
By assumption, there is an isotopy $g_t$, supported away from $i_\alpha(p)$, such that $g_0 = Id$ and $g_1D_\alpha(\mon(\gamma_2))$ has minimal intersection with $\gamma_1$. Thus $g_t \circ D_{h_{1-t}\alpha}(\mon(\gamma_2))$ is an isotopy from $D_{h_1\alpha}(\mon(\gamma_2))$ to $g_1D_\alpha(\mon(\gamma_2))$ which fixes $i_{h_1\alpha}(p)$. The second statement, that the image is unchanged, is immediate.
\end{proof}

 Now, if $p$ is neither downward nor upward in $(\alpha,\gamma_1,\mon(\gamma_2))$, then $p$ is clearly fixable, with a unique image point. Our interest then lies in downward/upward points.

\subsubsection{The image set of a downward point} 
Let $p \in \gamma_1 \cap \mon(\gamma_2)$ be downward with respect to $\alpha$. As $p$ is a vertex of no bigon in $\gamma \cap D_\alpha((\mon(\gamma_2))$, we see that $p^\alpha$ is a unique point whose identification within the image set of $p$ will hold for any isotopy of $\alpha$ which does not involve a shift over $p$. Moreover, each such shift will change the identification, so that if $p$ is the vertex of $m$ distinct downward triangles, $i_\aclass(p)$ will contain exactly $m+1$ points (Figure~\ref{fig_rp_downward}).

\begin{figure}[h!]
\centering \scalebox{.8}{\includegraphics{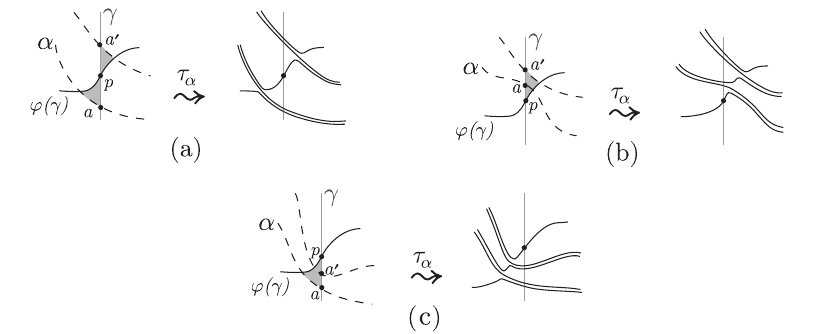}}
\caption[Downward images.]{The point $p$ is downward in 2 distinct triangles. (a), (b), and (c) show representatives of the possible shift-isotopy classes, each of which gives a distinct point $i_\alpha(p)$, while $i_{[\alpha]}(p)$ is the same for each.}
 \label{fig_rp_downward}
\end{figure}

\subsubsection{The image set of an upward point} 

For the case that $p \in \gamma_1 \cap \mon(\gamma_2)$ is upward in $(\alpha,\gamma_1,\mon(\gamma_2))$, assume for the moment that $\alpha$ has been isotoped such that the triangular region $T_p$ is embedded. Note then that $T_p$ corresponds to a bigon $R$ in the pair $\gamma_1,D_\alpha(\mon(\gamma_2))$ in which $i_\alpha(p)$ is a vertex (Figure~\ref{fig_upward_isotopy} (a),(b)). In particular, $i_\alpha(p)$ is fixable only if there is an second bigon $R'$, which intersects $R$ in the vertex which is not $i_\alpha(p)$ (Figure~\ref{fig_upward_isotopy} (c)). Observe then that $R'$ also corresponds to an upward triangle, $T_{p'}$, where $p'\in \gamma_1 \cap \mon(\gamma_2)$, such that $T_p \cap T_{p'}$ (for the given $\alpha$ in its isotopy class) consists of the two vertices along $\alpha$ (Figure~\ref{fig_upward_isotopy} (d)). We refer to such $T_{p'}$ as the \emph{pair} of $T_p$, and say that each of $T_p$ and $T'_p$ is \emph{paired}. Note that each of $i_\alpha(p)$ and $i_\alpha(p')$ are thus fixable, though not simultaneously, and have the same image. If $i_\alpha(p)$ is \emph{not} fixable, we refer to $T_p$ as \emph{unpaired}. Note further that, while the property of being paired depends on the representative $\alpha$, using Lemma~\ref{lem_alpha_shift}, a paired region on a point $p$ will remain paired for any isotopy of $\alpha$ which does not cross $p$ (though the regions may cease to be embedded).

\begin{figure}[h!]
\centering \scalebox{.9}{\includegraphics{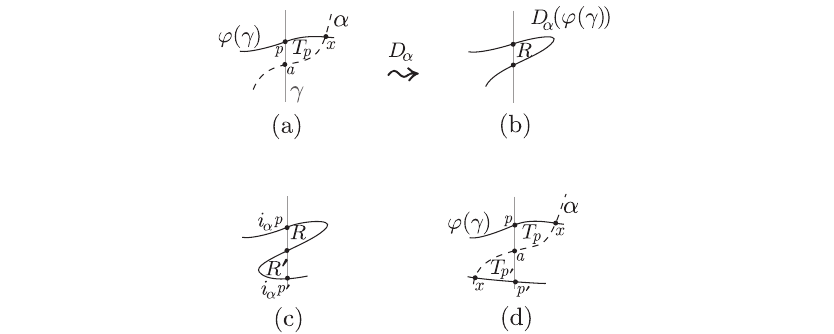}}
\caption[Downward images.]{(a) An upward point, and (b) a corresponding bigon in the image. (c) Cancelling bigons, and (d) corresponding paired regions.}
\label{fig_upward_isotopy}
\end{figure}

So, we see that for given $\alpha$, and fixable point $p$, $i_\alpha(p)$ will be fixable if and only if each upward triangle on $p$ is paired. This motivates the following definition:

\begin{defin}\label{def_nice}
Let $\{\gamma_i\}$ be a collection of properly embedded arcs in $\pg$, $\mon \in \mcg$, and $\alpha \in SCC(\pg)$. Then $\alpha$ is \emph{nice} (with respect to the arcs $\{\gamma_i\}$ and $\{\mon(\gamma_i)\}$) if, whenever $p \in \gamma_i \cap \mon(\gamma_j)$ is fixable, then $i_\alpha(p)$ is fixable under some isotopy of $D_\alpha(\mon(\gamma_j))$ which eliminates all bigons with $\{\gamma_i\}$ (Figure~\ref{fig_nice}).
\end{defin}

\begin{figure}[htb]
\centering
\includegraphics[scale=.8]{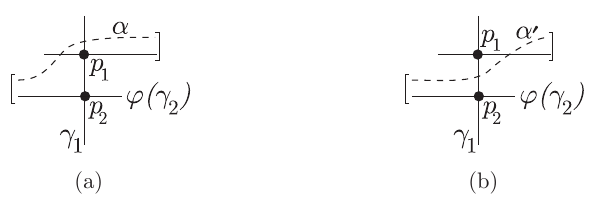}
\caption[]{(a) The curve $\alpha$ is not nice with respect to $\gamma_1$ and $\mon(\gamma_2)$, as $i_\alpha(p_1)$ is not fixable. (b) The situation is remedied by a shift over $p_1$.}
\label{fig_nice}
\end{figure}

So, if $\alpha$ is nice, then for each fixable $p$, $i_\alpha(p)$ is fixable. As it turns out, niceness it not a particularly demanding requirement. Indeed:

\begin{lem}\label{lem_nice}

Let $\alpha \in SCC(\pg)$, $\mon \in \mcg$, and $\{\gamma_i\}$ a collection of properly embedded arcs. Suppose $P \subset \{\gamma_i \cap \mon(\gamma_j)\}$ is a collection of upward, fixable points such that for each $p \in P$, $i_\alpha(p)$ is fixable. Then there is nice $\alpha' \in [\alpha]$ such that for each $p \in P$, $i_{\alpha'}(p)=i_\alpha(p)$; i.e. all identifications are preserved.
 
\end{lem}

\begin{proof}

If $\alpha$ is not nice, then there is some upward fixable point $p_1$ such that the associated triangle $T_{p_1}$ is not paired. We may assume $T_{p_1}$ is an innermost such region. Using Lemma~\ref{lem_alpha_shift}, we want to show that $T_{p_1}$ contains no upward fixable point, so that the shift over $T_{p_1}$ changes no identifications. Suppose otherwise, so there is upward fixable $p_2 \in T_{p_1}$, with associated $T_{p_2} \subset T_{p_1}$ (Figure~\ref{fig_paired}(a)); as we are assuming $T_{p_1}$ is innermost among unpaired regions, $T_{p_2}$ is paired (Figure~\ref{fig_paired}(b)). To keep track of things, we index the involved arcs such that $p_1 \in \gamma_1 \cap \mon(\gamma_2)$ and $p_2 \in \gamma_3\cap \mon(\gamma_4)$.

Let $T_{p_2'}$ be the pair of $T_{p_2}$, so $p'_2 \in \gamma_3 \cap \mon(\gamma_4)$. Now, $T_{p_2}$ is such that each of the 2 vertices along $\alpha$ of $T_{p_1}$ lies in the interior of the edge along $\alpha$ of $T_{p'_2}$ (Figure~\ref{fig_paired}(b)). Thus following along $\gamma_1$ and $\mon(\gamma_2)$ away from $p_1$ into $T_{p'_2}$, they must intersect in some point $p'_1$, thus giving a region $T_{p'_1} \subset T_{p'_2}$ which is paired with $T_{p_1}$  (Figure~\ref{fig_paired}(c)), a contradiction.

We may then shift $\alpha$ over $T_{p_1}$, and repeat the process with any remaining innermost unpaired regions to obtain nice $\alpha'$ as desired.

\end{proof}

\begin{figure}[htb]
\centering
\includegraphics[scale=.8]{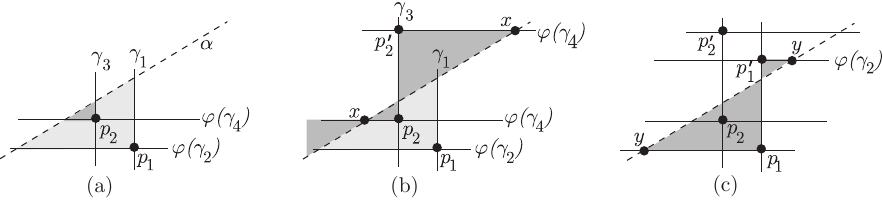}
\caption[]{(a) Region $T_{p_1}$ is lightly shaded, $T_{p_2}$ is dark. (b) The pair of regions $T_{p_2}$ and $T_{p_2'}$ are darkly shaded. As usual, the figures are in the universal cover, with all lifts of a given object given the same label as the object. (c) The region pair  $T_{p_1}$ and $T_{p'_1}$ is shaded.}
\label{fig_paired}
\end{figure}

A further notion we will require is that of a \emph{reflection}:

\begin{remark}\label{rem_reflection}
Given $\obd$ such that $\mon$ has factorization $\fac$, consider $(\refpg,\refmon)$, where $\refpg$ is gotten by reversing the orientation of $\pg$, and $\refmon$ has factorization $\reffac$. We refer to $(\refpg,\refmon)$ as the \emph{reflection} of $\obd$. Note that, e.g. if $p\in \gamma_1 \cap \mon(\gamma_2)$ is upward (for some $\alpha$), then its reflection is downward. Furthermore, given $p \in \gamma_1 \cap \mon(\gamma_2)$ and $q \in \gamma_1 \cap \tau_\alpha(\mon(\gamma_2))$, we have  $q \in i_\aclass(p)$ if and only if the reflection of $p$ is in the $\alpha-$image set of the reflection of $q$ (Figure~\ref{fig_reflections}(a)). Finally, if we have right positions $\pos_i$ for some collection $\{(\mon,\gamma_i)\}$, and associated initially parallel regions $\R(\pos_j,\pos_k)$ for some $j$ and $k$, we obtain a collection of regions in the reflection by simply switching \cpoints and \dpoints; i.e. if $p$ is an \dpoint (\cpoint) of a region, its reflection is a \cpoint (\dpoint). We denote the reflection of a region $B$ by $\refB$, and decorate the vertices accordingly (Figure~\ref{fig_reflections}(b)).  
\end{remark}

\begin{figure}[htb]
\centering
\includegraphics[scale=1.3]{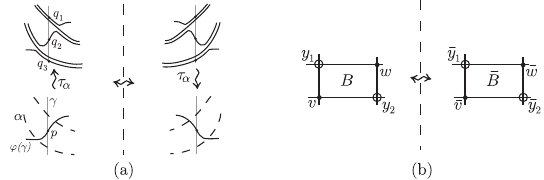}
\caption[]{(a) A point $p \in \gamma \cap \mon(\gamma)$, its image set $i_\aclass(p)$, and the reflections of each of these. (b) Reflecting a region.}
\label{fig_reflections}
\end{figure}

\begin{defin}\label{def_alpha_equiv}

Motivated by the above, given nice $\alpha$, we introduce an equivalence relation $\aequiv$ on intersection points $\gamma_1 \cap \mon(\gamma_2)$, where $p \aequiv p' \Leftrightarrow p^\alpha = p'^\alpha$ (Figure~\ref{fig_alpha_equiv}). Associated to each such equivalence class $[p]_\alpha$ are then the unique minimal connected sub-arcs of $\gamma_1$ and $\mon(\gamma_2)$ which contain each point of the class; we refer to these as the \emph{vertical} (along $\gamma_1$) and \emph{horizontal} (along $\mon(\gamma_2)$) spans of the class.  Observe that the image under $D_\alpha$  of the horizontal span is entirely contained in the boundary of a bigon chain of even length in $D_\alpha(\mon(\gamma_2))$ and $\gamma_1$.

\end{defin}

\begin{figure}[htb]
\centering
\includegraphics[scale=1.9]{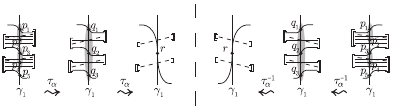}
\caption[]{Equivalence classes are shaded. On the left side, we have $i_\alpha(p_1)=i_\alpha(p_2) = q_1, i_\alpha(p_3)=i_\alpha(p_4) = q_2$, and $i_\alpha(p_5) = q_3$, while $i_\alpha(q_i)=r$ for each $i$. On the right side, their reflections.}
\label{fig_alpha_equiv}
\end{figure}

\begin{lem}\label{lem_cycle_config} For an arc $\sigma$, and points $a,b \in \sigma$, let $[a,b]_\sigma$ denote the (closed) segment of $\sigma$ from $a$ to $b$.
Suppose $p,p' \in \gamma_1 \cap \mon(\gamma_2)$ are consecutive (along $\gamma_1$) elements of the equivalence class $[p]_\alpha$. Then nice $\alpha$ intersects each of $[p,p']_{\gamma_1}$ and $[p,p']_{\mon(\gamma_2)}$ exactly once. 
\end{lem}

\begin{proof}
As $p$ is fixable, we may isotope $\alpha$ to $\alpha'$, such that there are paired, in particular embedded, triangles $T_p$ and $T_{p'}$ on the given points. Thus $\alpha'$ satisfies the desired intersection conditions, and by Lemma~\ref{lem_nice}, may be assumed nice. In particular, $p \stackrel{\alpha'}{\sim} p'$. Now, the isotopy $\alpha \leadsto \alpha'$ can involve a shift over at most one of $T_p$ and $T_{p'}$, but either of these will change the image under $i_{\alpha}$ of exactly one of the points, so that $p$ and $p'$ are no longer in the same equivalence class, a contradiction. So the isotopy crosses neither point, and so $\alpha$ satisfies the same intersection conditions.
\end{proof}

Now, the Lemma~\ref{lem_cycle_config} implies in particular that, if $p,p' \in \gamma_1 \cap \mon(\gamma_2)$ are consecutive (along $\gamma_1$) elements of the equivalence class $[p]_\alpha$, then the closed curve obtained as the union of $[p,p']_{\gamma_1}$ and $[p,p']_{\mon(\gamma_2)}$ along the endpoints is isotopic to $\alpha$ through an isotopy which crosses neither $p$ nor $p'$. Suppose then that there is a further point $p''$ such that the union of $[p',p'']_{\gamma_1}$ and $[p',p'']_{\mon(\gamma_2)}$ along the endpoints is a (nice) curve $\alpha'$ again isotopic to $\alpha$. Then either $\alpha' \cap [p,p']_{\gamma_1} \neq \emptyset$, so that $p'' \in [p]_\alpha$, or otherwise, so there is an isotopy $\alpha \leadsto \alpha'$ given by a shift over a triangular region with vertex $p'$. Using this observation, we have:

\begin{lem}\label{lem_cycles}
If $p \in \gamma_1 \cap \mon(\gamma_2)$, and $\alpha$ a nice representative of $[\alpha]$, are such that $[p]_\alpha$ contains greater than 2 elements, then any nice $\alpha' \in [\alpha]$ is isotopic to $\alpha$ through an isotopy which intersects no element of $[p]_\alpha$; i.e. $[p]_\alpha = [p]_{\alpha'}$.
\end{lem}

\begin{flushright}$\square$

\end{flushright}

As an example of Lemma~\ref{lem_cycles}, consider again Figure~\ref{fig_alpha_equiv}. Observe that $\alpha$ may be isotoped over $p_2$ and $p_4$ to a nice representative $\alpha'$, thus changing the equivalence relation to $p_2 \sim p_3$ and $p_4 \sim p_5$. In the second picture (from either side), however, there can be no nice representative of $\alpha$ which does not preserve $q_1 \sim q_2 \sim q_3$.

\begin{defin}\label{def_collapsed}
Let $B \in \R(\pos_1,\pos_2)$ and $\alpha \in SCC(\pg)$ a fixed, nice, representative of its isotopy class. We say $B$ is \emph{collapsed} (by $\alpha$) if (1) each edge is contained in the span of the $\alpha-$equivalence class of its \cpoint vertex, and (2) $\alpha \cap B \neq \emptyset$. Similarly, a region $\preB \in \R(\pos^\alpha_1,\pos^\alpha_2)$ is \emph{created} (by $\alpha$) if its reflection is collapsed (Figure~\ref{fig_collapsed_created}).

\end{defin}

\begin{figure}[htb]
\centering
\includegraphics[scale=1.7]{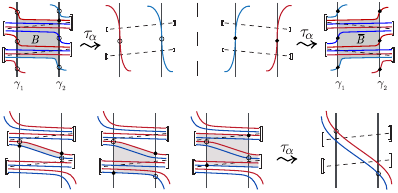}
\caption[]{Above: To the left, a collapsed region $B$, and its (trivial) image. To the right, the reflection of $B$ is a created region $\refB$. Below: The three figures to the left are various regions in a common configuration, which is mapped by $\tau_\alpha$ to the right-most figure. From left to right, these 3 regions are non-collapsed (satisfies condition (1) but not condition (2)), non-collapsed (satisfies condition (2) but not condition (1)), and collapsed (but not embedded)}
\label{fig_collapsed_created}
\end{figure}

We have:
 
\begin{lem}\label{lem_collapsed}
Suppose $B \in \R(\pos_1,\pos_2)$ (not necessarily consistent) is collapsed by some $\alpha$. Then there is $B^c \in \R(\pos_1,\pos_2)$ with the same \dpoints as $B$, and opposite orientation. Similarly, if $\preB \in \R(\pos^\alpha_1,\pos^\alpha_2)$ is created by $\alpha$ then there is ${\preB}^c \in \R(\pos^\alpha_1,\pos^\alpha_2)$ with the same \cpoints as $\preB$, and opposite orientation.
\end{lem}

\begin{proof}
We begin with the first statement. We will find it convenient to label the edges of $B$ as $e_i$, $i=1,2,3,4$, where $e_1$ denotes the edge along $\mon(\gamma_2)$, $e_1$ the edge along $\mon(\gamma_2)$, and for $i=3,4$, $e_{i}$ is the edge along $\gamma_{i-2}$. We begin by assigning a \emph{length} $l(B) = \#|[y_1]_\alpha \cap e_1 |$ and \emph{width} $w(B) = \#|[y_1]_\alpha \cap e_3 |$ (so, e.g., the region $B$ in the upper left corner of Figure~\ref{fig_collapsed_created} has length 1, width 2, while the collapsed but not embedded example on the lower row has length=width=1). Furthermore, we will order points along a given oriented arc in accordance with that orientation, so if $p$ and $p'$ are points along oriented arc $\sigma$, then $p' >_\sigma p$ means that the orientation of $\sigma$ points from $p$ to $p'$. 

We first consider the case $l=w=1$, and orient all arcs such that $\partial B$ has the standard (counterclockwise) orientation (Figure~\ref{fig_collapsed_lemma}(a)). By the collapsed condition, $[y_1]_\alpha$ contains at least one element greater than $v$ along $\gamma_1$; let $y_1'$ denote the minimal such element. As $w=l=1$, we see that $y'_1$ and $y_1$ are consecutive in the class, and so $v \in [y_1,y'_1]_{\gamma_1}$, and $w \in [y_1,y'_1]_{\mon(\gamma_2)}$. In particular, $y_2$ is a vertex of an embedded upward triangular region $T_{y_2}$; by niceness of $\alpha$ there is then $y'_2$ and embedded pair $T_{y'_2}$. Furthermore, $v \in [y_2,y'_2]_{\mon(\gamma_1)}$, and $w \in [y_2,y'_2]_{\gamma_2}$. Now, observe that, by isotoping $\alpha$ to pass through each \dpoint, we may realize $B$ as $T_{y_1} \cup T_{y_2}$. But then the union of the complementary regions $T_{y'_1}$ and $T_{y'_2}$ give a region $B^c$ as desired.

\begin{figure}[htb]
\centering
\includegraphics[scale=1.9]{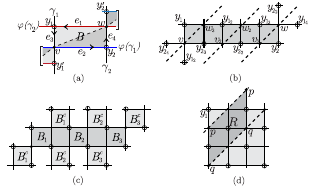}
\caption[]{}
\label{fig_collapsed_lemma}
\end{figure}

Consider then the case $l(B)>1, w(B)=1$ (Figure~\ref{fig_collapsed_lemma}(b)). Let $y_{1_i} \in [y_1]_\alpha$, $i=1,\ldots,l+1$, be consecutive elements of the class, with $y_{1_1}=y_1$, and $y_{1_2} \in e_1$. As $w=1$, we have $y_{1_2} >_{\gamma_1} v$. Now, each $y_{1_i}$, $1 \leq i \leq l$, is the endpoint of a segment of $\gamma_1$ which cuts across $B$; let $v_i$ denote the opposite (on $e_2$) endpoint of this segment (so $v_1 = v$). For notational convenience, we augment $\pos_1$ to $\pos'_1$ by including the $v_i$. Similarly, we have $y_{2_i} \in [y_1]_\alpha$, $w_i$ along $e_2$, and $\pos'_2 = \pos_2 \cup \{w_i\}$. There are then $B_i \in \R(\pos_1',\pos_2')$, $i=1,\ldots,l$, where each $B_i$ is a subregion of $B$ with vertices $y_{1_i},v_i,y_{2_{l+1-i}}$ and $w_{l+1-i}$ (Figure~\ref{fig_collapsed_lemma}(c)). In particular, each $B_i$ is collapsed, and $l(B_i)=w(B_i)=1$, so by the previous paragraph there are complementary regions $B^c_i$. But then the union of the regions $B^c_i$ with the complement in $B$ of the $B_i$ is a region $B^c$ as desired. Observe that $w(B^c)=l(B)$, and $w(B)=l(B^c)$; in particular, the argument for the case $w(B)>1, l(B)=1$ is identical, with the roles of the $B_i$ and $B_i^c$ swapped.

Finally, suppose both $l(B)<1$ and $w(B)<1$. Let $p$ denote the (unique) intersection of $\alpha$ with $[y_1,y_{1_2}]_{\gamma_1}$, $q$ the intersection with $[y_{1_2},y_{1_3}]_{\gamma_1}$. Then, lifting to the universal cover (Figure~\ref{fig_collapsed_lemma}(d)), we find a rectangular region $R$ bounded by $\rho^{-1}(\alpha)$ and $\rho^{-1}(\gamma_1)$, and whose vertices are consecutive (along $\rho^{-1}(\alpha)$) lifts of $p$ and $q$ (where $\rho: \w{\pg} \mapsto \pg$ is the covering map). But then $\rho(R)$ is an annulus in $\pg$ bounded by two copies of $\alpha$, a contradiction.

The second statement then follows immediately by reflection: if $B$ is created by $\alpha$, it is the reflection of a collapsed (by $\alpha$) region $\refB$. There is thus $\refB^c$ as above, whose reflection $B^c$ therefore shares the \cpoints of $B$, and has the opposite orientation, as desired.

\end{proof}

\subsubsection{Consistency} 
We now have the necessary tools and terminology to demonstrate Theorem~\ref{thm_rp}. As such, let  $\gamma_1$ and $\gamma_2$ be properly embedded arcs in $\pg$, $\mon \in \mcg$, with right positions $\pos_1$ and $\pos_2$. Our goal then is to show that, if each region $B \in \R(\pos_1,\pos_2)$ is completed, the same is true of each region in $\R(\pos^\alpha_1,\pos^\alpha_2)$.

Many of the arguments of this subsection will involve keeping track of equivalence classes of upward vertices. An observation which will prove particularly useful throughout is the following:

\begin{lem}\label{lem_rectangle}
Given $\obd$, and $\alpha \in SCC(\pg)$, let $\{\gamma_i\}$ be a collection of properly embedded arcs in $\pg$. Suppose $p,p' \in \gamma_j \cap \mon(\gamma_k)$ are consecutive in $[p]_\alpha$, and further that $q,q' \in \gamma_l \cap \mon(\gamma_m)$ are such that $p,p',q$ and $q'$ are the vertices of a rectangular region (so either $j=l$, or $k=m$). Then there is $\alpha' \in [\alpha]$ such that $q$ and $q'$ are consecutive in $[q]_{\alpha'}$.
\end{lem}

\begin{proof}
While this is obvious, the point of view which makes it so perhaps bears reinforcement. As such, consider the case $k=m$ (so the edges $[p,p']$ and $[q,q']$ of the given rectangular region are along elements of $\{\gamma_i\}$). Now, as $p,p'$ are consecutive in $[p]_\alpha$, we have a pair of upward triangles $T_p$ and $T_{p'}$, which have in common 2 vertices, one along $\gamma_j$, and one, which we label $x$, along $\mon(\gamma_k)$. As usual, we picture the setup in the universal cover (Figure~\ref{fig_rectangle} (a)). Expanding our view of the figure in the universal cover to include 2 copies of our original figure (Figure~\ref{fig_rectangle} (b)), we find that $q$ and $q'$ are themselves vertices in upward triangular regions, and that using these regions $\alpha$ may be isotoped to some nice $\alpha'$ for which $q$ and $q'$ are consecutive in $[q]_{\alpha'}$ (Figure~\ref{fig_rectangle} (c)). The modifications necessary for the case $j=l$ are clear (Figure \ref{fig_rectangle} (d)).

\end{proof}

\begin{figure}[htb]
\centering
\includegraphics[scale=1.1]{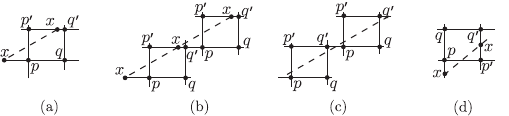}
\caption[]{Figures for Lemma~\ref{lem_rectangle}.}
\label{fig_rectangle}
\end{figure}

Now, recall that, as in the discussion at the beginning of the previous sub-section, if some intersection $p \in \gamma_1 \cap \mon(\gamma_2)$ is both upward and fixable for a given $\alpha$, then there is some $p' \in [p]_\alpha$, and upward triangular regions $T_p$ and $T_{p'}$ such that the entirety of $\alpha$ is contained in their edges; it follows that in this case every triangular region of $(\alpha,\gamma_i,\mon(\gamma_j))$, $i,j \in \{1,2\}$ is upward. As such, we refer to a region with fixable vertices as \emph{upward} if some vertex is in an upward triangular region.

Our argument depends on being able to consider a given region of $\R(\pos^\alpha_1,\pos^\alpha_2)$ as the `image' under $\tau_\alpha$ of a region in (possibly an augmentation of) $\R(\pos_1,\pos_2)$. Now, it is clear intuitively that a region should not have an image if it is collapsed, or if its vertices are not all fixable under $\tau_\alpha$. We will further impose a minimality condition, to obtain:

\begin{defin}
Given $(\pg,\mon)$, and a pair $\gamma_1$ and $\gamma_2$ of disjoint, properly embedded arcs in $\pg$, suppose $\pos_i$, $i=1,2$ are right positions for $(\mon, \gamma_i)$, and $\alpha \in SCC(\pg)$. Let $B \in \R(\pos_1,\pos_2)$ be non-collapsed (for $\alpha$), and have fixable \cpoints. Then $B$ is a \emph{preimage for $i_\alpha$} if 

\begin{enumerate}

\item $B$ is not upward, and no downward triangular region on either \dpoint is contained in $B$, or

\item $B$ is upward, and for $i=1,2$, the equivalence class $[y_i]_\alpha$ contains no point in the interior of an edge of $B$.

\end{enumerate}
\end{defin}

\begin{defin}
Given  $B \in \R(\pos_1,\pos_2)$, we refer to any region of $\R(\pos^\alpha_1,\pos^\alpha_2)$ whose vertices are each in the respective image sets of the vertices of $B$ as an \emph{image} of $B$.

\end{defin}

As some justification for this terminology, we have:

\begin{lem}\label{lem_region_image}
Let $B \in \R(\pos_1,\pos_2)$ be a preimage for $i_\alpha$. Then there is an image $\preB$ of $B$, such that $i_\alpha$ maps the \cpoints of $B$ to those of $\preB$.

\end{lem}
\begin{proof}
To simplify the picture, let $\w{B}$ be a lift of $B$ to the universal cover $\w{\pg}$, and $\{\w{\alpha}_i\}$ the connected components of $\rho^{-1}(\alpha)$ (where $\rho:\w{\pg} \rightarrow \pg$ is the covering map). The lifted map $\w{D_\alpha}$ has disjoint support, each connected component of which contains some $\w{\alpha}_i$; let $D_{\w{\alpha}_i}$ denote the restriction of $\w{D_\alpha}$ to this component of the support. Now, a given $D_{\w{\alpha}_i}$ affects $\w{B}$ non-trivially only if $\w{\alpha}_i$ has non-empty intersection with at least one of the edges $\w{e_1}$ and $\w{e_2}$ (recall that $e_1$ denotes the edge of $B$ along $\mon(\gamma_2)$, $e_2$ the edge along $\mon(\gamma_1)$). These $\w{\alpha}_i$ can then be distinguished as those which intersect exactly one of the edges $\w{e_1}$ and $\w{e_2}$, which we refer to as a \emph{corner} intersection, and those which intersect both edges, which we refer to as a \emph{vertical} intersection. 

Now, if $\w{\alpha}_i$ has a vertical intersection, $D_{\w{\alpha}_i}$ will slice each lift of $B$, and reattach to form new regions whose vertices map to the correct vertices under $\rho$ (Figure~\ref{fig_upward}(a)). We are then done as long as no vertex is contained in a bigon in the resulting configuration. This however follows easily from our assumptions: Suppose otherwise, so $\w{\alpha}_i \cap \w{B}$ is contained in an edge of an upward triangular region, say on $\w{y}_1$. Then for fixability there is $y'_1 \in [y_1]_\alpha$, and by condition (2) we have each edge with endpoint $y_1$ contained in its span (Figure~\ref{fig_upward}(b)). Thus $B$ is collapsed, a contradiction.

\begin{figure}[h!]
\centering \scalebox{1.1}{\includegraphics{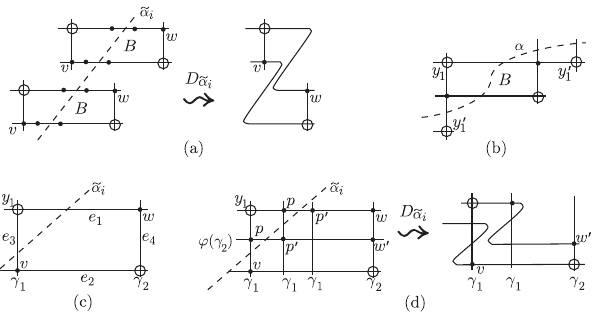}}
\caption{(a): A vertical intersection and the effect of the corresponding $D_{\w{\alpha}_i}$ on the lifted region. (b) If a vertical segment of $\alpha \cap B$ is contained in an upward triangle, then $B$ cannot be a pre-image. (c) A corner intersection, and (d) the effect of $D_{\w{\alpha}_i}$}.
\label{fig_upward}
\end{figure}

Consider then an (upward) corner intersection; such $\w{\alpha}_i$ is upward on a \cpoint, say $\w{y_1}$ (Figure~\ref{fig_upward}(c)). As $y_1$ is fixable, and $B$ not collapsed, there are then points $p, p' \in \gamma_1 \cap e_1$, $p<_{\gamma_1}p'$ consecutive in $[p]_\alpha$, such that $\w{\alpha} \cap [\w{p},\w{p'}]_{\w{e_1}}$ is a single point. As $\w{\alpha}_i$ exits $\w{B}$ through $\w{e_3}$, the point $p$ also lies along $e_3$ (i.e. the edge of $B$ contained in $\gamma_1$) (Figure~\ref{fig_upward}(d)). Now, travelling along $\w{e_1}$ from $\w{y_1}$, turning right onto $\w{\alpha}_i$ and travelling exactly one length of $\alpha$, one thus arrives at a second lift of $\mon(\gamma_2)$, at a point in the interior of $\w{B}$. Letting $\w{w'}$ denote the intersection of this lift with $\w{\gamma_2}$, by Lemma~\ref{lem_rectangle} we have $w' := \rho(\w{w'}) \in [w]_{\alpha'}$. In particular, $i_\alpha(w') \in \pos^\alpha_2$. The map $D_{\w{\alpha}_i}$ cuts each component of $\rho^{-1}(\mon(\gamma_2))$ at its intersection with $\w{\alpha_i}$, and reattaches by inserting copies of (a fundamental domain of) $\alpha$. So, after the cutting, $\w{\mon(\gamma_2)}$ is attached to the lift of $\mon(\gamma_2)$ through $\w{w'}$ via a path in $\w{B}$.

Thus, after performing $\w{D_\alpha}$, we have a region $\w{\preB}$ such that all bigons on the vertices are exterior to the region, and contain no other vertices. We may therefore isotope the $\w{D_\alpha}\rho^{-1}(\mon(\gamma_i))$ so as to eliminate bigons, and preserve $\w{\preB}$. As all isotopic bigon free configurations are isotopic through bigon free configurations, the result is thus equivalent to $\rho^{-1}\tau_\alpha(\mon(\gamma_i))$, and so $\rho(\w{\preB}) = \preB \in \R(\pos^\alpha_1,\pos^\alpha_2)$ is a region as desired.

\end{proof}

More generally, we would like to be able to find an image for \emph{any} non-collapsed region with fixable \cpoints. We break the argument into two lemmas, for the cases of upward and non-upward regions.

\begin{lem}\label{lem_upward}
Let $\pos_1$ and $\pos_2$ be right positions. Suppose $B \in \R(\pos_1,\pos_2)$ is a non-collapsed upward region with fixable \cpoints $y_1$ and $y_2$, $\alpha \in SCC(\pg)$, and $\hat{y}_i \in i_\aclass (y_i)$ for $i=1,2$. Then $\hat{y}_1$ and $\hat{y}_2$ are the \cpoints of an image of $B$.
\end{lem}

\begin{proof}
 Let $\pos'_i$ denote the maximal right position which both contains $\pos_i$, and is such that $(\pos'_i)^\alpha = \pos^\alpha_i$. We will show that $\R(\pos'_1,\pos'_2)$ contains a preimage for $i_\alpha$ which maps to the desired region.

By assumption, at least one of the \cpoints, say $y_1$, is upward. We then may fix nice $\alpha$ such that $y_1^\alpha = \hat{y}_1$ and $[y_1]_\alpha$ contains more than one element.

Let $S'$ denote the set of regions of $\R(\pos'_1,\pos'_2)$ whose orientations match that of $B$, and whose \cpoints are \emph{simultaneously} fixable to  to the given $\hat{y}_1$ and $\hat{y}_2$; i.e such that, for some $\alpha' \in [\alpha]$, $i_{\alpha'}$ sends the \cpoints to $\hat{y}_1$ and $\hat{y}_2$. We will show firstly that $S'$ contains a non-collapsed element.

Suppose then that $B \not\in S'$; i.e. for $i=1,2$, the sets $\{\alpha \in [\alpha] \ | \ y_i^\alpha = \hat{y}_i \}$ are disjoint. Thus by Lemma~\ref{lem_cycles}, $[y_1]_\alpha$ has exactly two elements: $y_1$, and a point we label $y'_1$. We denote the upward triangular region on $y_1$ by $T_{y_1}$. Then, as $y_2$ is also fixable, there is upward triangle $T_{y_2}$ and shift isotopy $h_t^{T_{y_2}}$ such that $y_2^{\alpha'} = \hat{y_2}$, where $\alpha'=h_1^{T_{y_2}}\alpha$ (Figure~\ref{fig_s'_a}(a)). As $B \not\in S'$, this isotopy must cross $y_1$, so $y_1$ lies in the interior of $T_{y_2}$. Again by Lemma~\ref{lem_cycles}, the class $[y_2]_{\alpha'}$ also contains exactly 2 points: $y_2$, and a point we label $y'_2$. The points $y_1'$ and $y_2'$ thus lie in the exterior of $T_{y_2}$, so ${y'}_i^{\alpha'} = {y'}_i^{\alpha}=\hat{y}_i$ for either $i$. Now, consider a neighborhood of $y_1$ chosen small enough such that its intersection with $\gamma_1 \cup \mon(\gamma_2)$ is a pair of arcs intersecting at $y_1$, and let $T_\epsilon$ denote the intersection of this neighborhood with $T_{y_1}$. We have two cases:

\begin{enumerate}

\item $T_\epsilon \not\subset B$ (Figure~\ref{fig_s'_a}(b)). Pushing to the universal cover, let $\w{\mon(\gamma_2)}'$ denote the lift of $\mon(\gamma_2)$ through $\w{y}'_1$; this arc enters $\w{T}_{y_2}$ at its intersection with $\w{\alpha}$, so continues to intersect  $\w{\gamma_2}$ at a point we denote $\w{w}'$. Note then that, using Lemma~\ref{lem_rectangle}, $w' := \rho(\w{w}')$ is in $\pos'_2$. Letting $B_1$ denote the region obtained by extending $B$ along the $\gamma_i$ from $w$ and $y_1$ to $w'$ and $y_1'$, and isotoping $\alpha$ to $\alpha'$, we have $B_1 \in S'$ (Figure~\ref{fig_s'_a}(c)). As no edge is contained in the span of the equivalence class of its \cpoint, $B_1$ is not collapsed, as desired. 

\begin{figure}[htb]
\centering
\includegraphics[scale=1.1]{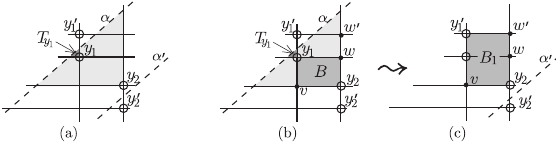}
\caption[]{Figures for Lemma~\ref{lem_upward}. The region $T_{y_2}$ is lightly shaded in each.}
\label{fig_s'_a}
\end{figure}

\item $T_\epsilon \subset B$ (Figure~\ref{fig_s'_b} (a)). As $T_{y_1}$ does not contain $y_2$, it follows that $\alpha \cap B \neq \emptyset$. Note that if $y_1'$ lies along neither $e_1$ nor $e_3$, then each of these is contained in the span of $[y_1]_\alpha$ (Figure~\ref{fig_s'_b} (b)). But then as $\alpha$ may be assumed nice, each of $e_2$ and $e_4$ is contained in the span of $[y_2]_\alpha$, so $B$ is collapsed, a contradiction. 

Thus $y_1'$ lies along $e_1$ or $e_3$ (Figure~\ref{fig_s'_b} (c), (d)). Then, again using Lemma~\ref{lem_rectangle}, $y_1'$ is the \cpoint of a subregion $B_1$ in $S'$, whose edge along $\mon(\gamma_2)$ is not contained in the span of $[y_1']_\alpha$, so $B_1$ is not collapsed (of course if $y_1'$ lies in $e_1 \cap e_3$ we have 2 such subregions, either of which serves our purpose). Furthermore, as $y'_1$ does not lie in $T_{y_2}$, we may shift $\alpha$ over $T_{y_2}$ to realize $B_1$ in $S'$.

\begin{figure}[htb]
\centering
\includegraphics[scale=1.1]{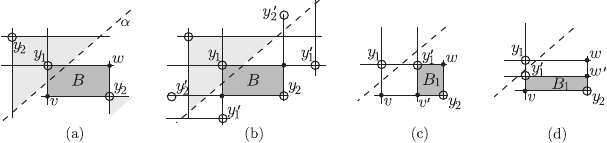}
\caption[]{Figures for Lemma~\ref{lem_upward}, case (2).}
\label{fig_s'_b}
\end{figure}

\end{enumerate}

Finally, let $B_2 \in S'$ be minimal among non-collapsed regions, in the sense that it contains no other. Observe then that no element of $[y_i]_\alpha$, for either $i$, lies in the interior of an edge of $B_2$. Indeed, suppose otherwise, so that e.g. $y'_1 \in [y_1]_\alpha$ is such that $y'_1$ lies along $e_1$ (the other cases are essentially identical). Then $y'_1$ is the endpoint of a segment of $\gamma_1 \cap B$ which cuts across $B_2$, and whose other endpoint lies in $\pos'_1$, thus determining a sub-region. Such a subregion will again be non-collapsed, contradicting minimality.

The region $B_2$ is thus a preimage whose \cpoints map to the given $\hat{y}_1$ and $\hat{y}_2$. By Lemma~\ref{lem_region_image} these are then the \cpoints of an image of $B$ as desired.

\end{proof}

We require a similar statement for the non-upward case: 

\begin{lem}\label{lem_downward}
Let $B \in \R(\pos_1,\pos_2)$ be a non-upward region with \cpoints $y_1$ and $y_2$, $\alpha \in SCC(\pg)$, and let $\hat{y}_i \in i_\aclass (y_i)$ for $i=1,2$. Then $\hat{y}_1$ and $\hat{y}_2$ are the \cpoints of an image of $B$.
\end{lem}

\begin{proof}

Ideally, one would like to find $\alpha' \in [\alpha]$ such that $B$ is a preimage for $i_{\alpha'}$ which maps to the desired region. Supposing for the moment that this is possible, we will break the isotopy $\alpha \leadsto \alpha'$ into a sequence of shifts, as follows: if  $y_i^{\alpha} \neq \hat{y}_i$ for either $i$, there is a unique triangular region $T_i$ such that $y_1^{h_1^{T_i}(\alpha)}=\hat{y}_i$ (Figure~\ref{fig_independent_triangles}(a)). Assuming these shifts may be performed simultaneously, we obtain a curve $\alpha''$ (Figure~\ref{fig_independent_triangles}(b)) which gives the correct \cpoint images. Now, for each \dpoint, let $T_3$ (for $v$) and $T_4$ (for $w$) denote the maximal triangular region contained in $B$ and with vertex the given \dpoint. Isotoping over each of these, we then obtain $\alpha'$ with only vertical intersections with $B$, so $B$ is a preimage with image $B^{\alpha'}$ satisfying the desired properties (Figure~\ref{fig_independent_triangles}(c)).

\begin{figure}[htb]
\centering
\includegraphics[scale=1.4]{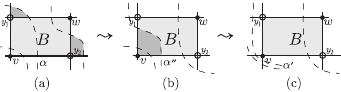}
\caption[]{Shift-isotoping $\alpha$ so that $B$ is the prescribed pre-image.}
\label{fig_independent_triangles}
\end{figure}

Of course these shifts are not in general simultaneously realizable; we may however get around this by passing to the universal cover. To see this,  let $\w{B}$ be a lift of $B$ to the universal cover, and $\w{T_i}$, for each $i$, the lift of $T_i$ (defined in the previous paragraph) with a vertex in common with $\w{B}$. Of course we may always assume $\alpha$ has been isotoped over $T_1$. But then, as $\w{T_2}$ has no interior intersection with $\w{B}$, we may isotope $\rho^{-1}\alpha$ over $\w{T_2}$ without crossing $\w{y_1}$. Similarly, any downward region contained in $\w{B}$ contains neither $\w{y_i}$, while any pair of such regions is either disjoint, or such that one contains the other. In particular, the composition $h_t^{\mathcal{\w{T}}} := h_t^{\mathcal{\w{T}}_4} h_t^{\mathcal{\w{T}}_3}h_t^{\mathcal{\w{T}}_2}$ isotopes $\rho^{-1}\alpha$ to a representative $(\rho^{-1}\alpha)'$ with the desired properties.

\begin{figure}[htb]
\centering
\includegraphics[scale=1.1]{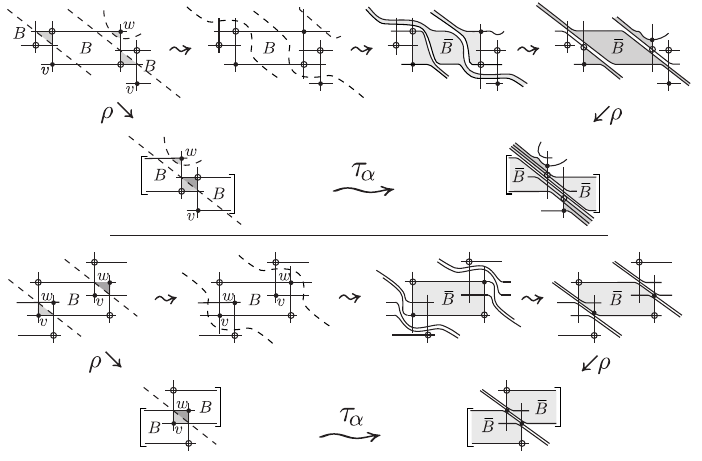}
\caption[]{Examples of the steps outlined in Lemma \ref{lem_downward}. In each of the two figures, the lower left illustration is of some collection of triangular regions in $\pg$ whose shifts cannot be done simultaneously, while each upper row is in the universal cover, and the squiggly arrows refer to, from left to right, the isotopy $(\rho^{-1}\alpha) \leadsto (\rho^{-1}\alpha)'$, the map $D_{h^{\mathcal{\w{T}}}_t(\rho^{-1} \alpha)}$ applied to the the $\rho^{-1}(\mon(\gamma_j))$, and finally the isotopy of these images back to the $\rho^{-1}(\tau_\alpha(\mon(\gamma_j)))$}.
\label{fig_downward}
\end{figure}

We then define $D_{h^{\mathcal{\w{T}}}_t(\rho^{-1} \alpha)}:\w{\pg} \rightarrow \w{\pg}$, as the composition of the $D_{h_t^{\w{T}}\w{\alpha}_j} := h_t^{\w{T}}\w{D_{\alpha_j}}(h_{t}^{\w{T}})^{-1}$ for each connected component $\w{\alpha}_j$ of $\rho^{-1} \alpha$. The map $D_{h^{\mathcal{\w{T}}}_1(\rho^{-1} \alpha)}$ is thus isotopic to the lift $\w{D_\alpha}$, and fixes each $\w{y_i}$ to a lift of $\prey_i$ (recall that intersection points of $\gamma_i \cap \mon(\gamma_j)$ are only considered up to isotopy of $\mon(\gamma_j)$ which involves no bigons). As all intersections with $\w{B}$ are vertical, $\w{B}$ is a preimage, and in particular the $\w{\prey_i}$ bound a region of $\R(\rho^{-1}(\pos^\alpha_1),\rho^{-1}(\pos^\alpha_2))$, which is then mapped by $\rho$ to a region of $\R(\pos^\alpha_1,\pos^\alpha_2)$ as desired. As an illustration, Figure~\ref{fig_downward} goes through the steps for 2 particular configurations.

\end{proof}

We arrive finally at the proof of Theorem~\ref{thm_rp}:

\begin{proof}(of Theorem~\ref{thm_rp})
Let $\gamma_1$ and $\gamma_2$ be disjoint properly embedded arcs in $\pg$, and $\mon \in \mcg$ admit a positive factorization $\fac$. We will show by induction on $n$ that, if $\pos^j_k$ denotes the right position associated to $(\gamma_k,\tau_{\alpha_j} \cdots \tau_{\alpha_1})$ by the algorithm of this section, then $\pos^j_1$ and $\pos^j_2$ are consistent for each $j$. For the base case $n=1$, observe that each region of $\R(\pos^1_1,\pos^1_2)$ is created, so completed by Lemma~\ref{lem_collapsed}. Suppose then that $\pos^{j-1}_1$ and $\pos^{j-1}_2$ are consistent, and let $B^\alpha \in \R(\pos^j_1,\pos^j_2)$. Now, if $B^\alpha$ is created by $\alpha$, then again by Lemma~\ref{lem_collapsed} it is completed, so we assume otherwise. Letting $v^\alpha$ and $w^\alpha$ denote the \dpoints of $B^\alpha$, by definition there are $v \in \pos^{j-1}_1$ and $w \in \pos^{j-1}_2$ such that $v^\alpha \in i_{[\alpha]}(v)$, and $w^\alpha \in i_{[\alpha]}(w)$. Let $\refy_1,\refy_1^\alpha,\refy_2$ and $\refy_2^\alpha$ denote the reflections of $v,w,v^\alpha$ and $w^\alpha$, respectively (Figure~\ref{fig_reflected_regions}(a)). We then have $\refy_i \in i_{[\alpha]}(\refy^\alpha)$, for $i=1,2$ (see Remark~\ref{rem_reflection}). Then, as $\refy_1^\alpha$ and $\refy_2^\alpha$ are the \cpoints of the reflection $\refB^\alpha$ of $B^\alpha$, Lemma~\ref{lem_upward} (if $\refB^\alpha$ is upward with respect to $\alpha$) or \ref{lem_downward} (otherwise) ensure the $\refy_i$ are \cpoints for a region $\refB$. The reflection of $\refB$, which we denote $B$, is thus an element of $\R(\pos^{j-1}_1,\pos^{j-1}_2)$, and such that $B^\alpha$ is an image of $B$ (Figure~\ref{fig_reflected_regions}(b)).

\begin{figure}[htb]
\centering
\includegraphics[scale=1.5]{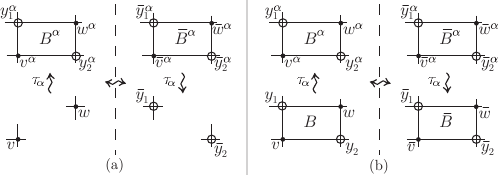}
\caption[]{}
\label{fig_reflected_regions}
\end{figure}

By Lemmas~\ref{lem_upward} and \ref{lem_downward}, any non-collapsed region with the opposite orientation of $B$, and \cpoints in $[y_i]_\alpha$, $i=1,2$, will have an image which completes $B^\alpha$. Consider then the set $S$ of regions of $\R(\pos^{j-1}_1,\pos^{j-1}_2)$ with \cpoints in $[y_i]_\alpha$, $i=1,2$, and the opposite orientation of $B$. We will show that $S$ contains a non-collapsed element. Note firstly that, by consistency, $B$ itself has a completion $B'$ which lies in $S$, so $S$ is non-empty.

Now, we may order $S$ by considering \cpoints along $\gamma_1$ in accordance with its standard orientation (i.e. pointing from $y_1$ to $v$); let $B'_m$ denote a minimal element. Suppose then that $B'_m$ is collapsed. Then by Lemma~\ref{lem_collapsed} there is a region $B_{m+1}$  of opposite orientation, and the same \dpoint set, as $B'_m$, and whose \cpoints $y_i^{m+1}$ lie in $[y_i]_\alpha$ (Figure~\ref{fig_maximal_region}(b)). In particular, $B_{m+1}$ lies in $\R(\pos^{j-1}_1,\pos^{j-1}_2)$, and so is completed by some $B'_{m+1}$ which lies in $S$, but is such that its \cpoint along $\gamma_1$ is further along $\gamma_1$ than that of $B'_m$, contradicting minimality. Thus $B'_m$ is non-collapsed, and so has an image which completes $B^\alpha$.

\begin{figure}[htb]
\centering
\includegraphics[scale=1.5]{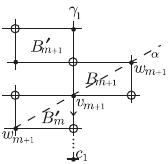}
\caption[]{}
\label{fig_maximal_region}
\end{figure}

\end{proof}

\section{Restrictions on $p.e.(\mon)$}\label{sec_d}

In this section, we switch focus back to the entirety of a pair of
distinct properly embedded arcs $\gamma_1, \gamma_2$ in a surface $\pg$, and the images of these arcs
under a right-veering diffeomorphism $\mon$. The motivating
observation here is that, as the endpoints of each arc are by definition included in any right position, the property of the initial horizontal segments of the images determining an initially parallel region is independent of right position, and so is a property of the pair $\mon(\gamma_1),\mon(\gamma_2)$. In particular, for \emph{positive} $\mon$, if $\mon(\gamma_1)$ and $\mon(\gamma_2)$ are initially parallel, they must admit consistent right positions $\pos_i(\mon,\gamma_i)$ in which the initially parallel region is completed (see Example \ref{ex_rp}). We are interested in
understanding what necessary conditions on $\alpha \in p.e.(\mon)$ (Definition~\ref{def_pe})
can be derived from the information that
$\mon(\gamma_1)$ and $\mon(\gamma_2)$ are initially parallel.

\subsection{Restrictions from initial horizontal segments}

Throughout the section, we will be considering rectangular
regions $R$ in $\pg$, and classifying various curves and
arcs by their intersection with such regions. We need:

\begin{defin}\label{def_arcs}
Let $R$ be an embedded rectangular region with distinguished oriented edge
$e_1$, which we call the \emph{base} of $R$. We label the remaining
edges $e_2,e_3,e_4$ in order, using the orientation on $e_1$ (Figure
\ref{fig_arcs}). We classify properly embedded (unoriented) arcs on
$R$ by the indices of the edges corresponding to their boundary
points, so that an arc with endpoints on $e_i$ and $e_j$ is of \emph{type}
$[i,j]$ (on $R$). We are only concerned with arcs whose endpoints are not on
a single edge. An arc on $R$ is then

\begin{itemize}

\item \emph{horizontal} if of type $[2,4]$

\item \emph{vertical} if of type $[1,3]$

\item \emph{upward} if of type $[1,2]$ or $[3,4]$

\item \emph{downward} if of type $[1,4]$ or $[2,3]$

\item \emph{non-diagonal} if horizontal or vertical

\end{itemize}

\end{defin}

\begin{figure}[h!]
\centering \scalebox{.6}{\includegraphics{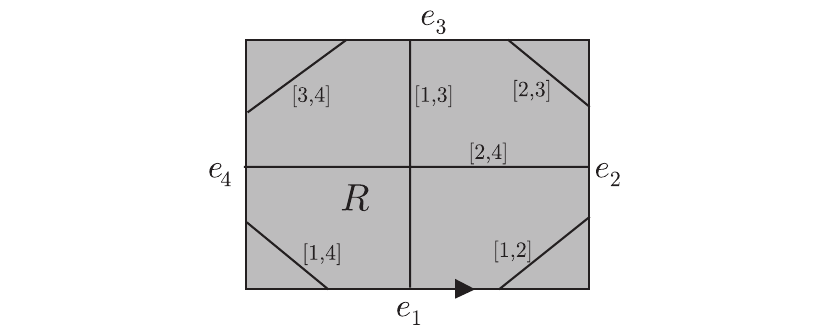}}
\caption[Arc types]{Representatives of each possible type of arc on
$R$} \label{fig_arcs}
\end{figure}

\begin{defin}\label{def_type_1}
Let $R$ in $\pg$ be an rectangular region with
distinguished base as in Definition~\ref{def_arcs}. We say $\alpha
\in SCC(\pg)$ is \emph{not downward on R} if each arc $\alpha \cap R$ is
not downward on $R$.
\end{defin}

\begin{remark}\label{D}
Following Definition~\ref{def_arcs}, we use a given pair of properly
embedded arcs to define a rectangular region $D$ of $\pg$
as follows: Suppose  $\gamma_1$ and $\gamma_2$ are disjoint properly embedded arcs,
where $\partial{\gamma_i} = \{c_i,c_i'\}$, and $\bar{\gamma_1}$ is a
given properly embedded arc with endpoints $c_1'$ and $c_2$. Let
$\bar{\gamma_2}$ be a parallel copy of $\bar{\gamma_1}$ with
endpoints isotoped along $\gamma_1$ and $\gamma_2$ to $c_1$ and $c'_2$. We then define
$D$ to be the rectangular region bounded by
$\gamma_1,\gamma_2,\bar{\gamma_1}, \bar{\gamma_2}$ (the endpoints
are labeled so that $c_1$ and $c_2$ are diagonally opposite), and
base $\bar{\gamma_2}$, oriented away from $c_1$ (see Figure
\ref{fig_disc}). For the remainder of this section, $D$ will always refer to this construction. Of course, for a given pair of arcs, $D$ is unique only up to the choice of $\bar{\gamma_1}$.

\end{remark}
\begin{figure}[h]
\centering \scalebox{.6}{\includegraphics{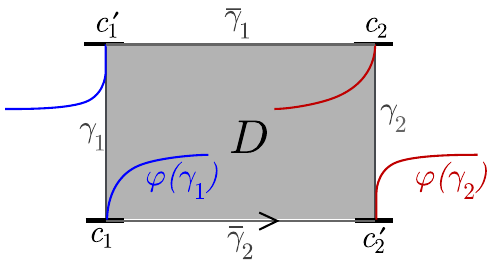}} \caption[Disc construction]{disc
construction} \label{fig_disc}
\end{figure}

\begin{defin}\label{def_flat}
We say the pair $\mon(\gamma_1), \mon(\gamma_2)$ is \emph{initially parallel (on
$D$)} if there exists $D$ such that $\mon(\gamma_1),\mon(\gamma_2), \gamma_1,\gamma_2$ bound a rectangular region $B \hookrightarrow D$ on which $c_1$ and $c_2$ are vertices. An initially parallel (on $D$) pair of images is \emph{flat} (on $D$) if $\mon(\gamma_i) \cap \bar{\gamma_j}
= \emptyset$ for $ i,j\in\{1,2\}$.
\end{defin}

As expected, then, the pair of arcs $\mon(\gamma_1), \mon(\gamma_2)$ is initially parallel exactly when the horizontal segments $h_{c_1}$ and $h_{c_2}$ originating from the boundary of each pair of right positions determine an initially parallel region.
Using this, we immediately obtain:

\begin{lem}\label{lem_not_downward}

Let $\mon$ be positive, $D$ as in Remark \ref{D} and the pair $\mon(\gamma_1), \mon(\gamma_2)$ initially parallel on $D$. Then $\alpha \in p.e.(\mon)$ only if
$\alpha$ is not downward on $D$.

\end{lem}

\begin{proof}
Suppose otherwise - then $\alpha \cap D$ has an arc of type $[1,4]$
or $[2,3]$ on $D$. Now, if $\alpha \in p.e.(\mon)$, there is some
positive factorization of $\mon$ in which $\tau_\alpha$ is the initial Dehn
twist. However, if $\alpha \cap D$ has an arc of type $[1,4]$ ($[2,3]$), then the initial segment of $\tau_\alpha(\gamma_1)$ (resp. $\tau_\alpha(\gamma_2)$) is strictly to the right of $\mon(\gamma_1)$ (resp. $\mon(\gamma_2)$), a contradiction.
\end{proof}

\subsubsection{Motivating examples}
We begin with a pair of examples, meant to motivate the various definitions to come.
\begin{example}\label{example_positions}

Consider a positive $\mon \in \mcg$, and a pair of properly embedded arcs $\gamma_1$ and $\gamma_2$ such that the images $\mon(\gamma_1)$ and $\mon(\gamma_2)$ are flat in some disc $D$ (Figure \ref{fig_minimal_position_pl}(a)). The segments $h_{c_1}$ and $h_{c_2}$ are of course initially parallel (along some region $B$), so completed by some $v\in \pos_1$ and $w \in \pos_2$ (it is easiest to keep in mind the simplest case in which $v$ and $w$ are the opposite endpoints of $\gamma_1$ and $\gamma_2$, as in the figure). Let $\alpha$ be a simple closed curve such that  $\alpha \cap D$ has exactly two diagonal arcs, each upward, and connected such that an orientation of $\alpha$ gives each of these arcs the same orientation. The arcs $\tau_\alpha^{-1}(\mon(\gamma_i))$ are thus again initially parallel, with \cpoints which map to those of $B$ under $i_\alpha$. One may then verify that there is a unique completing region, which is collapsed by $\alpha$. Lemma~\ref{lem_collapsed} then guarantees that, if $v_2$ and $w_2$ are the \dpoints of this completing region, these points define initially parallel segments $h_{v_2}$ and $h_{w_2}$. This new initially parallel region is then completed by (an image under $i_\alpha^{-1}$ of) the original completing region for $B$. Furthermore, the \cpoints of the regions are each fixable. The regions and points involved are illustrated in Figure \ref{fig_minimal_position_pl}(b).

Consider then a slightly more complicated configuration, in which $\alpha$ has 4 diagonal arcs, arranged as in Figure \ref{fig_minimal_position_pl}(c). Again, one may verify that any consistent right positions for $((\tau_\alpha^{-1} \circ \mon),\gamma_i)$ (Figure \ref{fig_minimal_position_pl}(d)) must contain one of the two positions indicated in Figure \ref{fig_minimal_position_pl}(e) and (f). Again, each of these determines a sequence of (collapsed) regions and points which ends with the original completions. The goal of the remainder of this section is to show that for \emph{any} simple closed curve $\alpha$, any pair of consistent right positions for $((\tau_\alpha^{-1} \circ \mon),\gamma_i)$ must contain a similar sequence, which then determines necessary conditions on $\alpha \in p.e. (\mon)$.
\end{example}

\begin{figure}[htb]
\centering
\includegraphics[scale=.7]{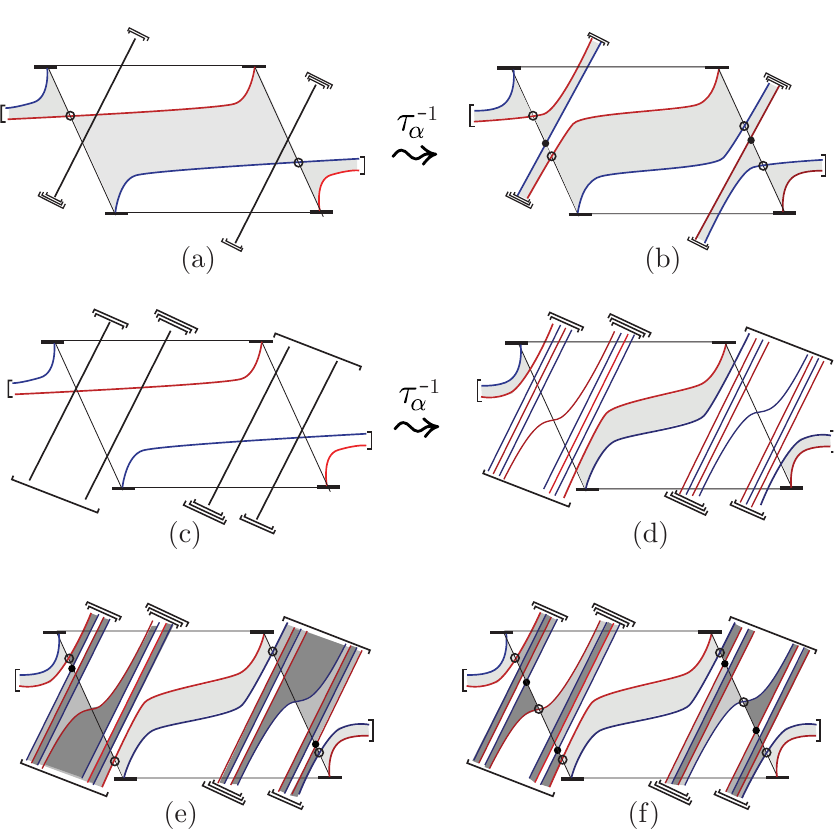}
\caption{  }
\label{fig_minimal_position_pl}
\end{figure}

\subsubsection{Notation and conventions}
Given a pair of arc images $\mon(\gamma_1)$ and $\mon(\gamma_2)$ flat on some $D$, and a non-downward curve $\alpha$, let $\preB \hookrightarrow D$ denote the initially parallel region with \dpoints $c_1$ and $c_2$, and $\prey_i \in \gamma_i$, $i=1,2$, the \cpoints. Consider the set of upward triangles of $(\alpha,\gamma_1,\mon(\gamma_2))$ with vertex $\prey_1$ (Figure \ref{fig_alpha_cap_D} (a)); each of course has a distinct vertex in $\gamma_1 \cap \alpha$, which we label $x_1,x_3,\ldots,x_{m_1}$ such that indices decrease along $\gamma_i$ in the direction of the orientation of $\gamma_1$. Similarly we have upward triangles with vertex $\prey_2$, and vertices $x_2,x_4,\ldots,x_{m_2}$ along $\gamma_2$ (Figure~\ref{fig_alpha_cap_D} (b)). Let $a_1, a_2 ,\dots a_m$, where $a_1=1$, be the indices of the $x_i$ with order given by the order in which they are encountered travelling along $\alpha$ to the right from $x_1$. We associate to $\alpha$ the list $\pi_{\alpha} = (a_1, a_2 ,\dots a_m)$, which for reasons which will become apparent in the following subsection we define only up to cyclic permutation. We decorate an entry with a bar, $\overline{a_j}$, if, for odd (even) $j$, the orientation of the intersection point $x_j$ agrees (disagrees) with that of $x_1$.

\begin{figure}[h!]
\centering \scalebox{.8}{\includegraphics{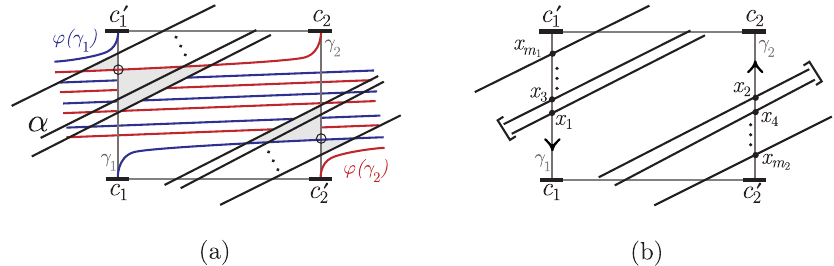}}
\caption[Arcs of $\alpha \cap D$]{(a) Triangular regions on the \cpoints of $B^\alpha$. (b) Labeling of arcs and intersections. The paths $\eta_{2,3}$ and $\eta_{4,1}$ are components of a symmetric multipath.}
\label{fig_alpha_cap_D}
\end{figure}

Given a pair $x_i \in \alpha \cap \gamma_2$ and $x_j \in \alpha \cap \gamma_1$ (so $i$ is even, $j$ odd), the pair splits $\alpha$ into two disjoint arcs. If the orientations of $x_i$ and $x_j$ disagree, we label these arcs $\eta_{i,j}$ and $\eta_{j,i}$ (where for even $i$, $\eta_{i,j}$ refers to the arc connecting $x_i$ and $x_j$ and initially \emph{exterior} to $D$ at each endpoint, so for odd $j$, a neighborhood of each endpoint of $\eta_{j,i}$ is contained in $D$). We refer to each such arc as a \emph{path} of the pair $(\alpha,D)$. We say paths $\eta$ and $\eta'$ are \emph{parallel} if, after sliding endpoints along the $\gamma_i$ so as to coincide, they are isotopic as arcs (relative to their endpoints). Then a \emph{multipath} of $(\alpha,D)$ is a collection of more than one pairwise parallel path.

Of particular interest will be the following special case:

\begin{defin}\label{def_multipath}
A multipath as described in the previous paragraph is \emph{symmetric} if its components are $\eta_{a_1,b_1},\eta_{a_2,b_2},\ldots , \eta_{a_r,b_r}$, where each pair $\{a_i,b_{r+1-i}\}$ is $\{j,j-1\}$ for some even $j$. So, for example, any symmetric \emph{path} is $\eta_{j,j-1}$ or $\eta_{j-1,j}$ for some even $j$.  Figure~\ref{fig_alpha_cap_D} (b) contains an example of a symmetric multipath with components $\eta_{2,3}$ and $\eta_{4,1}$.
\end{defin}

\subsection{Necessary conditions for $\alpha \in p.e.(\mon)$}

This subsection is devoted to introducing the terminology of, and proving, Theorem~\ref{thm_nested}.

\begin{defin}\label{def_nested}

Let $\alpha \in SCC(\pg)$ be not downward on a region $D$. We call $\alpha$ \emph{balanced} (with respect to $D$) if each $x_i$ is an endpoint in either a symmetric path or a component of a symmetric multipath. We say balanced $\alpha$ is \emph{nested} with respect to $D$ if its associated $\pi_{\alpha}$ can be reduced to the empty word by successive removals of pairs $(i,j)$ of consecutive entries, where $\eta_{i,j}$ is a symmetric path or component of a symmetric multipath. 
\end{defin}

Note that, in the absence of symmetric multipaths of more than one component, these properties are encoded in $\pi_\alpha$. For example, $(12\overline{3}4)$ is not balanced, $(135462)$ is balanced but not nested, and $(1\overline{4}56\overline{3}2)$ is nested. Also note that each condition requires the number of intersections $|\alpha \cap \gamma_1|$ to equal $|\alpha \cap \gamma_2|$; i.e. $m_1 + 1 = m_2$.

We want to characterize the nested condition in terms of paths of $\alpha$. We need:

\begin{defin}
Let $\eta_{a,b},\eta_{c,d} \subset \alpha$ be paths, with $a,b,c$ and $d$ distinct. We refer to the pair as \emph{nested} if either one contains the other, or they are disjoint. Otherwise, the pair is \emph{non-nested}.
\end{defin}

Observe then that, if for some balanced $\alpha$, each pair of components of symmetric multipaths is nested, then there is some `innermost' multipath whose paths contain no other. In particular the indices of the endpoints of these paths are consecutive pairs in $\pi_\alpha$. We have:

\begin{lem}
Let balanced $\alpha$ be such that each pair of components of symmetric multipaths is nested. Then $\alpha$ is nested.
\end{lem} 

\begin{flushright}
$\square$ 
\end{flushright}

To keep track of intersection points $\gamma_i \cap \tau_\alpha^{-1}(\mon(\gamma_j))$, $i,j \in \{1,2\}$, consider the restriction of $\tau_\alpha^{-1}$ to a neighborhood $U(\alpha)$ of $\alpha$ (Figure~\ref{fig_alpha_inverse_psi}). Observe that each \cpoint of the initially parallel region with \dpoints $c_1$ and $c_2$ is fixable under $\tau^{-1}_\alpha$. As in the previous subsection we label these $\prey_1 \in \gamma_1 \cap \mon(\gamma_2)$ and $\prey_2  \in \gamma_2 \cap \mon(\gamma_1)$. There are then exactly $(m_1+3)/2$ points to which $\prey_1$ may be fixed by $i_\alpha^{-1}$; we label these $y_1,y_3,\ldots,y_{m_1+1}$ such that the indices increase along $\gamma_1$ from $c_1$. Similarly, there are $(m_2+2)/2$ possible fixed preimages of $y_2$, which we label $y_2,y_4,\ldots,y_{m_2+2}$ along $\gamma_2$ from $c_2$.

\begin{figure}[h!]
\centering \scalebox{.9}{\includegraphics{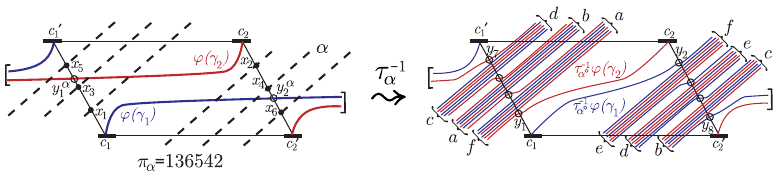}}
\caption[Example of inverse image of $D$]{Above left, the various points of $\alpha \cap \gamma_i$ and $\alpha \cap \mon(\gamma_i)$. Above right, the inverse image $\tau_\alpha^{-1} (\mon (\gamma_i))$, for $\alpha$ with
$\pi_\alpha = 136542$ (strands terminating in brackets with like letters are meant to be identified).} \label{fig_alpha_inverse_psi}
\end{figure}

\subsubsection{The right positions $\pos^*_1$ and $\pos^*_2$}

Our immediate goal is to show that, if $\tau_{\alpha}^{-1}(\mon(\gamma_i)), i=1,2$ admit consistent right positions, then any such pair of right positions has a particular  pair of sub-positions whose regions extend non-trivially along the entire length of each arc, as described in Example~\ref{example_positions}. Theorem~\ref{thm_nested} will then follow from various properties of these right positions.

\begin{lem}\label{lem_index_chain}
Let $\mon(\gamma_1)$ and $\mon(\gamma_2)$ be flat with respect to some $D$, $\alpha \in p.e.(\mon)$, and  $\pos_i, i=1,2$ consistent right positions of $(\tau_{\alpha}^{-1} \circ \mon, \gamma_i)$. Then there are $\pos^*_1 = \{v_j\}_{j=1}^n \subset \pos_1$, $\pos_2^* = \{w_j\}_{j=1}^n \subset \pos_2$, and $\{e_{k}\}_{k=1}^{2n} \subset \mathbb{N}$ such that

\begin{enumerate}

\item   Each of the sets $\{v_j\}$ and $\{w_j\}$ is a right position (i.e. $v_1 = c_1$ and $v_n = c'_1$, and $w_1=c_2$ and $w_n=c'_2$), and the indices $1,\cdots,n$ increase along each of $\gamma_i$ and $\tau^{-1}_\alpha(\mon(\gamma_i))$ from the endpoint $c_i$.

\item For each $j < n$, segments $h_{v_j,v_{j+1}}$ and $h_{w_j,w_{j+1}}$ are initially parallel along a region $B_j$, and completed along a region $B'_j$ with \dpoints $v_{j+1}$ and $w_{j+1}$. Furthermore, the set of \cpoints for these regions are fixable, and the \cpoints of the completing pair $B_j, B'_j$ are $y_{e_{2j-1}}$ and $y_{e_{2j}}$ (in the set $\{y_k\}$ of fixable points described above). 

\end{enumerate}
See Figure~\ref{fig_minimal_regions} for a schematic illustration.
\end{lem}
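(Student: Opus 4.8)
# Proof Proposal for Lemma \ref{lem_index_chain}

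The plan is to build the sequences $\{v_j\}$, $\{w_j\}$, and $\{e_k\}$ inductively, starting from the endpoints $v_1 = c_1$, $w_1 = c_2$ and tracking the chain of completions forced by consistency. First I would observe that, since $(D,\mon)$ is flat and parallel and $\alpha \in p.e.(\mon)$, the pair $\tau_\alpha^{-1}(\mon(\gamma_1)), \tau_\alpha^{-1}(\mon(\gamma_2))$ is initially parallel by Lemma \ref{lem_type_1}, so any consistent pair $\pos_1, \pos_2$ contains a completing pair $v_2 \in \pos_1$, $w_2 \in \pos_2$ for the initial segments $h_{c_1}, h_{c_2}$. This is the base of the induction. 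The key structural input is that $\tau_\alpha^{-1}$ is supported in a neighborhood $U(\alpha)$, so the images $\tau_\alpha^{-1}(\mon(\gamma_i))$ differ from the (parallel) arcs $\mon(\gamma_i)$ only near $\alpha$, and every horizontal segment, every initially parallel region $B_j$, and every completing region $B'_j$ can be described in the local coordinates $s_k^l$ on $U(\alpha)$ set up before the lemma. Using the combinatorics of $\pi_\alpha$ and the basic regions $B_{a,b}$ of Definition \ref{def_corresponding_discs}, one reads off that the midpoints of a completing pair $B_j, B'_j$ must be a pair of the fixable points $y_k$ — this is where the indices $e_{2j-1}, e_{2j}$ come from.

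The inductive step is the heart of the argument: assuming $v_j, w_j$ have been produced with $h_{v_j}, h_{w_j}$ initially parallel, I would show that (i) these segments are in fact \emph{completed} — which is exactly consistency of $\pos_1, \pos_2$ (Definition \ref{def_consistent}), giving endpoints $v_{j+1} \in \pos_1$, $w_{j+1} \in \pos_2$ and a completing region $B'_j$; (ii) the midpoints of $B_j$ and $B'_j$ are fixable — here I would invoke the analysis of fixable points and the fact (Lemma \ref{lem_standard_form} and the discussion of simultaneous images) that the regions produced by the algorithm have fixable corners, combined with the observation that the only candidate midpoints are the $y_k$ of Figure \ref{fig_alpha_inverse_psi}, which are fixable by construction of $U(\alpha)$; and (iii) the \emph{new} pair $h_{v_{j+1}}, h_{w_{j+1}}$ is again initially parallel, so the induction continues. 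Point (iii) uses the "turning the picture upside-down" remark of Definition \ref{def_locally_compatible}: having $h_{v_j,v_{j+1}}$ and $h_{w_j,w_{j+1}}$ complete one another along $B'_j$ means that, reorienting, $h_{v_{j+1}}$ and $h_{w_{j+1}}$ are initially parallel along $B'_j$. That the indices $1, \dots, n$ increase monotonically along both $\gamma_i$ and $\tau_\alpha^{-1}(\mon(\gamma_i))$ follows because each completing region $B'_j$ is "above" $B_j$ in the ordering induced along the arcs, so each successive $v_{j+1}$ lies past $v_j$; and the process terminates precisely when it reaches the opposite endpoints $c_1', c_2'$, which occurs after finitely many steps since $\alpha \cap D$ is a finite arc system — this defines $n$.

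The main obstacle I anticipate is step (iii) together with the bookkeeping in the local coordinates: one must verify that after completing along $B'_j$ the segments emanating from $v_{j+1}, w_{j+1}$ are genuinely initially parallel \emph{as segments of the right positions $\pos_i$} (not merely that some rectangular region exists), and that no "extra" intersections introduced by $\tau_\alpha^{-1}$ in $U(\alpha)$ obstruct this — this is where the strand coordinates $s_k^l$ and the nested/non-nested dichotomy for basic regions (Definition \ref{def_corresponding_discs}) do the real work, and it is the part that will require careful case analysis rather than a one-line appeal. A secondary subtlety is ensuring the midpoints $y_{e_{2j-1}}, y_{e_{2j}}$ are well-defined — i.e. that the completing pair $(B_j, B'_j)$ determines its midpoints uniquely given the fixability constraint — which again reduces to the local picture near $\alpha$ being rigid once the endpoints $v_j, v_{j+1}, w_j, w_{j+1}$ are pinned down. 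Everything else (monotonicity, termination, that $\{v_j\}$ and $\{w_j\}$ are right positions) is then essentially formal.
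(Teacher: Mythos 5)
Your overall skeleton (induct from $c_1,c_2$, let consistency force a chain of completions, work in the strand coordinates $s^l_k$ on $U(\alpha)$) is the same as the paper's, but the justification you give for the crucial step (iii) is wrong, and the hedged caveat at the end does not repair it. The ``turning the picture upside-down'' remark of Definition \ref{def_locally_compatible} only says that the \emph{backward} segments $h_{v_{j+1},v_j}$ and $h_{w_{j+1},w_j}$ are initially parallel along the same region $B'_j$ — that is a tautological restatement of what it means for $B'_j$ to complete $B_j$. What the lemma actually needs is that the \emph{forward} segments $h_{v_{j+1},c'_1}$ and $h_{w_{j+1},c'_2}$ are initially parallel along a \emph{new} region $B_{j+1}$ whose midpoints again lie in the fixable set $\{y_k\}$; this does not follow from the symmetry of the completion. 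The paper's argument for this is: (a) the endpoint $v_{j+1}$ of $B'_j$ lies on some strand $s^l_r$, which forces $w_{j+1}$ onto $s^{l-1}_{r+1}$; (b) connectedness of $\alpha$ rules out having both $r>1$ and $l>2$ (otherwise two consecutive subarcs $[y_k,y_{k+2}]$ of $\tau_\alpha^{-1}(\mon(\gamma_1))$ would be parallel and $\alpha$ would be disconnected); (c) in each of the three surviving cases ($r=1,l=2$; $r>1$; $l>2$) the existence of the completing region forces a symmetric (multi)path of $(\alpha,D)$, and the complementary basic regions of that multipath assemble into exactly the required $B_{j+1}$ with midpoints $y_{e_{2j+1}}, y_{e_{2j+2}}$. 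None of this is derivable from the definition of completion alone, and your proposal never identifies the connectivity constraint or the row/column structure of basic regions that makes the induction close up.

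A second, smaller omission: termination. You assert the process ``terminates precisely when it reaches the opposite endpoints,'' but reaching $w_n=c'_2$ on the $\gamma_2$ side does not automatically put the corresponding $v_n$ at $c'_1$; the paper has to argue separately (by rerunning the argument of Lemma \ref{lem_intersections} on the terminal segments $h_{v_{n-1},c'_1}$ and $h_{w_{n-1},c'_2}$) that the last completion is by the pair of endpoints, which in particular forces $|\alpha\cap\gamma_1| = |\alpha\cap\gamma_2|$. Without that step, condition (1) of the lemma is not established.
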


\begin{figure}[htb]
\centering
\includegraphics[scale=.8]{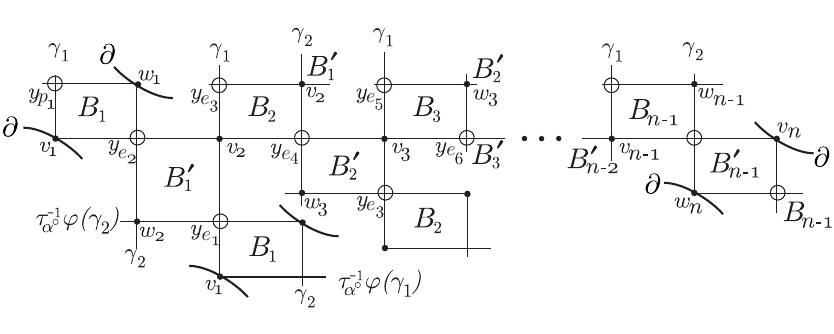}
\caption{A schematic overview of the positions $\pos_1^*$ and $\pos_2^*$ constructed in Lemma~\ref{lem_index_chain}. The picture is of the universal cover of $\pg$, along a lift of $\tau^{-1}_{\alpha}(\mon(\gamma_1))$. As usual, all lifts of an object are given the same label as the original object.}
\label{fig_minimal_regions}
\end{figure}

\begin{proof}
 By assumption, $h_{c_1}$ and $h_{c_2}$ are initially parallel along a region $B_1$, with fixable \cpoints $y_1$ and $y_2$, which for consistency must be completed by some region $B'_1$ with \dpoints which we label $v_2$ and $w_2$.  We need then to show that the horizontal segments $h_{v_2,c'_1}$ and $h_{w_2,c'_2}$ are themselves initially parallel along a region with \cpoints again in the fixable point set $\{y_k\}$. The result will then follow, as we may repeat the construction until we hit one of the opposite endpoints.

Now, each of the sets $\{y_1,y_3, \ldots ,y_{m_1+2}\}$ and $\{y_2,y_4,\ldots,y_{m_2+2}\}$ is of course just the $\alpha-$equivalence class (Definition~\ref{def_alpha_equiv}) of each of its elements. In particular, each edge of $B'_1$ is contained in the span of the $\alpha$-equivalence class of its \cpoint; as $\alpha$ may be assumed to intersect $B'_1$ non-trivially, $B_1'$ is collapsed by $\alpha$, and so by Lemma~\ref{lem_collapsed}, there is a complementary $(B_1')^c$, with \cpoints in $[y_i]_{\alpha}$, as desired.

Continuing in this fashion, so for each $j$, $B_j = (B'_{j-1})^c$, we obtain a sequence of points $\{v_j\}$ and $\{w_j\}$ satisfying condition (2), which continues until we hit an endpoint; i.e. (assuming $m_1 \geq m_2-1$), $w_n = c'_2$. Condition (1) will then be satisfied if $m_1 = m_2-1$, so that the corresponding $v_n$ is the other endpoint $c'_1$.

 Suppose otherwise; we will show that $B_{n-1}$ has no completion, a contradiction. As elsewhere in the paper, we will use the notation $[a,b]_\sigma$ to denote the segment of an arc $\sigma$ between points $a$ and $b$. Let ${\preB}'$ denote the completion of $\preB$, and consider firstly the case that $\alpha \cap {\preB}'$ has no vertical arcs (i.e. arcs connecting $\mon(\gamma_1)$ and $\mon(\gamma_2)$ - Figure~\ref{fig_no_path_general} (a)). Then, by construction, $y_{2n-2}$ is such that $e := [y_{2n-2},c'_1]_{\tau_\alpha^{-1}(\mon(\gamma_1))}$ has no interior intersection with $[y_1,c'_1]_{\gamma_1}$ (Figure~\ref{fig_no_path_general} (b)). As such, any completion of $B_{n-1}$ has $e$ as an edge, and $c'_1$ as a \dpoint. Thus $[y_{2n-3},c'_1]_{\gamma_1}$ is another edge. However, it is clear that e.g. the intersection number of this edge with $\tau_\alpha^{-1}(\mon(\gamma_1))$ is greater than that of any candidate edge along $\gamma_2$, so we have no possible region. The remaining case, that $\alpha \cap {\preB}'$ has vertical arcs, is almost identical: we now take $e$ to be $[y_{2n-2},c'_2]_{\gamma_2}$, which now has no interior intersection with either $\tau_\alpha^{-1}(\mon(\gamma_i))$, so is an edge of any completion. Thus $[y_{2n-3},c'_2]_{\tau_\alpha^{-1}(\mon(\gamma_2))}$ is another edge, and again considering intersections of $\tau_\alpha^{-1}(\mon(\gamma_1))$ with the edges along the $\gamma_i$ we conclude that no region exists. 
 
\end{proof}

\begin{figure}[h!]
\centering \scalebox{.6}{\includegraphics{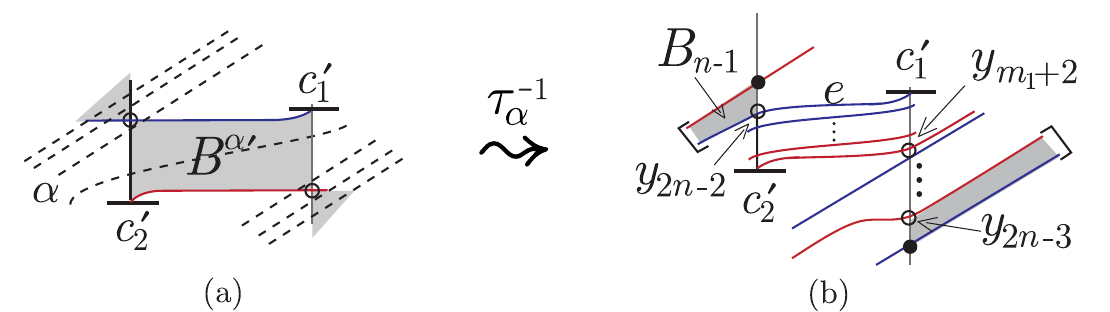}}
\caption[Discs]{}
\label{fig_no_path_general}
\end{figure}

We have the following immediate corollary, giving us `half' of Theorem~\ref{thm_nested}:

\begin{cor}\label{lem_balanced}

Let $\mon(\gamma_1)$ and $\mon(\gamma_2)$ be flat with respect to some $D$. Then $\alpha \in p.e.(\mon)$ only if $\alpha$ is balanced with respect to $D$.

\end{cor}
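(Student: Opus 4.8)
The plan is to read the conclusion off directly from Lemma~\ref{lem_index_chain}. Suppose $\alpha \in p.e.(\mon)$. Choosing a positive factorization of $\mon$ in which $\tau_\alpha$ occurs and, via the braid relation, sliding that twist to the front, we see that $\tau_\alpha^{-1}\circ\mon$ is itself positive; by Theorem~\ref{thm_rp} the pairs $(\tau_\alpha^{-1}\circ\mon,\gamma_i)$, $i=1,2$, admit consistent right positions $\pos_1,\pos_2$. Apply Lemma~\ref{lem_index_chain} to these to obtain the sequences $\{v_j\}_{j=1}^n\subset\pos_1$, $\{w_j\}_{j=1}^n\subset\pos_2$, $\{e_k\}_{k=1}^{2n}$, together with the initially parallel regions $B_j$ and completing regions $B'_j$ of that lemma.

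Next I would unwind what each $B'_j$ (equivalently $B_{j+1}$) looks like, using precisely the three-case analysis carried out inside the proof of Lemma~\ref{lem_index_chain}. In every case the region $B'_j$ is a union of basic regions $B_{a_1,b_1},\dots,B_{a_r,b_r}$ whose associated paths $\eta_{a_1,b_1},\dots,\eta_{a_r,b_r}$ constitute a symmetric multipath of $(\alpha,D)$ (a single symmetric path when $r=1$, as in Figure~\ref{fig_minimal_position}), while $B_{j+1}$ is the union of the complementary basic regions $B_{b_i,a_i}$. By Lemma~\ref{lem_index_chain}(1) the chains $\{v_j\}$, $\{w_j\}$ run the full length of $\gamma_1$ and $\gamma_2$ and terminate at the opposite endpoints $c'_1,c'_2$; consequently the basic regions occurring across the steps $j=1,\dots,n-1$ account for every strand $s^l_k$ in the coordinates on $U(\alpha)$, hence for every intersection point $x_i\in\alpha\cap(\gamma_1\cup\gamma_2)$. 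Thus each $x_i$ is an endpoint of one of the symmetric paths $\eta_{a,b}$ that appear, i.e. an endpoint of a symmetric path or of a component of a symmetric multipath. By Definition~\ref{def_nested} this says exactly that $\alpha$ is balanced with respect to $D$, as claimed. (Note that this also recovers $m_1+1=m_2$, which the terminal step of Lemma~\ref{lem_index_chain}, via the argument of Lemma~\ref{lem_intersections}, forces in order that $v_n=c'_1$.)

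All the real work is done by Lemma~\ref{lem_index_chain}; the only thing demanding care is the bookkeeping claim that the basic regions arising across all steps \emph{partition} the set of strands $\{s^l_k\}$. This is where I expect the (minor) obstacle to lie: one must check that consecutive pairs $B'_j,B_{j+1}$ overlap only in the boundary corners $v_{j+1},w_{j+1}$, and that together with the terminal step no strand is left uncovered and none is double-counted — so that the symmetric (multi)paths produced genuinely have \emph{all} of the $x_i$ among their endpoints, rather than merely some of them. Once that is in place the corollary is immediate.
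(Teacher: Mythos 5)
Your proposal is correct and follows the paper's route exactly: the paper states this corollary with no written argument (just a \qed) precisely because it regards it as an immediate unwinding of Lemma~\ref{lem_index_chain}, and your write-up supplies that unwinding — each completing region $B'_j$ certifies a symmetric path or multipath whose components' endpoints exhaust a consecutive batch of the $x_i$, and condition (1) of the lemma forces the chain to terminate at $c'_1,c'_2$ so that every $x_i$ is accounted for. The "partition of strands" worry you flag is not actually needed for balancedness (one only needs every $x_i$ to occur as an endpoint of some symmetric (multi)path component, not a disjoint covering of the $s^l_k$), so there is no gap.
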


\begin{flushright}
\qed
\end{flushright}

\subsubsection{Nestedness}

The proof of the remainder of Theorem~\ref{thm_nested}, that \emph{nestedness} is a further necessary condition for consistency, in fact also follows easily from the previous lemma, but keeping track of everything involved requires quite a bit of notation. We begin by using Lemma~\ref{lem_index_chain} to characterize nestedness in terms of regions and \dpoints.

As in the previous section, we denote by  $\mathcal{R}(\pos_1,\pos_2)$ the set of initially parallel / completed regions in a pair of right positions $\pos_1$ and $\pos_2$. Recall (Lemma~\ref{lem_collapsed}), that a collapsed region of length $l>1$ and width $w=1$ (resp. length $l=1$ and width $w>1$) contains $l-1$ (resp. $w-1$) collapsed subregions each of $l=w=1$.

\begin{defin}\label{def_corresponding_discs}
Given a pair of complementary paths $\eta_{a,b}$ and $\eta_{b,a}$, where $a$ is \emph{even}, let $B_{a,b}$ and $B_{b,a}$ be the collapsed pair of rectangular regions bounded by $[y_a,y_{a+2}]_{\gamma_2}$, $[y_a,y_{a+2}]_{\tau_\alpha^{-1}(\mon(\gamma_2))}$, $[y_b,y_{b+2}]_{\gamma_1}$, and $[y_b,y_{b+2}]_{\tau_\alpha^{-1}(\mon(\gamma_1))}$, where  $B_{a,b}$ has vertices $y_a$ and $y_b$, and $B_{b,a}$ has vertices $y_{a+2}$ and $y_{b+2}$. (Figure~\ref{fig_corresponding_regions} (a)). Thus $B_{a,b}$ and $B_{b,a}$ are collapsed regions satisfying length=width=1. We refer to each such region as \emph{basic}. We refer to a pair of basic regions $B_{a,b}$ and $B_{c,d}$ as \emph{nested} or \emph{non-nested} if the associated paths $\eta_{a,b}$ and $\eta_{c,d}$ are nested or non-nested.  From the definitions, we see that, if $B_{a,b}$ and $B_{c,d}$ are nested, they are either disjoint, or intersect such that no vertex is in the intersection; i.e. in `strips' as in Figure~\ref{fig_corresponding_regions} (b). Conversely, if the regions are non-nested, they intersect in a `corner overlap' as in Figure~\ref{fig_corresponding_regions} (c). Finally, we call a region $B \in \mathcal{R}(\pos^*_1,\pos^*_2)$ \emph{nested} if each basic subregion of $B$ is nested with each basic subregion of all other regions of $\mathcal{R}(\pos^*_1,\pos^*_2)$.
\end{defin}

\begin{figure}[htb]
\centering
\includegraphics[scale=.8]{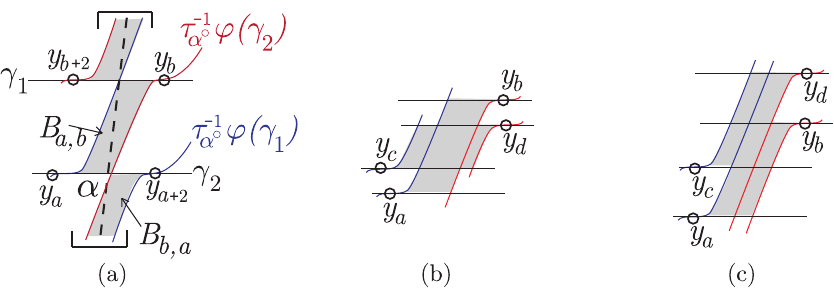}
\caption{(a) The region $B_{a,b}$ corresponding to the path $\eta_{a,b}$, and complementary $B_{b,a}$ corresponding to $\eta_{b,a}$. The curve $\alpha$ is the dashed line. (b) Nested, and (c) non-nested regions.}
\label{fig_corresponding_regions}
\end{figure}

We can thus characterize nestedness in terms of regions as follows:
\begin{obs}\label{nestedness_characterization}
A balanced curve $\alpha$ is nested if, for any two regions $A,B \in \mathcal{R}(\pos^*_1,\pos^*_2)$, any pair consisting of a basic region of $A$ and a basic region of $B$ is nested. Equivalently, if each $B \in \mathcal{R}(\pos^*_1,\pos^*_2)$ is nested.
\end{obs}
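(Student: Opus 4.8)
Looking at this, I need to prove Observation \ref{nestedness_characterization}, which characterizes when a balanced curve $\alpha$ is nested in terms of basic regions associated to the regions in $\mathcal{R}(\pos^*_1,\pos^*_2)$.

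The plan is to reduce Observation~\ref{nestedness_characterization} to the combinatorial equivalence, recorded just before the statement of Theorem~\ref{thm_nested}, that a balanced $\alpha$ is nested if and only if each pair of components of symmetric multipaths of $(\alpha,D)$ is nested. Concretely, I would translate the statement, which is phrased in terms of the basic regions attached to the regions of $\mathcal{R}(\pos^*_1,\pos^*_2)$, back into the language of symmetric multipaths, using two dictionary facts already in hand. First, by the remark following Definition~\ref{def_corresponding_discs}, a pair of basic regions $B_{a,b}$, $B_{c,d}$ is nested precisely when the corresponding pair of paths $\eta_{a,b}$, $\eta_{c,d}$ is nested. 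Second, by the construction in the proof of Lemma~\ref{lem_index_chain}, the regions of $\mathcal{R}(\pos^*_1,\pos^*_2)$ are exactly the regions $B_j$, $B'_j$ produced by the walk along the two arcs, and each carries an associated collection of basic regions: the one attached to $B'_j$ is $\{B_{p,q} : \eta_{p,q}\text{ a component of }M_j\}$ for a symmetric multipath $M_j$ (allowing the one-component case of a symmetric path), and the one attached to $B_{j+1}$ is the complementary collection $\{B_{q,p}\}$.

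Next I would invoke balancedness. Since $\alpha$ is balanced, the walk of Lemma~\ref{lem_index_chain} runs from one pair of endpoints of the $\gamma_i$ to the other, and so visits every intersection point $x_i$ exactly once; hence the collections of basic regions attached to the various $B_j$, $B'_j$ together realize every component of every symmetric multipath of $(\alpha,D)$, each exactly once. Combining this with the first dictionary fact, the condition in the Observation — that for all $A,B\in\mathcal{R}(\pos^*_1,\pos^*_2)$ every pair consisting of a basic region of $A$ and a basic region of $B$ is nested — becomes exactly the condition that every pair of components of symmetric multipaths of $(\alpha,D)$ is nested, the case $A=B$ accounting for two components of a single multipath and the case $A\neq B$ for components drawn from distinct multipaths. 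Invoking the equivalence recorded just before Theorem~\ref{thm_nested}, this last condition holds, for balanced $\alpha$, if and only if $\alpha$ is nested with respect to $D$, which is the claim.

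The step I expect to require the most care is the second dictionary fact: checking that the basic regions attached to the regions of $\mathcal{R}(\pos^*_1,\pos^*_2)$ are in an exact, bijective correspondence with the components of the symmetric multipaths of $(\alpha,D)$, with nothing omitted and nothing counted twice. This is a matter of reading off the three cases in the proof of Lemma~\ref{lem_index_chain} (the single-path case and the two complementary multipath cases, together with their row/column descriptions of $B'_j$ and $B_{j+1}$) and using the balancedness hypothesis, which is precisely what forces the walk to terminate only at the opposite endpoints and therefore to exhaust every intersection point. Everything else in the argument is a direct substitution of definitions.
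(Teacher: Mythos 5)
Your argument is correct and matches the paper's intended reading: the paper states this as an observation with no written proof, treating it as an immediate consequence of the dictionary of Definition~\ref{def_corresponding_discs} (basic regions are nested exactly when their paths are), the identification of the basic regions of each $B_j$, $B'_j$ with the components of the symmetric multipaths produced by the walk of Lemma~\ref{lem_index_chain}, and the equivalence recorded just before Theorem~\ref{thm_nested}. You also correctly isolate the one point that genuinely needs checking — that balancedness forces the walk to run from endpoint to endpoint and hence to realize every symmetric multipath component as a basic region of some region of $\mathcal{R}(\pos^*_1,\pos^*_2)$ — which is exactly what the case analysis of Lemma~\ref{lem_index_chain} supplies.
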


We need:

\begin{defin} Given right positions $\pos_1, \pos_2, \pos_3$ and $\pos_4$, we call a region $B \in \mathcal{R}(\pos_1,\pos_2)$ \emph{empty} (in $ (\pos_3,\pos_4)$) if neither the interior of $B$ nor the interior of the edges which form $\partial B$ contain any points of either $\pos_3$ or $\pos_4$. Conversely, any point $y$ which is a \cpoint of some region in $\mathcal{R}(\pos_1,\pos_2)$  is \emph{isolated} (in $\mathcal{R}(\pos_3,\pos_4)$) if, for each $B \in \mathcal{R}(\pos_3,\pos_4)$, $y$ is in neither the interior of $B$ nor the interior of the edges which form $\partial B$.
\end{defin}

Then:

\begin{lem}\label{lem_structure}
Let $\pos_1^*$ and $\pos_2^*$ be the right positions constructed in Lemma~\ref{lem_index_chain}. Then if $B \in \mathcal{R}(\pos^*_1,\pos^*_2)$ is empty (in $ (\pos^*_1,\pos^*_2)$) with isolated \cpoints, then $B$ is nested.

\end{lem}

\begin{proof}
Suppose otherwise, so $B$ is empty with isolated \cpoints, and there is $A \in \mathcal{R}(\pos^*_1,\pos^*_2)$ such that some basic region $B_{a,b}$ of $B$ and some basic region $B_{c,d}$ of $A$ are non-nested. Now, as in the proof of Lemma~\ref{lem_collapsed}, any collapsed region has either width or length equal to 1; the cases are essentially identical, so we assume $l(B) = 1$. Now, if $l(A) \neq 1$, note that $B_{a,b}$ and $B_{d,c} = (B_{c,d})^c$ are also non-nested (Figure~\ref{fig_structure}(a)), so we may replace $A$ with its complementary region $A^c$, which then satisfies $l(A^c)=1$. It is then trivial to verify that, if two collapsed regions each of length one (or each of width one) have non-nested basic regions, then each of the pair either contains a \dpoint of, or has a \cpoint contained in, the other (Figure~\ref{fig_structure}(b)). We therefore arrive at a contradiction.
\end{proof}

\begin{figure}[htb]
\centering
\includegraphics[scale=2.5]{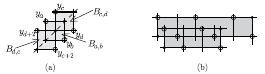}
\caption{(a) If a basic region is non-nested, so is its complementary region. (b) 2 collapsed, non-nested regions of length 1. Each is either non-empty, or has non-isolated \cpoints.}
\label{fig_structure}
\end{figure}

\begin{lem}\label{lem_structure_2}
Let $\pos_1^*$ and $\pos_2^*$ be as in Lemma~\ref{lem_index_chain}. Then if $B \in \mathcal{R}(\pos^*_1,\pos^*_2)$ is empty (in $ (\pos^*_1,\pos^*_2)$) with isolated \cpoints, then the same holds for $B^c$.
\end{lem}

\begin{proof}

Suppose that $B^c$ is not empty, so contains some \dpoint $v$ of some region $A \in \mathcal{R}(\pos^*_1,\pos^*_2)$. Now, the complement in $B$ of the basic regions of $B$ is the same as the complement in $B^c$ of the basic regions of $B^c$, so, as $B$ is empty, $v$ lies in a basic region of $B^c$ (Figure~\ref{fig_structure_2}(a)). But then the basic subregion of $A$ with vertex $v$ is non-nested with a basic subregion of $B$, contradicting the previous lemma. 

\begin{figure}[htb]
\centering
\includegraphics[scale=2]{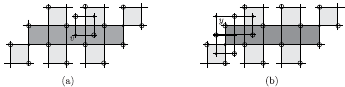}
\caption{Figures for Lemma~\ref{lem_structure_2}. The region $B^c$ is darkly shaded, the basic regions of $B$ lightly. }
\label{fig_structure_2}
\end{figure}

Similarly, if a \cpoint $y$ of $B^c$ is not isolated, it is contained in some $A$ (Figure~\ref{fig_structure_2}(b)). Then, as by construction no basic region contains the \cpoint of any other, $y$ must lie in the complement of the basic regions of $A$; i.e. in $A \cap A^c$. But then $A$ and $B$ again contain non-nested basic regions, a contradiction. 
\end{proof}

Now, to connect $\R(\pos_1^*,\pos_2^*)$ with arbitrary consistent $\R(\pos_1,\pos_2)$, we require:

\begin{lem}\label{lem_empty} Let $\gamma_i,\mon,\pos_i,\pos^*_i$, $i=1,2$, be as in Lemma~\ref{lem_index_chain}. Suppose region $B_j \in \R(\pos^*_1,\pos^*_2)$ is empty in $(\pos_1,\pos_2)$, and the \cpoints $y_{e_{2j-1}}$ and $y_{e_{2j}}$ isolated in $\R(\pos_1,\pos_2)$. Then the completing region $B'_j$ is also empty in $(\pos_1,\pos_2)$.
\end{lem}

\begin{proof} Suppose that $B_j$ is empty, while $B'_j$ is not, so there is some $v \in \pos_i$, $i \in \{1,2\}$, such that $v \in B'_j$. Then $v$ is the \dpoint of an initially parallel region $B \subset B'_j$ whose remaining \dpoint is a \dpoint of $B'_j$ (Figure~\ref{fig_empty} - we have illustrated the case $i=1$). As $B$ is a region of $\R(\pos_1,\pos_2)$, it has a completion $B'$. Then, letting $v'$ be the \dpoint of $B'$ along $\gamma_1$, we see that, as the boundary of $B'$ is assumed to contain neither of the pair $y_{e_{2j-1}}, y_{e_{2j}}$, then $v'$ is in the interior of two distinct edges of $B_j'$. We thus have a contradiction, as each region of $\R(\pos^*_1,\pos^*_2)$ is (by construction) embedded. 
\end{proof}

\begin{figure}[htb]
\centering
\includegraphics[scale=.5]{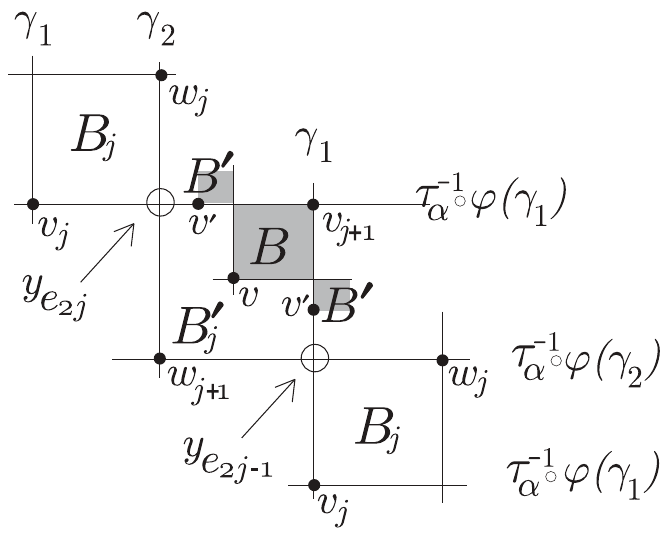}
\caption{Figure for Lemma~\ref{lem_empty}}
\label{fig_empty}
\end{figure}

Observe then that, referring back to the sequence of regions created in Lemma~\ref{lem_index_chain}, if $B_1$ is empty, Lemma~\ref{lem_empty} implies that $B'_1$ is empty, and so by Observation~\ref{nestedness_characterization} it follows that the collection of entries of $\pi_\alpha$ up to the largest index of a basic region of $B'_1$ (i.e. $\{1,2,\ldots, e_3-1\}$) appear in $\pi_\alpha$ as a nested (not necessarily contiguous) subword. Our strategy is then to proceed by modifying the surface $\pg$ via a sequence of punctures, which will have the effect of removing all regions $\mathcal{R}(\pos_1,\pos_2) \setminus \mathcal{R}(\pos^*_1,\pos^*_2)$, and isolating each of the fixed points $y_{e_j}$. This will allow us to generalize Lemma~\ref{lem_structure} to apply to each region of $\mathcal{R}(\pos^*_1,\pos^*_2)$, and then show that the `overlapping regions' which characterize the non-nested case are an obstruction to consistency.

We begin by defining our sequence of modifications. Given $\pos^*_i$ as in Lemma~\ref{lem_index_chain}, let $\pg^j$ be the surface given by deleting a pair of points from a neighborhood of each of the \cpoints $\{ y_{e_k} \ | \ e_k \leq 2j  \}$ as in Figure~\ref{fig_deletion_points}. To be precise, we chose a neighborhood about each of these points such that the restriction of $\{\gamma_1,\gamma_2,\tau_\alpha^{-1}(\mon(\gamma_1)),\tau_\alpha^{-1}(\mon(\gamma_2))\}$ to this neighborhood is a pair of arcs with a single intersection point, so these arcs divide the neighborhood into 4 `quadrants', two of which are filled by the pair of regions for which the given \cpoint is a vertex. We then delete a neighborhood of a point in the interior of each of the remaining two quadrants (Figure~\ref{fig_deletion_points}). Note that this has the effect of removing from $\mathcal{R}(\pos_1,\pos_2)$ each region which contains either of these points, so that each \cpoint is isolated in the new region set.

Observe that our arcs $\gamma_i$ as well as their images are preserved by this construction, so may be viewed as lying on each of the $\pg^j$, and similarly the mapping class $\tau_\alpha^{-1} \circ \mon$ can be viewed as an element of each $\Gamma_{\pg^j}$.

 We then inductively define a sequence of right-positions $\pos^j_i$ for the pair $(\tau_\alpha^{-1} \circ \mon, \gamma_i)$, $i=1,2$, for each surface $\pg^j$, where $\pos_i^0 = \pos_i$, and $\pg^0 = \pg$, and for $j >0$, each $\pos_i^j$ is obtained from $\pos^{j-1}_i$ by removing any point which lies either in the interior of $B_j$ or in the interior of its boundary.

\begin{figure}[htb]
\centering
\includegraphics[scale=.8]{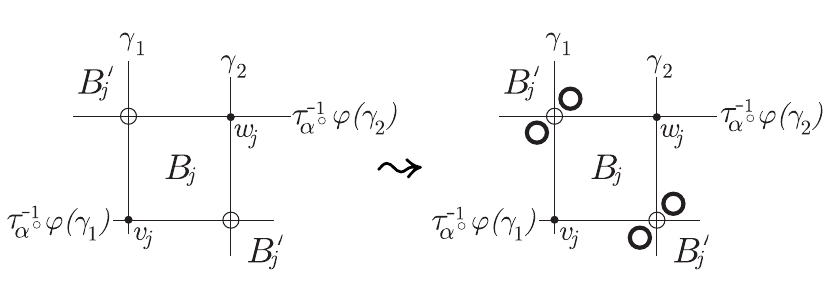}
\caption{Modification $\pg^{j-1} \leadsto \pg^j$. The thick circles indicate boundary components introduced by the modification.}
\label{fig_deletion_points}
\end{figure}

Finally:

\begin{lem}\label{lem_deletion}
 The above modification process is such that, for each $\pg^j$, the pair $\pos_1^j$ and $\pos_2^j$ is consistent, and furthermore $\mathcal{R}(\pos^*_1,\pos^*_2) \subset \mathcal{R}(\pos^j_1,\pos^j_2)$.
\end{lem}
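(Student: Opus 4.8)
The plan is to induct on $j$, establishing simultaneously that $\pos^j_1,\pos^j_2$ are consistent on $\pg^j$ and that $\R(\pos^*_1,\pos^*_2)\subseteq\R(\pos^j_1,\pos^j_2)$. The base case $j=0$ is immediate: $\pg^0=\pg$ and $\pos^0_i=\pos_i$, so consistency is exactly the hypothesis of the lemma, while $\R(\pos^*_1,\pos^*_2)\subseteq\R(\pos_1,\pos_2)$ holds because $\pos^*_i\subseteq\pos_i$ by Lemma \ref{lem_index_chain} --- any initially parallel / completed configuration for $(\pos^*_1,\pos^*_2)$ is witnessed by segments $h_{v,v'}\in H(\pos^*_i)$ whose endpoints, being vertical points of $\pos^*_i$, are vertical points of $\pos_i$, and the witnessing regions depend only on the four arcs, not on the positions.

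For the inductive step, recall that $\pg^j$ is obtained from $\pg^{j-1}$ by deleting a pair of points near each of $y_{e_{2j-1}},y_{e_{2j}}$ --- one in each of the two quadrants at that midpoint not occupied by $B_j$ or $B'_j$ --- and $\pos^j_i$ from $\pos^{j-1}_i$ by removing exactly the points lying in the interior of $B_j$ or of a boundary edge of $\partial B_j$. Emptiness of $B_j$ in $(\pos^j_1,\pos^j_2)$ is then immediate. Isolation of $y_{e_{2j-1}},y_{e_{2j}}$ in $\R(\pos^j_1,\pos^j_2)$ follows because a region having one of these midpoints in its interior, or in the interior of a boundary edge, must in the standard neighborhood cover part of a punctured quadrant and hence contain a deleted point, so it is no longer a region of $\pg^j$; the only regions of $\R(\pos^{j-1}_1,\pos^{j-1}_2)$ meeting that neighborhood are $B_j,B'_j$, which touch the midpoint only at a corner. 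For the inclusion $\R(\pos^*)\subseteq\R(\pos^j)$, I would first note --- feeding the inductive hypothesis through Lemmas \ref{lem_empty} and \ref{lem_structure} --- that the chain regions $B_1,\dots,B_j$ are already empty in $(\pos^{j-1}_1,\pos^{j-1}_2)$, so that the only points of $\pos^{j-1}_i$ lying in $B_j\cup\partial B_j$ are corners of $B_j$ itself; hence no corner $v_k,w_k$ of any chain region is removed, $\pos^*_i\subseteq\pos^j_i$, and the inclusion then follows as in the base case once one checks that the $B_k,B'_k$ survive the punctures, which they do since the punctured neighborhoods of $y_{e_{2j-1}},y_{e_{2j}}$ meet no region of $\R(\pos^{j-1})$ other than $B_j,B'_j$, and those lie in the un-punctured quadrants there.

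The crux, and the step I expect to absorb essentially all the work, is consistency of $(\pos^j_1,\pos^j_2)$. Starting from consistent $(\pos^{j-1}_1,\pos^{j-1}_2)$, two things could go wrong. A stage-$j$ puncture could destroy a completing region of some initially parallel pair without destroying the corresponding initially parallel region; but any region killed by these punctures meets the small neighborhood of $y_{e_{2j-1}}$ or $y_{e_{2j}}$ and, being neither $B_j$ nor $B'_j$, has that midpoint in its interior or boundary-interior, and one checks that its loss orphans no completion requirement because the partner region meets the same neighborhood and is killed as well. Second, the point-removal could manufacture a new initially parallel pair by fusing the two segments of $H(\pos^{j-1}_i)$ meeting at a removed vertical point $v\in B_j$; such a pair can be initially parallel only along a rectangular region $B$ whose edge along $(\tau_\alpha^{-1}\circ\mon)(\gamma_i)$ runs past $v$ and therefore meets the interior of $B_j$, and --- using that $B_j,B'_j$ are preserved, that the midpoints of $B_j$ are isolated, and that $(\pos^{j-1}_1,\pos^{j-1}_2)$ was consistent --- one reads off a completing region for $B$ from the local picture around $B_j$ (compare Figures \ref{fig_b} and \ref{fig_index_chain}). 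Tracking these fused segments against the punctures in a neighborhood of $B_j$ is the delicate point; everything else is bookkeeping against Lemmas \ref{lem_index_chain}, \ref{lem_empty} and \ref{lem_structure}, and once this is in place one iterates Lemma \ref{lem_structure} over the whole chain to finish the proof of Theorem \ref{thm_nested}.
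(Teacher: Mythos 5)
Your overall strategy is the paper's: induct on $j$, use the inductive hypothesis together with Lemmas \ref{lem_empty} and \ref{lem_structure} to get emptiness of the chain regions and isolation of the earlier midpoints, deduce that the chain regions and their corners survive (giving $\R(\pos^*_1,\pos^*_2)\subset\R(\pos^j_1,\pos^j_2)$), and argue that consistency is preserved because a region that dies at stage $j$ takes its completing partner with it. However, the consistency step --- which you correctly identify as the crux --- is where your argument has a genuine gap, in two respects. First, your ``failure mode 2'' is vacuous: $H(\pos)$ is by definition the set of segments $[v,v']$ over \emph{all} pairs $v,v'\in\pos$, not just consecutive ones, so deleting points from $\pos^{j-1}_i$ never creates a new horizontal segment and never creates a new initially parallel pair; $\R(\pos^j_1,\pos^j_2)$ injects into $\R(\pos^{j-1}_1,\pos^{j-1}_2)$. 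Second, and more seriously, the failure mode you do need to handle is broader than the one you treat. A completing region $B'$ can be lost not only because a puncture lands inside it, but because one of its \emph{endpoints} $v'$ or $w'$ lies in $B_j$ and is therefore deleted from the position. Your ``mode 1'' addresses only regions killed by punctures, and the justification you offer there --- that the partner region ``meets the same neighborhood and is killed as well'' --- is not the correct mechanism and is not established: the partner need not meet the neighborhood of the same midpoint at all.

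The actual content of the paper's proof is precisely the claim you leave to ``one checks'': if $B\in\R(\pos^{j-1}_1,\pos^{j-1}_2)$ fails to survive (meaning it has an endpoint in $B_j$ \emph{or} contains one of $y_{e_{2j-1}},y_{e_{2j}}$), then every completing region of $B$ also fails to survive. The argument is that $B$ cannot contain all of $B_j$ (any such region would already have been killed at an earlier stage), so --- using the inductively established emptiness of $B'_{j-1}$ and isolation of $y_{e_{2j-3}},y_{e_{2j-2}}$ --- whichever of the two failure conditions holds forces $B$ to have a \emph{midpoint} in $B_j$; and a region with a midpoint in $B_j$ can only be completed by a region that either contains one of $y_{e_{2j-1}},y_{e_{2j}}$ or has an endpoint in $B_j$, hence also dies. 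Since completion is symmetric ($B$ and $B'$ complete each other), this single implication yields ``$B$ survives $\Rightarrow$ $B'$ survives,'' which is what preserves consistency. Without this midpoint-in-$B_j$ argument your proof does not close; with it, the endpoint-deletion case and the puncture case are handled uniformly, and the rest of your outline (in particular the appeal to Lemma \ref{lem_structure} for $\R(\pos^*_1,\pos^*_2)\subset\R(\pos^j_1,\pos^j_2)$) goes through as in the paper.
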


\begin{proof}
The proof is by induction on $j$: The base step, that on $\pg$ the original positions $\pos_i$ satisfy the conditions, is trivial. Suppose then that the conditions hold for $\pg^{j-1}, \pos^{j-1}_1$, and $\pos^{j-1}_2$. Observe that, as the deletion process ensures that for each $k<j$, $B_k$ is empty, and each pair $y_{e_{2k-3}}, y_{e_{2k-2}}$ isolated, it follows from Lemma~\ref{lem_empty} that each region $B_1,B'_1,B_2, \ldots ,B_{j-1}, B'_{j-1}$ is empty in $(\pos^{j-1}_1,\pos^{j-1}_2)$. Consider then $\pg^j$, $\pos_1^j$ and $\pos_2^j$.

We first show that the pair ($\pos_1^j, \pos_2^j$) is consistent: there is an obvious inclusion of $\mathcal{R}(\pos^{j}_1,\pos^{j}_2)$ in $\mathcal{R}(\pos^{j-1}_1,\pos^{j-1}_2)$, so we must show that, if $B \in \mathcal{R}(\pos^{j-1}_1,\pos^{j-1}_2)$ does not survive the deletion process (i.e. $B$ has an \dpoint in $B_j$ or contains one of $y_{e_{2j-1}},y_{e_{2j}}$), then the same is true for any completing region $B'$. Figure~\ref{fig_deletion_regions} indicates the setup. Observe that $B$ cannot contain $B_j$; this is of course obvious for $j=1$, while for $j>1$ follows easily, as each such region either has an \dpoint in $B'_{j-1}$ or contains one of the points $\{y_{e_k} \ | \ k \leq 2j-2\}$. Suppose then that $B$ is a region with \dpoint $v$ in $B_j$ (Figure~\ref{fig_deletion_regions}(a)). Then, as $B'_{j-1}$ is empty (for $j>1$), and the pair $y_{e_{2j-3}},y_{e_{2j-2}}$ isolated, $B$ has a \cpoint in $B_j$. But then any completing region $B'$ either contains one of the points $\{y_{e_{2j-1}},y_{e_{2j}}\}$, or has an \dpoint in $B_j$. In either case $B'$ does not survive. If, on the other hand, $B$ has no \dpoint in $B_j$, then $B$ must contain one of the points $\{y_{e_{2j-1}}, y_{e_{2j}}\}$ (Figure~\ref{fig_deletion_regions}(b)), and again have a \cpoint in $B_j$, so any completing $B'$ again does not survive.

It is left then to show that no region of $\mathcal{R}(\pos^*_1,\pos^*_2)$ is removed through the deletion process. For the initial deletion, this follows by construction, as $B_1$ is of course empty and has isolated \cpoints in $(\pos^*_1,\pos^*_2)$, while the general case follows easily from Lemma~\ref{lem_structure_2}. Indeed, in the triple $(\pg^{j-1}, \pos^{j-1}_1$, $\pos^{j-1}_2)$, the region $B'_{j-1}$ is empty, and the points $\{y_{e_{2j-3}},y_{e_{2j-2}}\}$ are isolated, so by \ref{lem_structure_2} the same holds for $(B'_{j-1})^c = B_j$.
\end{proof}

\begin{figure}[htb]
\centering
\includegraphics[scale=.8]{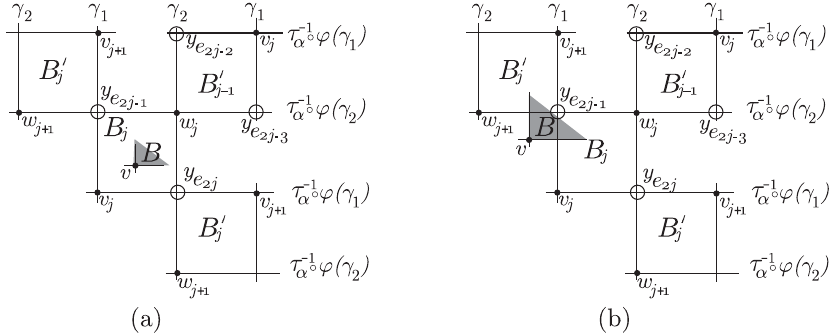}
\caption{The setup for the argument of Lemma  \ref{lem_deletion}. As usual, the figure illustrates the situation in the universal cover of the surface; lifts of a common object are labeled identically.}
\label{fig_deletion_regions}
\end{figure}

We are now able to complete Theorem~\ref{thm_nested}:

\begin{proof} (of Theorem~\ref{thm_nested}) Let $\alpha \in p.e.(\mon)$. By Corollary \ref{lem_balanced}, $\alpha$ is balanced, so  let $\pos_i$ be consistent right positions for $(\tau_\alpha^{-1} \circ  \mon, \gamma_i)$, $i=1,2$, and let $\pos^*_i \subset \pos_i$ be as in Lemma~\ref{lem_index_chain}. But by Lemma~\ref{lem_deletion}, each region $B \in \mathcal{R}(\pos^*_1, \pos^*_2)$ is empty in $\mathcal{R}(\pos^*_1,\pos^*_2)$, with isolated \cpoints , and so by Lemma~\ref{lem_structure} each $B$ is nested. Thus, using Observation~\ref{nestedness_characterization}, $\alpha$ is nested.  
\end{proof}

\section{Non-positive open books of Stein-fillable contact 3-folds}\label{sec_e}

In this section we prove Theorem \ref{thm_stein_but_not_positive} by constructing explicit examples of open book decompositions which support Stein fillable contact structures yet whose monodromies have no positive factorizations. We first introduce a construction, based on a modification of the lantern relation, which allows us to introduce essential left-twisting into a stabilization-equivalence class of open book decompositions. As, by Giroux, elements of such an equivalence class support a common contact manifold, proving the essentiality of this left-twisting (accomplished here using the methods of the previous sections) is sufficient to produce examples of non-positive open books which support Stein-fillable contact structures.

\subsection{Immersed lanterns}

For our construction, we recall the definition of a stabilization of an open book decomposition:

\begin{defin}\label{def_stabilization}
Given open book decomposition $\obd$, let $\sigma$ be a properly embedded arc in $\pg$. Then if $\pg'$ denotes the result of adding a 1-handle to $\pg$ with attaching sphere $\partial (\sigma)$, and $s$ the (unique up to isotopy) simple closed curve in $\pg'$ obtained by extending $\sigma$ over the core of this handle, then the open book decomposition $(\pg',\tau_s \circ \mon)$ is a \emph{stabilization} of $\obd$. The inverse operation is referred to as \emph{destabilization}.
\end{defin}

We start out then with a surface $\pg_{0,4}$ of genus zero with 4 boundary components, and a mapping class with factorization $\tau_{\epsilon_1} \tau_{\epsilon_2}$, where $\epsilon_1$ and $\epsilon_2$ intersect exactly twice, with opposite sign (so $\pg_{0,4}$ is just a regular neighborhood of the pair - Figure~\ref{fig_folded_lantern}(a)). It is easy to see that this defines an open book decomposition of $S^1 \times S^2$ with the standard (and unique) Stein fillable contact structure. We use the well-known lantern relation to give a mapping class equivalence between the words indicated in Figure~\ref{fig_folded_lantern}(a) and (b), thus introducing a left twist about the curve $\alpha$ into the positive monodromy. To make room for what is to come, we must enlarge the surface by adding a 1-handle (Figure~\ref{fig_folded_lantern}(c)) to the surface, so that the associated open book decomposition is now of $\#^2(S^1 \times S^2)$, with its (also unique) Stein fillable contact structure. We then stabilize, use this stabilization curve to braid two of the lantern curves into a new configuration in which the lantern relation does not apply, and then destabilize to a book in which, as the lantern is unavailable, the left twist about $\alpha$ can no longer be canceled. The steps are indicated clearly in the remainder of Figure~\ref{fig_folded_lantern}. As none of the steps $(c)$ through $(f)$ affect the supported contact structure, we have obtained an open book decomposition, supporting a Stein-fillable contact structure, which has no obvious positive factorization. We refer to this construction as an \emph{immersed} lantern relation. To motivate this terminology, observe that the surface and curves of Figure~\ref{fig_folded_lantern}(f) are obtained from those of Figure~\ref{fig_folded_lantern}(b) by a self plumbing of the surface. Figure~\ref{fig_immersed_lantern} summarizes the construction.

\begin{figure}[h!]
\centering \scalebox{.8}{\includegraphics{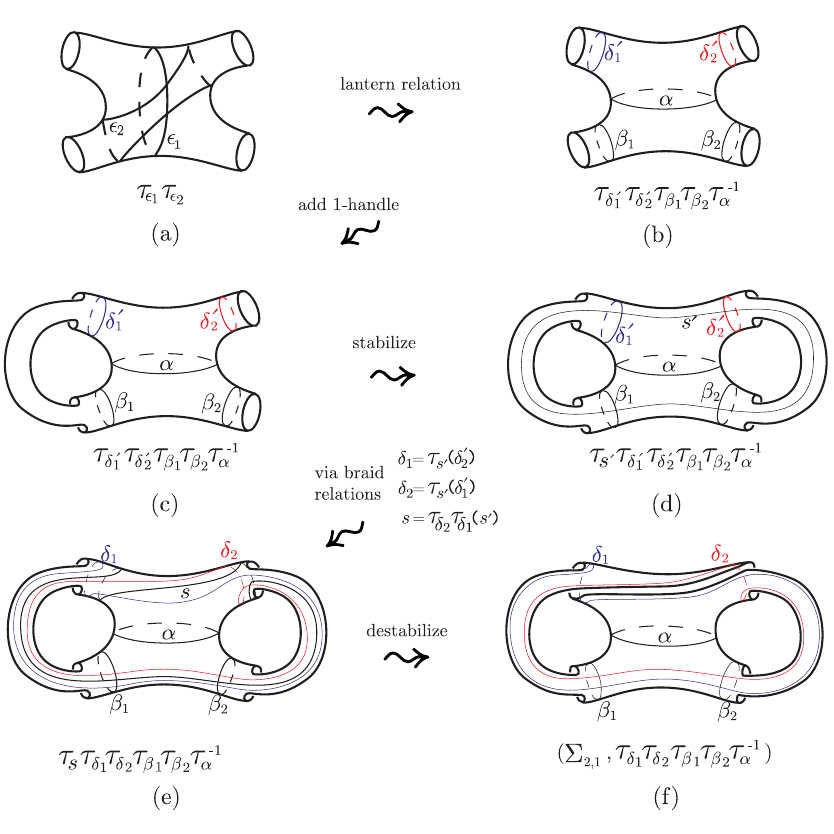}}
\caption[Construction of the counterexample]{$(\#^2 (S^1 \times S^2),\xi_{st})$ is supported by $(
\pg_{2,1}, \tau_{\delta_1} \tau_{\delta_2} \tau_{\beta_1} \tau_{\beta_2} \tau^{-1}_\alpha )$.}
\label{fig_folded_lantern}
\end{figure}

\begin{figure}[h!]
\centering \scalebox{.7}{\includegraphics{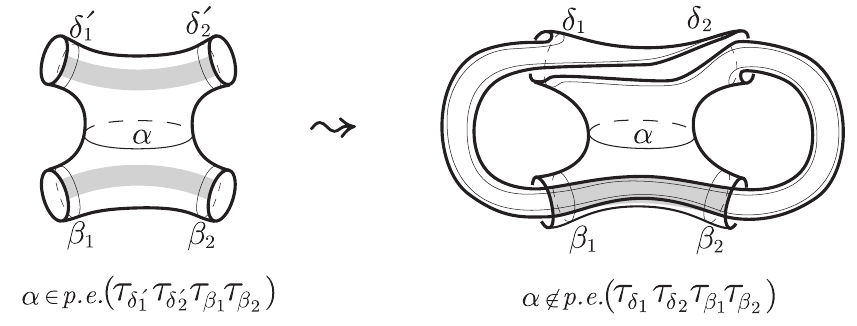}}
\caption[Immersed lantern]{Immersing a lantern: To the left, curves of the lantern relation. In particular, a positive twist about each boundary parallel curve is sufficient to cancel a negative twist about the curve $\alpha$. To the right, the curves in an `immersed' configuration for which the cancellation no longer holds. Topologically, one obtains the immersed configuration by a self-plumbing of the surface; i.e. an identification of the shaded rectangles via a $90^\circ$ twist as in the figure. As open book decompositions, one gets from one picture to the other by the steps of Figure~\ref{fig_folded_lantern}. The effect on the contact manifold is a connect sum with $S^1 \times S^2$, with the standard (Stein-fillable) contact structure.}
\label{fig_immersed_lantern}
\end{figure}

The remainder of this section uses the results of Section \ref{sec_d} to show
\begin{thm}\label{thm_example}
Let $\mon_{e_1,e_2} = \tau_{\delta_1} \tau_{\delta_2} \tau^{e_1}_{\beta_1} \tau^{e_2}_{\beta_2} \tau^{-1}_\alpha$,  where all curves, and the surface $\pg$ in which they exist, are as in Figure~\ref{fig_folded_lantern}(f). Then for each pair of positive integers $e_1$ and $e_2$, the open book decomposition $(\pg,\mon_{e_1,e_2})$ supports a Stein fillable contact structure, yet $\mon_{e_1,e_2}$ admits no factorization into positive Dehn twists.
\end{thm}

As a simple example, for the case $e_2=1$, it is simple to verify (e.g. by tracing backwards through Figure~\ref{fig_folded_lantern}) that we obtain an open book decomposition of $(S^1
\times S^2) \# L(e_1-1,e_1-2)$ for each $e_1 \geq 2$ (Figure~\ref{fig_generalization}).

\begin{figure}[htb]
\centering
\includegraphics[scale=.6]{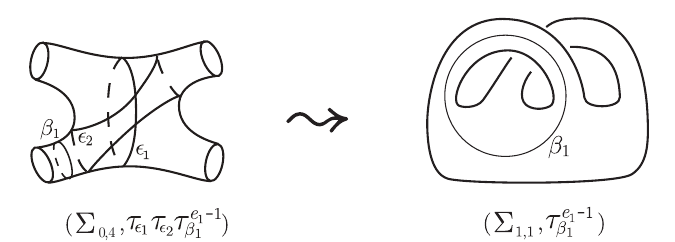}
\caption[Generalizations]{On may easily verify that the open book to the left may be destabilized twice to give the one to the right, which obviously supports $((S^1 \times S^2) \#
L(e_4-1,e_4-2),\xi_{st})$}
\label{fig_generalization}
\end{figure}

\subsection{The positive extension of $\mon_{e_1,e_2}$}

Recall (Definition \ref{def_pe}) that the \emph{positive extension}, $p.e.(\mon)$, of a positive mapping class $\mon$ is simply the set of all simple closed curves $\alpha$ such that $\tau_\alpha$ appears in a positive factorization of $\mon$.  Our method of demonstrating essentiality of the left twisting introduced by the immersed lantern is as follows: Let $\mon = \mon_{e_1,e_2}$ be as in Theorem \ref{thm_example}, for arbitrary positive integers $e_1$ and $e_2$, and define $\mon' := \tau_\alpha \circ \mon = \tau_{\delta_1}\tau_{\delta_2}\tau^{e_1}_{\beta_1}\tau^{e_2}_{\beta_2}$ (note that $\alpha$ has no intersection with any of the other curves, so $\tau_\alpha$ commutes with each twist). Suppose that
$\mon$ has positive factorization $\fac$. Then we may
factorize $\mon' = \tau_\alpha \fac$. Our goal is then to derive a
contradiction by showing that $\alpha \not\in p.e. (\mon')$. The argument, to which the remainder of this subsection is devoted, comprises two steps. Firstly, we show that $\alpha$ has trivial intersection with each curve in $p.e.(\mon')$, and secondly use this to conclude that $\alpha \not\in p.e.
(\mon')$.

For the first step, we wish to apply the results of Section
\ref{sec_d} to the pair $\gamma_1, \gamma_2$ shown in Figure~\ref{fig_examples_1} to conclude that each curve in $p.e.(\mon')$ may be isotoped so as not to intersect $\alpha$. Note that $\mon(\gamma_i)$ is isotopic to $\tau_{\delta_1}\tau_{\delta_2}(\gamma_i)$ for each $i$ - in particular, this data does not depend on $e_1$ and $e_2$. Now, $\pg \setminus \{\gamma_1,
\gamma_2\}$ is a pair of pants bounded by $\beta_1, \beta_2$, and
$\alpha$, and so these three curves are the only elements of
$SCC(\pg)$ which have no intersection with $\{\gamma_1, \gamma_2\}$.
The pair $\mon(\gamma_1)$,$\mon(\gamma_2)$ is flat (Definition \ref{def_flat}), and so by Theorem
\ref{thm_nested}, each curve $\epsilon \in p.e.(\mon') \setminus
\{\beta_1, \beta_2, \alpha\}$ is nested.

So as to simplify our view of the situation, we begin by cutting $\pg$ along $\bar{\gamma_i}, i=1,2$ (see Figure~\ref{fig_examples_1}). The
resulting surface is a pair of pants $\pg'$ with boundary components $\partial_i, i=1,2,3$, labelled such that $\alpha$ is parallel to $\partial_1$. As $\epsilon$ is balanced, we have $\# | \epsilon \cap \bar{\gamma_1} | = \#  | \epsilon \cap \bar{\gamma_2} |$.

 Examples are given in Figure~\ref{fig_examples_2}.

\begin{figure}[htb]
\centering
\includegraphics[scale=.7]{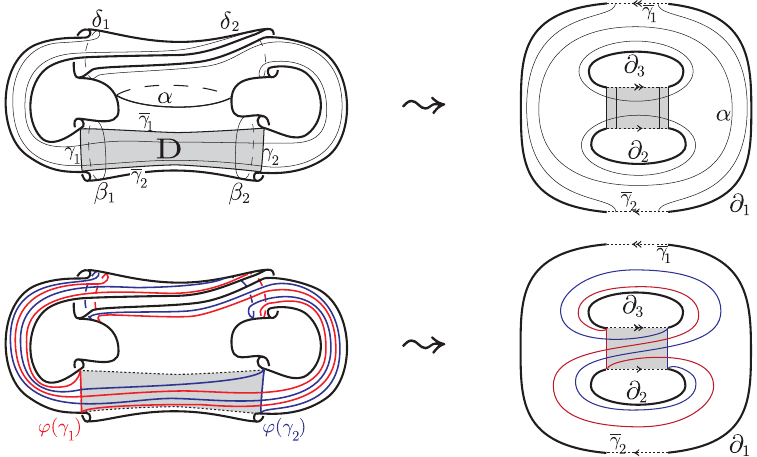}
\caption[Construction of $\pg'$]{To the left, various curves and arcs in the surface $\pg$. To the right, the result $\pg'$ of cutting $\pg$ along the $\bar{\gamma_i}$.}
\label{fig_examples_1}
\end{figure}

\begin{figure}[h!]
\centering \scalebox{.7}{\includegraphics{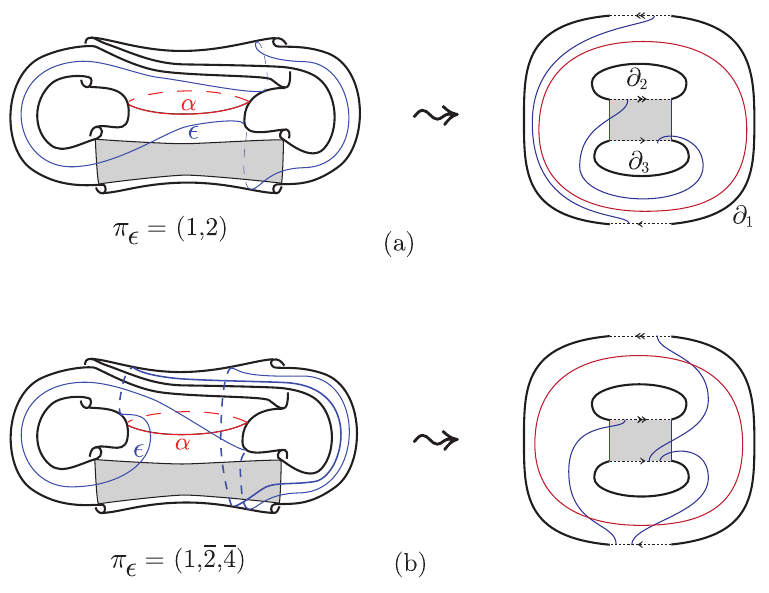}}
\caption[Curve examples on $\pg'$]{(a) A nested curve $\epsilon$ in $\pg$, and the result of cutting $\pg$ as described above. (b) A curve $\epsilon$ which is neither nested nor balanced.}
\label{fig_examples_2}
\end{figure}

We summarize various observations concerning arcs $\epsilon \cap \pg'$, for $\epsilon \in SCC(\pg)$, in the following lemma and corollary:

\begin{lem}\label{lem_arcs}
Let $\pg,\pg',\alpha$ and $D$ be as above. Then if $\epsilon$ is a nested curve on $D$,

\begin{enumerate}

\item No component of $\epsilon \cap \pg'$ has both endpoints in $\partial_2$ or both endpoints in $\partial_3$.

\item Any component of $\epsilon \cap \pg'$ with both endpoints in $\partial_1$ is boundary-parallel.

\end{enumerate}  
\end{lem}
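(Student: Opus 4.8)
The plan is to prove both statements by combining the \emph{balanced} and \emph{nested} hypotheses with the combinatorial structure of $\pi_\epsilon$ and the geometry of arcs on the pair of pants $\pg'$. For statement (1), I would argue by contradiction: suppose some component $\eta$ of $\epsilon \cap \pg'$ has both endpoints on $\partial_2$ (the case of $\partial_3$ is symmetric). Recall that the endpoints of components of $\epsilon\cap\pg'$ are exactly the points $x_i \in \epsilon \cap \gamma_i$, and that the \emph{balanced} property says each $x_i$ is an endpoint of a symmetric path or a component of a symmetric multipath. But in Definition \ref{def_multipath} a symmetric path $\eta_{j,j-1}$ (or $\eta_{j-1,j}$) connects $x_j$ (on $\gamma_2$, $j$ even) to $x_{j-1}$ (on $\gamma_1$, $j-1$ odd); after cutting along $\bar\gamma_i$, following the discussion just before the lemma, this means the component containing $x_j$ must have one endpoint on $\partial_2$ and one on $\partial_3$, \emph{not} both on $\partial_2$. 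So I would show that the only way for a symmetric multipath component to land both of its endpoints on the same boundary component $\partial_2$ of $\pg'$ is for the index pair $\{a,b\}$ it realizes to violate the form $\{j,j-1\}$ required by Definition \ref{def_multipath}, and conversely that a component with both endpoints on $\partial_2$ cannot be part of any symmetric multipath; since $\epsilon$ is balanced, every such $x_i$ must be in one, giving a contradiction.

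For statement (2), suppose $\eta$ is a component of $\epsilon\cap\pg'$ with both endpoints on $\partial_1$, the boundary component parallel to $\alpha$. An essential (non-boundary-parallel) arc on a pair of pants with both endpoints on the same boundary circle $\partial_1$ must separate the other two boundary circles $\partial_2,\partial_3$ from each other. I would then track what this forces on the remaining arcs of $\epsilon\cap\pg'$: any other component of $\epsilon\cap\pg'$ must lie entirely in one of the two complementary pieces of $\pg'\setminus\eta$, hence cannot have one endpoint on $\partial_2$ and one on $\partial_3$ — but by statement (1), \emph{every} other component that meets $\partial_2$ or $\partial_3$ does exactly that, while by the balanced property every $x_i$ with $i$ even lies on $\gamma_2$ (hence on $\partial_2$ or $\partial_3$ after cutting) and is an endpoint of a symmetric component. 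So if there is even one essential $\partial_1$-to-$\partial_1$ arc and $m>1$, one gets a component forced to cross $\eta$, contradicting that the arcs of $\epsilon\cap\pg'$ are embedded and pairwise disjoint. This reduces to showing the only consistent possibility is that $\eta$ is boundary-parallel (around $\partial_1$), which is exactly the assertion.

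The main obstacle I anticipate is statement (1): making precise the translation between the cyclic word $\pi_\epsilon$ (with its bar-decorations recording derived orientations) and the boundary-labelling $\{p_i,q_i,r_i\}$ of $\partial\pg'$ in Figure \ref{fig_ce_a}, so that ``symmetric multipath component'' provably implies ``one endpoint on $\partial_2$, one on $\partial_3$''. In particular I need to be careful that the definition of \emph{symmetric} (Definition \ref{def_multipath}) pins down not just the unordered index pairs but their parity, and that the cutting operation sends the even-indexed endpoints on $\gamma_2$ to the $q$-points (on $\partial_2$) and $r$-points (on $\partial_3$) in a way compatible with the orientation data. Once that dictionary is set up, both parts should follow from the elementary fact that an essential arc on a pair of pants is determined up to isotopy by the pair of boundary circles it connects, together with disjointness of the components of $\epsilon\cap\pg'$.
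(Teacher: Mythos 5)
Your argument for part (1) has a genuine gap, rooted in a misidentification of the objects involved. The endpoints of the components of $\epsilon\cap\pg'$ are the points of $\epsilon\cap\bar{\gamma_i}$ (the arcs along which $\pg$ is cut), not the points $x_i\in\epsilon\cap\gamma_i$; the $x_i$ lie in the \emph{interior} of $\pg'$. Consequently the balanced condition, which says each $x_i$ is an endpoint of a component of a symmetric (multi)path, does not directly constrain the individual arcs of $\epsilon\cap\pg'$: a path $\eta_{a,b}$ is a long sub-arc of $\epsilon$ running from $x_a$ to $x_b$ and is in general a concatenation of many components of $\epsilon\cap\pg'$, and a given component of $\epsilon\cap\pg'$ need not be (part of) any symmetric multiarc at all. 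The observation stated just before the lemma (``each component of each \emph{symmetric multiarc} either has each endpoint on $\partial_1$, or one endpoint on each of $\partial_2$ and $\partial_3$'') applies only to symmetric multiarcs, so the step ``a component with both endpoints on $\partial_2$ cannot be part of a symmetric multipath, hence balancedness is violated'' does not close. A symptom of the problem is that your argument never invokes the type~1 hypothesis, yet without it statement (1) is false: a downward arc of $\epsilon\cap D$ produces exactly a boundary-parallel component with both endpoints on $\partial_2$ (or $\partial_3$). The paper's proof of (1) is purely topological and runs differently: a boundary-parallel component with both ends on $\partial_2$ corresponds either to a bigon with some $\bar{\gamma_i}$ (excluded by intersection minimality) or to a downward arc of $\epsilon\cap D$ (excluded by type~1); an \emph{essential} component with both ends on $\partial_2$ then separates $\pg'$ into two annuli, and a count of boundary points in the annulus containing $\partial_3$ forces a boundary-parallel component on $\partial_3$, reducing to the first case.

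For part (2) your separation idea is the right one and matches the paper, but it both depends on your (1) and contains an incorrect intermediate claim: it is not true that every component meeting $\partial_2$ or $\partial_3$ must run from $\partial_2$ to $\partial_3$ --- statement (1) still allows (and the configurations of interest contain) components from $\partial_2$ or $\partial_3$ to $\partial_1$. The correct conclusion of the separation argument is a count: after an essential $\partial_1$-to-$\partial_1$ arc separates $\partial_2$ from $\partial_3$, each of the $m/2$ points on $\partial_2$ and each of the $m/2$ points on $\partial_3$ would have to be joined to one of the $m-2$ remaining points of $\partial_1$ lying in its own complementary piece, which is impossible and forces an arc parallel to $\partial_2$ or $\partial_3$, contradicting (1).
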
 

\begin{proof}

For (1), suppose there is some component $\gamma$ of $\epsilon \cap \pg'$ with each endpoint on $\partial_2$. Now if $\gamma$ is boundary-parallel, then it corresponds to either a bigon bounded by $\epsilon$ and one of the $\overline{\gamma_i}$, contradicting intersection minimality, or a downward arc $\epsilon \cap D$, contradicting nestedness. The same argument shows that there is no boundary parallel arc with both endpoints on $\partial_3$.  If however $\gamma$ is \emph{not} boundary parallel, it separates $\pg'$ into two annulus components, one of which contains a proper subset of $\epsilon \cap \partial_2$ on one boundary component, while the other boundary component is the original $\partial_3$. As nestedness implies that $\#|\epsilon \cap \partial_2| =  \#|\epsilon \cap \partial_3|$, this second annulus must then contain some boundary parallel arc connecting points of $\partial_3$, a contradiction.

For (2), any arc with each endpoint on $\partial_1$ which is not boundary-parallel would separate $\pg'$ into two annuli so that one annulus has the points $\epsilon \cap \partial_2$ on one boundary component, and some subset of the points $\epsilon \cap \partial_1$ on the other, while the other annulus has the points $\epsilon \cap \partial_3$ on one boundary component, again some subset of the points $\epsilon \cap \partial_1$ on the other. But, letting $m = \#|\epsilon \cap \partial_2| =   \#|\epsilon \cap \partial_3| = 1/2(\#|\epsilon \cap \partial_1|)$, there are a total of $2m-2$ remaining points of $\epsilon \cap \partial_1$. This would then necessitate some arc parallel to $\partial_2$ or $\partial_3$, contradicting (1).

\end{proof}

\begin{cor}\label{cor_arcs}
Given $\pg, \delta_1,$ and $\delta_2$ as above (Figure~\ref{fig_examples_1}), let $\psi \in Dehn^+(\pg)$ be such that $\psi(\gamma_i)$ is isotopic to $\tau_{\delta_1} \tau_{\delta_2}(\gamma_i)$, for $i=1,2$. Then for each $\epsilon \in p.e(\psi)$, any horizontal component of $\epsilon \cap D$ is contained in a component of $\epsilon \cap \pg'$ which intersects neither $\psi(\gamma_i)$ and has endpoints on $\partial_2$ and $\partial_3$. Similarly, any vertical component  of $\epsilon \cap D$ is contained in a component of $\epsilon \cap \pg'$ which intersects neither $\gamma_i$ and has endpoints on $\partial_2$ and $\partial_3$.

\end{cor}

\begin{proof}

Let $\sigma$ be a connected component of $\epsilon \cap \pg'$ which contains a horizontal segment of $\epsilon \cap D$. Now, such $\sigma$ is not parallel to $\partial_1$, so Lemma \ref{lem_arcs} tells us that the endpoints of $\sigma$ are on distinct boundary components. Assume then that $\sigma$ has an endpoint on $\partial_2$ (the case of endpoint on $\partial_3$ is identical). We consider firstly the case that the remaining endpoint is on $\partial_3$. As $\delta_1$ and $\delta_2$ are isotopic to (respectively) $\partial_2$ and $\partial_3$,  $\sigma$ is isotopic to $\tau_{\delta_1}^{t_1}\tau_{\delta_2}^{t_2}(\gamma)$, for some integers $t_1$ and $t_2$, and properly embedded arc $\gamma$ parallel to $\gamma_2$. Now, if either $t_i$ is negative, we would have a downward arc of $\epsilon \cap D$ (Figure~\ref{fig_examples_3}(a)). On the other hand, if either $t_i$ is greater than 1, we would have (for $i=2$) $\tau_\epsilon(\gamma_2) > \psi(\gamma_2)$ (from $c'_2$) (Figure~\ref{fig_examples_3}(b)). We thus have $t_i \in \{0,1\}$ for each $i$; it is immediate to check that the case $t_1=t_2=1$ is the unique case containing a horizontal arc, and further that it intersects neither $\mon(\gamma_i)$, as desired.

Suppose then that the remaining endpoint is on $\partial_1$, so that $\sigma$ is isotopic to $\tau_{\delta_1}^{t_1}\tau_{\alpha}^{t_3}(\gamma')$, where $\gamma'$ is an arc from $\partial_1$ to $\partial_3$, satisfying $\gamma' \cap \gamma_1 = \emptyset$ and $\#|\gamma' \cap \gamma_2| = 1$  (Figure~\ref{fig_examples_3}(c)). Then, for horizontality, we must have $t_1 >0$, in which case again  $\tau_\epsilon(\gamma_2) > \psi(\gamma_2)$ (from $c'_2$), giving a contradiction.

The vertical case is of course trivial, as a vertical arc of $\epsilon \cap D$ is by definition itself an arc of $\epsilon \cap \pg'$ which intersects neither $\gamma_i$ and has endpoints on $\partial_2$ and $\partial_3$.

\end{proof}

\begin{figure}[h!]
\centering \scalebox{.7}{\includegraphics{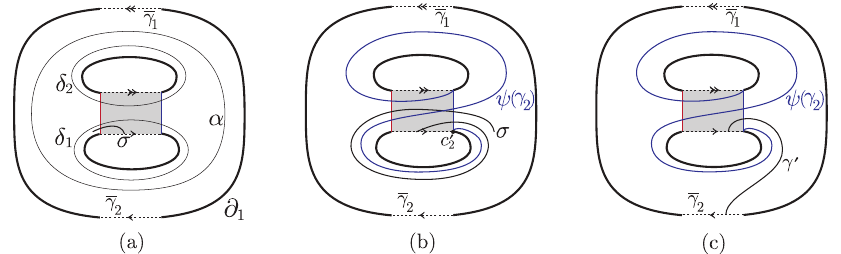}} 
\caption[]{Arcs of $\epsilon \cap \pg'$}
\label{fig_examples_3}
\end{figure}

Throughout the rest of this section, we will be interested in positive factorizations of a given monodromy only up to braid relations. As such, we write $\omega \bequiv \omega'$ to indicate that $\omega,\omega' \in Fac^+(\psi)$ are related through braid relations, and $[\omega]_b$ for the associated equivalence class. Our main lemma is the following:

\begin{lem}\label{lem_no_diagonal}
Let $\psi$ be as in the previous lemma, and $\omega$ a positive factorization of $\psi$. Then there is $\omega' \bequiv \omega$ such that no curve in $\omega'$ has any diagonal arc on $D$.
\end{lem}

\begin{proof}

Observe firstly that, after braiding, we may assume any given factorization of $\psi$ is $\tau_\alpha^e \omega_h \omega_d \omega_v$, for some $e \geq 0$, where the restriction to $D$ of each curve involved in $\omega_h$ is a collection of horizontal arcs, those in $\omega_v$ are vertical, and each element of $\omega_d$ has some diagonal arc. We will further assume our given factorization is chosen from its braid-equivalence class such that the length of $\omega_d$ is minimized, and non-zero, and derive a contradiction. Let $\omega_d = \tau_{\epsilon_n}\tau_{\epsilon_{n-1}}\cdots\tau_{\epsilon_1}$. 

Throughout the remainder of the proof, we will refer to a curve $\epsilon \in SCC(\pg)$ as \emph{vertical (horizontal)} if its restriction to $D$ contains a vertical (horizontal) arc. We begin with the case that no $\epsilon_i$ is horizontal. Recalling the ordering on arcs with common endpoint described at the beginning of this paper (Example~\ref{example_arcs}), observe that we may order SCC($\pg$) by considering the images of a given arc under twists about the curves. For our purposes, for $\epsilon,\epsilon' \in SCC(\pg)$, we set $\epsilon \geq \epsilon' \Leftrightarrow \tau_{\epsilon}(\gamma_1) \geq \tau_{\epsilon'}(\gamma_1)$ from $c_1$. 

We let $S$ denote the set of elements of $[\omega_d]_b$ which maximize the index $m := max\{i \ | \ \epsilon_i \geq \epsilon_j \forall j \leq i\}$, and then $S' \subset S$ those elements of $S$ for which the intersection number $|\epsilon_m \cap \gamma_1|$ is minimized (note that, as each curve is nested, any curve whose intersection with $D$ contains a diagonal arc intersects each of $\gamma_1$ and $\gamma_2$; in particular $|\epsilon_m \cap \gamma_1| > 0$). We may then assume $\omega_d \in S'$.

 Now, let $x$ denote the initial (from $c_1$) point of $\tau_{\epsilon_m}(\gamma_1) \cap \bar{\gamma_1}$. As $\epsilon_m$ is not horizontal, $[c_1,x]_{\tau_{\epsilon_m}(\gamma_1)}$ is contained in $D$. As $\psi(\gamma_1)$ is flat, there is some $k>m$ such that $x$ is a vertex of a downward triangular region of the triple $(\epsilon_k,\tau_{\epsilon_m}(\gamma_1),\bar{\gamma}_1)$ (Figure~\ref{fig_examples_4}(a)). After braiding, we may assume $k=m+1$. Now, as $\epsilon_k$ is nested, the segment of $\epsilon_k \cap D$ which extends from this triangular region cannot be upward on $c_2$, thus is vertical. We then have a bigon of $D_{\epsilon_k}(\tau_{\epsilon_m}(\gamma_1))$ and $\bar{\gamma_2}$; as $\tau_{\epsilon_k}\tau_{\epsilon_m} (\gamma_1)$ cannot be further to the right (from $c_1$) than $\psi(\gamma_1)$, we must have a cancelling bigon (Figure~\ref{fig_examples_4}(b)), and point $x' \in \tau_{\epsilon_m}(\gamma_1) \cap \bar{\gamma_2}$ corresponding to a vertex of this bigon (Figure~\ref{fig_examples_4}(c)). 

\begin{figure}[htb]
\centering
\includegraphics[scale=1]{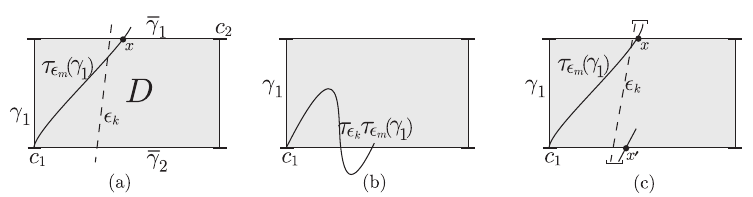}
\caption[]{}
\label{fig_examples_4}
\end{figure}

Let $A$ denote a closed support neighborhood of the twist diffeomorphism $D_{\epsilon_m}$. We label the points $\gamma_1 \cap \partial A$ as $z_0,z_1,z_2,\ldots$ increasing along $\gamma_1$ from $c_1$ (so e.g. for even $i$, $[z_i,z_{i+1}]_{\gamma_1}$ is contained in $A$) (Figure~\ref{fig_examples_A}(a)). Each $z_i$ is thus in $D_{\epsilon_m}(\gamma_1) \cap \gamma_1$; as  $D_{\epsilon_m}(\gamma_1)$ and  $\gamma_1$ bound no bigons, we may assume $\tau_{\epsilon_m}(\gamma_1)$ passes through each $z_i$, so, for even $i$, $[z_i,z_{i+1}]_{\tau_{\epsilon_m}(\gamma_1)}$ is isotopic to $\tau_{\epsilon_m}([z_i,z_{i+1}]_{\gamma_1})$. Let $r$ denote $max\{ i \ | \ z_i \in [x,x']_{\tau_{\epsilon_m}(\gamma_1)}\}$, so $\epsilon_k \cap D$ has $r+1$ connected components (Figure~\ref{fig_examples_A}(b)).

\begin{figure}[htb]
\centering
\includegraphics[scale=.9]{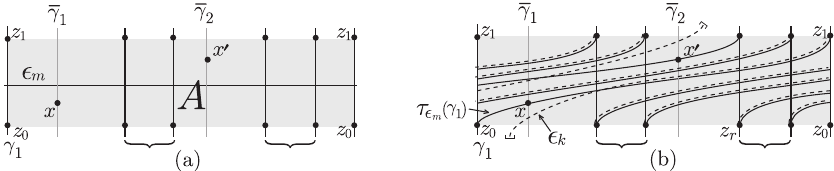}
\caption[]{The neighborhood $A$ of $\epsilon_m$, and the various arcs it intersects. Dark vertical lines are segments of $\gamma_1$.}
\label{fig_examples_A}
\end{figure}

\begin{figure}[htb]
\centering
\includegraphics[scale=.9]{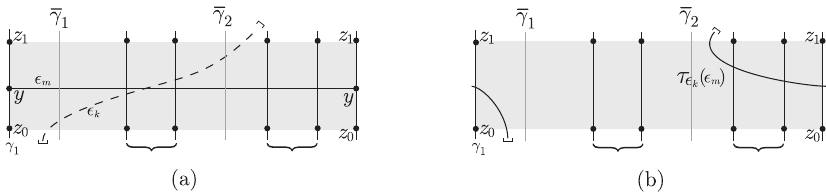}
\caption[]{}
\label{fig_examples_5}
\end{figure}

We note firstly that $r \geq 1$. Indeed, supposing otherwise, if $y$ denotes the unique point $\epsilon_m \cap [z_0,z_1]_{\gamma_1}$ , we see that $y$ is not upward in $(\epsilon_k,\gamma_1,\epsilon_m)$ (Figure~\ref{fig_examples_5}(a)), and so $\tau_{\epsilon_k}(\epsilon_m) > \epsilon_m > \epsilon_k$ (Figure~\ref{fig_examples_5}(b)). We may then braid $\tau_{\epsilon_k}\tau_{\epsilon_m} \leadsto \tau_{\tau_{\epsilon_k}(\epsilon_m)}\tau_{\epsilon_k}$, obtaining a new element of $[\omega_d]_b$ which contradicts our maximality assumption on the index $m$.

In general then, we see that $\epsilon_m$ intersects \emph{each} component of $A \cap \gamma_1$ (Figure~\ref{fig_examples_6} (a)), while $\tau^{-1}_{\epsilon_m}(\epsilon_k)$ intersects each such component at most once, and does not intersect the component $[z_r,z_{r+1}]_{\gamma_1}$ (Figure~\ref{fig_examples_6} (b)). As all intersections $\epsilon_k \cap \gamma_1$ were by construction contained in $A$, we conclude that $\#|\tau^{-1}_{\epsilon_m}(\epsilon_k) \cap \gamma_1| < \#|\epsilon_m \cap \gamma_1| <  \#|\epsilon_k \cap \gamma_1|$. But then braiding $\tau_{\epsilon_k}\tau_{\epsilon_m} \leadsto \tau_{\epsilon_m}\tau_{\tau^{-1}_{\epsilon_m}(\epsilon_k)}$, we have $\tau^{-1}_{\epsilon_m}(\epsilon_k) > \epsilon_m$, so that our new word is in $S$, and thus a contradiction of our minimality assumption on intersection with $\gamma_1$.

\begin{figure}[htb]
\centering
\includegraphics[scale=.9]{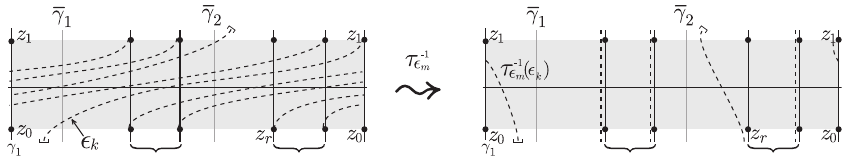}
\caption[]{}
\label{fig_examples_6}
\end{figure}

Then remaining case, that some $\epsilon_i$ is horizontal, follows from a similar argument. In particular, we now order our curves with respect to $\gamma_2$ and its endpoint $c'_2$, so $\epsilon \geq \epsilon' \Leftrightarrow \tau_{\epsilon}(\gamma_2) \geq \tau_{\epsilon'}(\gamma_2)$ from $c'_2$, and again assume $\omega_d$ is chosen from $[\omega_d]_b$ such that the index $m := max\{i \ | \ \epsilon_i \geq \epsilon_j \forall j \leq i\}$ is maximized. Now $x$ will refer to the initial point of $\tau_{\epsilon_m}(\gamma_2) \cap \bar{\gamma_1}$, and there is some $k>m$ such that $x$ is a vertex of a downward triangular region of the triple $(\epsilon_k,\epsilon_m,\bar{\gamma}_2)$. As $\epsilon_k \cap D$ contains no vertical component (else $\tau_{\epsilon_k}\epsilon_m$ is downward), the component extending from this triangle is upward on $c'_1$ (Figure~\ref{fig_examples_7}(a)). Note that, as Corollary \ref{cor_arcs} ensures that any horizontal curve is strictly greater than any non-horizontal curve, $\epsilon_m$ is horizontal, and the point of $\epsilon_m \cap \gamma_2$ closest to $c'_2$ is contained in an arc component $\sigma$ of $\epsilon_m \cap \pg'$ which connects $\partial_2$ and $\partial_3$, intersecting neither of the $\psi(\gamma_i)$.

\begin{figure}[htb]
\centering
\includegraphics[scale=.9]{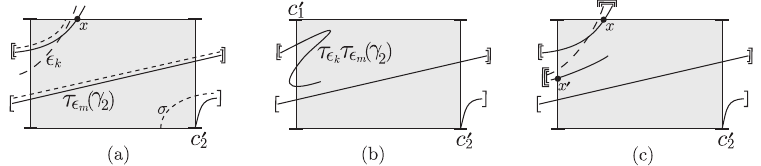}
\caption[]{The neighborhood $A$ of $\epsilon_m$, and the various arcs it intersects. Dark vertical lines are now segments of $\gamma_2$.}
\label{fig_examples_7}
\end{figure}

In particular, the segment $[c'_2,x]_{\tau_{\epsilon_m}(\gamma_2)}$ is isotopic to $[c'_2,x]_{\psi(\gamma_2)}$ up to its 2nd intersection with $\gamma_1$. Thus it again follows that we have a cancelling pair of bigons in $D_{\epsilon_k}(\tau_{\epsilon_m}(\gamma_2))$, else $\tau_{\epsilon_k}\tau_{\epsilon_m} (\gamma_2)$ is further to the right (from $c'_2$) than $\psi(\gamma_2)$ (Figure~\ref{fig_examples_7}(b)). We now let $x' \in \tau_{\epsilon_m}(\gamma_2) \cap \gamma_1$ be the point corresponding to a vertex of this bigon (Figure~\ref{fig_examples_7}(c)).

 We again consider the support neighborhood $A$ of $\tau_{\epsilon_m}$, and label $\gamma_2 \cap \partial A$ by $z_0,z_1,z_2,\ldots$, increasing along $\gamma_2$ from $c'_2$ (Figure~\ref{fig_examples_8}(a). Exactly as in the vertical case, our maximality condition on $m$ implies that $z_1 \in [x,x']_{\tau_{\epsilon_m}(\gamma_2)}$, but then $\#|\tau^{-1}_{\epsilon_m}(\epsilon_k) \cap \gamma_2| < \#|\epsilon_k \cap \gamma_2|$ (Figure~\ref{fig_examples_8}(b)), contradicting our minimality condition.

\begin{figure}[htb]
\centering
\includegraphics[scale=.9]{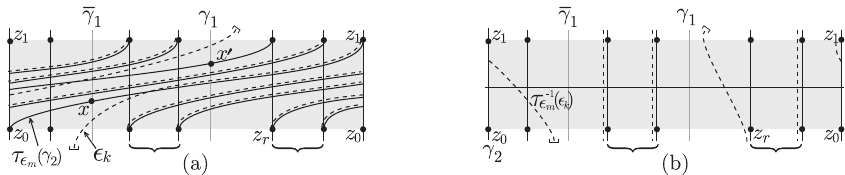}
\caption[]{}
\label{fig_examples_8}
\end{figure}

\end{proof}

We bring all of this together with:

\begin{proof}[Proof of Theorem \ref{thm_example}]

Suppose that $\mon_{e_1,e_2}$ is positive, so $\tau^{-1}_{\alpha}
\tau_{\delta_1} \tau_{\delta_2} \tau^{e_1}_{\beta_1} \tau^{e_2}_{\beta_2}$ has some positive factorization $\fac$.
Then letting $\mon' = \tau_\alpha \circ \mon_{e_1,e_2}$, we have  $\mon' = \tau_{\delta_1}\tau_{\delta_2}\tau^{e_1}_{\beta_1}\tau^{e_2}_{\beta_2} =
\tau_\alpha \fac$; i.e. $\alpha \in p.e.(\mon')$. At the same time, Lemma \ref{lem_no_diagonal} ensures that $[\tau_\alpha \fac]_b$ contains an element $\tau_{\epsilon_n} \cdots \tau_{\epsilon_1}$ with the property that for all $i$, $\epsilon_i$ is either disjoint from, or isotopic to, $\alpha$. As this property is preserved under braids, it holds for $\tau_\alpha \fac$ as well. It follows that, if $\gamma$ is a properly embedded arc which intersects $\alpha$ exactly once, then there are points $x, x' \in \mon'(\gamma) \cap \gamma$ such that $[x,x']_{\mon'(\gamma)} \cup [x,x']_{\gamma}$  is a simple closed curve isotopic to $\alpha$. The choice of $\gamma$ shown in Figure~\ref{fig_image} then gives a contradiction (we have drawn the image for the case $e_1=e_2=1$; the general case follows immediately).

\end{proof}

\begin{figure}[h!]
\centering \scalebox{.6}{\includegraphics{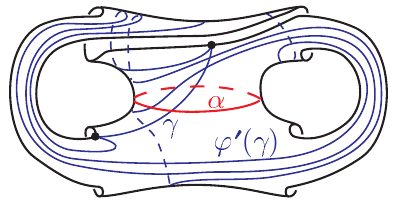}} \caption[Exclusion of $\alpha$]{The arc $\gamma$, and its image $\mon'(\gamma)$.} \label{fig_image}
\end{figure}

\end{document}